\theoremstyle{definition}
\newtheorem{definition}{\bf Definition}
\newtheorem{theorem}{Theorem}
\newtheorem{cor}{Corollary}
\newtheorem{prop}{Proposition}
\newtheorem{remark}{Remark}
\newtheorem{example}{Example}
\begin{document}

\def\R{\mathbb{R}}                   
\def\Z{\mathbb{Z}}                   
\def\Q{\mathbb{Q}}                   
\def\C{\mathbb{C}}                   
\def\N{\mathbb{N}}                   
\def\uhp{{\mathbb H}}                
\def\A{\mathbb{A}}

\def\P{\mathbb{P}}
\def\GHod{\text{GHod}}
\def\SHod{\text{SHod}}
\def\Hod{\text{Hod}}

\def\ker{{\rm ker}}              
\def\GL{{\rm GL}}                
\def\ker{{\rm ker}}              
\def\coker{{\rm coker}}          
\def\im{{\rm Im}}               
\def\coim{{\rm Coim}}            

\def\End{{\rm End}}              
\def\rank{{\rm rank}}                
\def\gcd{{\rm gcd}}                  

\begin{center}
{\LARGE\bf  Periods of Hodge cycles and special values of the Gauss' hypergeometric function
}
\\
\vspace{.25in} {\large {\sc Jorge Duque Franco}}\footnote{
Instituto de Matem\'atica Pura e Aplicada, IMPA, Estrada Dona Castorina, 110, 22460-320, Rio de Janeiro, RJ, Brazil,
{\tt georgy76@impa.br}}
\end{center}


\begin{abstract}
We compute periods of perturbations of a Fermat variety. This allows us to consider a subspace of the Hodge cycles defined by ``simple" arithmetic conditions. We explore some examples and give an upper bound for the dimension of this subspace. As an application, we find explicit expressions involving some Gauss' hypergeometric functions which are algebraic over the field of rational functions in one variable.
\end{abstract}

\section{Introduction}
\label{intro}
The present work is devoted to the study of hypersurfaces with hypergeometric periods. We focus on a particular class, Fermat varieties perturbed by $P(y)=y(1-y)(\lambda-y)$. Periods (roughly speaking multiple integrals) are an essential part of Hodge theory that have their deepest origins in elliptic and abelian integrals. We do not aim to verify the Hodge conjecture in our examples, rather we would like to analyze transcendental properties of integration over Hodge cycles.

Deligne in $1982$ explored periods  of algebraic cycles. He proved that up to some constant power of $2\pi\sqrt{-1}$, the periods of algebraic cycles are algebraic with respect to the field of definition of the variety (see \cite{deligne1982hodge}). This would be also true for Hodge cycles if the Hodge conjecture holds true. In fact, Deligne proved that this property is satisfied by periods of Hodge cycles in classic Fermat varieties even though Hodge conjecture is unknown in this case. With this, he obtained algebraic relations between the values of the $\Gamma$-function on rational points. This same idea was elaborated in $2006$ by Reiter and Movasati with the family 

\begin{equation*}
    M_t: \;\;\ f(x):=x_1^3+x_2^3+\dots+x_5^3-x_1-x_2=t,
\end{equation*}
to obtain algebraic relations of values of the hypergeometric functions (see \cite{movasati2006hypergeometric}). For example, they proved that

\begin{equation*}
    e^{-\frac{5}{6}\pi i}\frac{F\left(\frac{5}{6},\frac{1}{6},1;\frac{27}{16}t^2 \right)}{F\left(\frac{5}{6},\frac{1}{6},1;1-\frac{27}{16}t^2\right)}
\end{equation*}
belongs to $\Q(\zeta_3)$ for some $t\in\overline{\Q}$ if and only if

\begin{equation*}
    \pi^2\frac{F\left(\frac{5}{6},\frac{1}{6},1;\frac{27}{16}t^2 \right)}{\Gamma\left(\frac{1}{3}\right)^3}, \;\; \pi^2\frac{F\left(\frac{5}{6},\frac{1}{6},1;1-\frac{27}{16}t^2 \right)}{\Gamma\left(\frac{1}{3}\right)^3} \in\overline{\Q}.
\end{equation*}
The above is satisfied if $t$ is any root of the following equations

\begin{equation*}
    91125t^4-54000t^2+256, \;\;\; 81000t^4-48000t^2-1,
\end{equation*}
see \cite{movasati2006hypergeometric} and the references therein.

In this paper, we elaborate these same ideas with the family 


\begin{equation*}
     M_{\lambda}: \;\;\ f(x):=x_1^{m_1}+\dots+x_n^{m_n}+y(y-1)(y-\lambda)=0.
\end{equation*}
We compute its periods and use them to give some algebraic hypergeometric functions (see equation (\ref{eq:alg1708})). This result is framed in Schwarz' work. Schwarz in \cite{schwarz1873ueber} was the first to classify hypergeometric functions which are algebraic over $\C(z)$. A crucial idea was to relate the hypergeometric equation with the monodromy group. In the famous \textbf{Schwarz' list}, Schwarz determines explicit criteria for the parameters of the irreducible hypergeometric equations such that the solutions are algebraic. In the same work, Schwarz also obtained a similar but not so famous criterion in the case of reducible hypergeometric equations (see also \cite[\S 5]{kimura1969riemann}).

A more general question raised by Wolfart in \cite[Problem 6]{sariouglu2007problem} is to determine the transcendence degree of the field extension of $\overline{\C(z)}$ generated by the hypergeometric functions $F(a,b,c; z)$ where $a$, $b$, $c\in\Q$ with some fixed denominator. Or even better to determine a complete list of algebraic dependence equations among these $F(a,b,c;z)$ over the field $\overline{\C(z)}$. Examples of such relations are Propositions  \ref{prop:algtra0906} and \ref{prop:algtra11407}, Schwarz' list and Gauss’ relations between contiguous hypergeometric functions (see \cite{vidunas2003contiguous} and the references therein). Up to the author's knowledge, Wolfart's problem remains open and without significant progress.  

Throughout the paper, $n$ will be an even number. Let $g(x):=x_1^{m_1}+\dots +x_{n}^{m_n}$, $m_i \geq 2$, and let $P$ be a degree $m=m_{n+1}$ polynomial with $m\geq 2$. Consider
$$f=g(x)+P(y),$$
and let $F$ be its quasi-homogenization inside the weighted projective space $\P^{(1,v)}$ where $v_j=\frac{\text{lcm}(m_1,\ldots,m_{n+1})}{m_j}$ for $j=1,\ldots,n+1$. Let $X$ be a desingularization of the weighted hypersurface $D:=\{F=0\}\subset \P^{(1,v)}$. We are interested in Hodge cycles of $X$ supported in the affine part $U:=\{f=0\}$. For this, we consider a parametric family. Let 

$$T:=\left\{t=(t_0,\dots,t_m)\in\C^{m+1}\bigg|t_m=1,\;\;\Delta(P_t)\neq0\text{ where }P_t:=\sum_{i=0}^mt_iy^i\right\}$$
be the space of polynomials of degree $m$ with nonzero discriminant, and let 

$$\mathcal{U}:=\{(x,y,t)\in\C^n\times\C\times T|\;f_t(x,y):=g(x)+P_t(y)=0\}$$ be the family of affine varieties parameterized by $T$. Thus, the projection $\pi:\mathcal{U}\longrightarrow T$ is a locally trivial $C^{\infty}$ fibration (see \cite[\S 7.4]{Mov} and the references therein). We denote by $U_t:=\pi^{-1}(t)=\{f_t=0\}\subset\C^{n+1}$ and $X_t$ be a desingularization of $D_t:=\{F_t=0\}\subset\P^{(1,v)}$ where $F_t$ is the quasi-homogenization of $f_t$.

We say that a cycle $\delta_{t_0}\in H_n({U}_{t_0},\Q)$ is a \textbf{generic Hodge cycle} if all perturbations $\delta_t$ of it in the family $T$ are Hodge cycles (see Definition \ref{def:GHC0107}). This space is denoted by $\GHod_n(X_{t_0},\Q)_0$. We consider a subspace of the  generic Hodge cycles space by imposing certain conditions, which we call the space of \textbf{strong generic Hodge cycles} and we denote it by $\SHod_n(X_{t_0},\Q)_0$ (see Definition \ref{def:SGH0403}). These cycles are supported in $\mathcal{U}$ and they do not depend on the desingularization, because desingularization is done outside of $\mathcal{U}$. Essentially $\mathcal{U}$ is unaffected by the blow-up process.

Our main result is an upper bound for the dimension of the space of strong generic Hodge cycles in certain cases. 

\begin{theorem}
\label{theo:main2708}
Let $X$ be a desingularization of the weighted hypersurface $D$ given by the quasi-homogenization $F$ of $f=g(x)+P(y)$, where $g(x)=x_1^{m_1}+\dots +x_{n}^{m_n}$, $m_i \geq 2$ and $P$ is a polynomial of degree $m\geq 2$. 

\begin{enumerate}[i.]
    \item For $m_1=\dots=m_{n-1}=2$ and $m\geq7$

\begin{equation*}
 \dim \SHod_n(X,\Q)_0\leq\left\lbrace
\begin{array}{ll}
m-1 & m_n \textup{ even,}\\
0  & m_n \textup{ odd.}\\
\end{array}
\right.
\end{equation*}

\item For $m_1=\dots=m_{n-2}=2$, $m_{n-1}$ prime, $\gcd(m_{n-1},m_n)=1$ and $\frac{1}{m_{n-1}}+\frac{1}{m}<\frac{1}{2}$, we have $\SHod_n(X,\Q)_0= 0$.

\item For $m_j,$ $j=1,\dots,n,$  different prime numbers, we have $\SHod_n(X,\Q)_0= 0$.
\end{enumerate}
\end{theorem}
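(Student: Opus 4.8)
The plan is to bound $\SHod_n(X,\Q)_0$ by the dimension of a single eigenspace singled out by an arithmetic condition, using the period computation developed above together with the classical dichotomy for hypergeometric/Burau monodromy. Let $G:=\mu_{m_1}\times\cdots\times\mu_{m_n}$ act on $\mathcal U$ through the $x$-coordinates only; it commutes with the Gauss--Manin connection, hence decomposes the family $t\mapsto H_n(U_t,\Q)\otimes\C$ into eigen-local-systems $H_\beta$ indexed by characters $\beta=(\beta_1,\dots,\beta_n)$ with $1\le\beta_i\le m_i-1$. Projecting $(x,y)\mapsto y$ presents $U_t$ as a family of affine Fermat varieties over $\C$ degenerating (to cones) over the $m$ roots of $P_t$; Leray identifies $H_\beta$ with the first cohomology of $\P^1$ minus those $m$ roots and $\infty$, with coefficients in the rank-one local system whose local monodromy is $e^{2\pi i s(\beta)}$ around each root and $e^{2\pi i m s(\beta)}$ around $\infty$, where $s(\beta):=\langle\sum_i\beta_i/m_i\rangle$ is the fractional part. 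In particular $\dim H_\beta=m-1$, and as a local system over $T$ (the configuration space of the $m$ roots, $\infty$ fixed) $H_\beta$ is, up to a character twist, the reduced Burau representation of the braid group at $q=e^{2\pi i s(\beta)}$. The first step is to set up this dictionary precisely from the period formula.

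The second step is the reduction. A generic Hodge cycle, together with all its perturbations, spans a $\Q$-sub-local-system of $H_n(U_t,\Q)$, so $\SHod_n(X,\Q)_0$ is itself such a sub-local-system and, by definition, a variation of Hodge structure that is everywhere of pure type $(\tfrac n2,\tfrac n2)$. Its polarization form is then definite and its monodromy preserves an integral lattice, so $\SHod_n(X,\Q)_0$ has finite monodromy over $T$. Complexifying and using the commuting $G$-action, $\SHod_n(X,\Q)_0\otimes\C$ splits into pieces inside the various $H_\beta$, each a flat sub-local-system of $H_\beta$ with finite monodromy; hence
\[ \dim\SHod_n(X,\Q)_0\le\sum_{\beta}\dim\bigl(\text{largest finite-monodromy sub-local-system of }H_\beta\bigr). \]
Moreover the ``simple'' arithmetic conditions of Definition \ref{def:SGH0403} restrict which $\beta$ can contribute, so it remains to decide, for those $\beta$, whether $H_\beta$ carries a nonzero finite-monodromy sub-local-system.

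The third step, which is the crux, reads this off from the monodromy. When $s(\beta)=0$ the coefficient system is trivial and $H_\beta$ is the standard $(m-1)$-dimensional representation of $S_m$: finite. When $s(\beta)\neq0$, Schwarz's classification of algebraic (finite-monodromy) hypergeometric functions together with the Beukers--Heckman irreducibility criterion show that the reduced Burau representation at $q=e^{2\pi i s(\beta)}$ is irreducible with Zariski-dense (in particular infinite) image outside a short explicit list of parameters, so there $H_\beta$ has no nonzero proper sub-local-system and is not finite, hence contributes $0$. Each hypothesis of the theorem is exactly what keeps us off that list. In case (i) the bound $m\ge7$ makes the representation infinite and irreducible for every $\beta$ with $s(\beta)\neq0$; and $s(\beta)=0$ forces $\langle\tfrac12+\tfrac{\beta_n}{m_n}\rangle=0$ (using $n$ even and $m_1=\cdots=m_{n-1}=2$), i.e.\ $\tfrac{\beta_n}{m_n}=\tfrac12$, impossible for $m_n$ odd and realized only by $\beta=(1,\dots,1,m_n/2)$ for $m_n$ even; thus $\SHod_n(X,\Q)_0=0$ when $m_n$ is odd and $\dim\SHod_n(X,\Q)_0\le\dim H_{(1,\dots,1,m_n/2)}=m-1$ when $m_n$ is even. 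In case (ii), $\gcd(m_{n-1},m_n)=1$ with $m_{n-1}$ prime already forces $s(\beta)\neq0$ for all $\beta$, while $\tfrac1{m_{n-1}}+\tfrac1m<\tfrac12$ is precisely the hyperbolicity inequality $\tfrac1{m_{n-1}}+\tfrac1m+\tfrac12<1$ that makes the relevant triangle-type monodromy infinite and irreducible even for the small $m$ it allows; so no $\beta$ contributes and $\SHod_n(X,\Q)_0=0$. In case (iii), distinct primes $m_1,\dots,m_n$ force $s(\beta)\neq0$ for all $\beta$ (the numerator of $\sum_i\beta_i/m_i$ is a unit modulo each $m_i$) and their denominators keep the Burau parameter off the exceptional list, the few remaining small cases being checked directly; again $\SHod_n(X,\Q)_0=0$.

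The main obstacle is the third step: one must pin down, uniformly in $s(\beta)$ and in $m$, exactly which pairs lie on Schwarz's list of algebraic hypergeometric monodromy (and which make the reduced Burau representation reducible), and then verify that each numerical hypothesis excludes all of them except the harmless case $s(\beta)=0$. The thresholds $m\ge7$ in (i) and $\tfrac1{m_{n-1}}+\tfrac1m<\tfrac12$ in (ii) are consumed exactly here, as the points past which the braid-group representations become Zariski-dense; disposing of the genuinely small cases in (iii) by hand is a further, more delicate, piece of bookkeeping.
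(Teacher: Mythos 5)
Your proposal takes a completely different route from the paper, and the route has a genuine gap at its crux. The paper's proof is elementary: by Definition \ref{def:SGH0403}, $\SHod_n(X,\Q)_0$ is the image of the explicitly defined $\Q$-vector space $\mathcal{A}$, so the theorem reduces to computing $\dim\mathcal{A}$, i.e.\ the solution space of the linear system $\sum_{\alpha}n_{\alpha,k}\prod_j\zeta_{m_j}^{\alpha_j(\beta_j+1)}=0$ over $\Q$ for all $\beta'$ with $A_{\beta'}<\tfrac n2-\tfrac1m$. Since the coefficients are rational, each condition propagates to all Galois conjugates; encoding $(n_j)$ as a polynomial $Q_{(n_j)}$ one finds that $Q_{(n_j)}$ must be divisible by $\Phi_{m_n/e}$ for every divisor $e$ of $m_n$ with $e<m_n(\tfrac12-\tfrac1m)$ (resp.\ the analogous threshold in case (ii)), and counting with Euler's totient gives the stated bounds; case (iii) follows from the $\Q$-linear independence of $\{\prod_j\zeta_{m_j}^{\alpha_j}\}$. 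The thresholds $m\ge7$ and $\tfrac1{m_{n-1}}+\tfrac1m<\tfrac12$ arise purely from this divisor count (for $m\ge7$ the only surviving divisor is $d=2$), not from any monodromy-density phenomenon. Your proposal never engages with this definition: it instead tries to bound the a priori larger space of generic Hodge cycles by the maximal finite-monodromy sub-local-system of each character eigenspace $H_\beta$. That would suffice in principle (since $\SHod\subseteq\GHod$), but it converts an exercise in cyclotomic linear algebra into a hard classification problem.

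The gap is your third step. The objects $H_\beta$ are rank-$(m-1)$ representations of $\pi_1(T)$, where $T$ is the configuration space of the $m$ roots of $P_t$ — i.e.\ (twisted reduced Burau) representations of a braid group on a higher-dimensional base. Schwarz's list and the Beukers--Heckman criterion, which you invoke to conclude irreducibility and infinitude of the image off "a short explicit list", concern rank-$2$ and rank-$n$ hypergeometric local systems on $\P^1\setminus\{0,1,\infty\}$, i.e.\ representations of a free group on two generators; they do not apply to $H_\beta$. The correct statement would require the Deligne--Mostow integrality conditions or the classification of Burau images at roots of unity as complex reflection groups, and you neither state nor verify that the numerical hypotheses of the theorem exclude every exceptional parameter; indeed you explicitly defer "the genuinely small cases" as bookkeeping. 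This deferred verification is the entire content of your argument, and it is not a formality: for small $m$ the finite-monodromy sub-local-systems with $s(\beta)\neq0$ are genuinely nonzero (e.g.\ for $m=2$ the braid group is $\Z$ and every $H_\beta$ has finite monodromy), so the dichotomy you rely on is false without a precise, case-by-case delimitation that your proposal does not supply.
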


In fact, the proof of the previous theorem provides a method to calculate a set of generators of $\SHod_n(X,\Q)_0$ even if $m<7$ in the first case and if $\frac{1}{m_{n-1}}+\frac{1}{m}\geq\frac{1}{2}$ for the second case. Using this, we get:

\begin{cor}
\label{cor:dimpaper2808}
Let $X$ be a desingularization of the weighted hypersurface $D$ given by the quasi-homogenization $F$ of $f=g(x)+P(y)$, where $g(x)=x_1^{m_1}+\dots +x_{n}^{m_n}$, $m_i \geq 2$ and $P$ is a polynomial of degree $m$. 

\begin{enumerate}[i.]
    \item For $m_1=\dots=m_{n-1}=2$ and $m=2,\dots,6$

$$\dim\SHod_n(X,\Q)_0\leq(m-1)\left(\sum_{\substack{2\leq d\leq\lfloor\frac{2m}{m-2}\rfloor\\ d|m_n}}\varphi\left(d\right) \right),$$
where $\varphi$ is the Euler's totient function. When $m=2$ means that $2\leq d\leq m_n$ and $d|m_n$. Therefore for $m=2$, $\dim\SHod_n(X,\Q)_0\leq(m_n-1)$.

\item For $m_1=\dots=m_{n-2}=2$, $m_{n-1}$ prime, $\gcd(m_{n-1},m_n)=1$ and $\frac{1}{m_{n-1}}+\frac{1}{m}\geq\frac{1}{2}$, we have 

$$\dim\SHod_n(X,\Q)_0\leq(m-1)(m_{n-1}-1)\left(\sum_{\substack{2\leq d\leq\lfloor\frac{mm_{n-1}}{mm_{n-1}-m-m_{n-1}}\rfloor\\ d|m_n}}\varphi\left(d\right) \right),$$
where $\varphi$ is the Euler's totient function.
\end{enumerate}
\end{cor}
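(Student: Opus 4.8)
The plan is to deduce Corollary \ref{cor:dimpaper2808} directly from the machinery developed for Theorem \ref{theo:main2708} rather than proving anything genuinely new. The point is that the proof of the theorem does not really need the numerical hypotheses $m\geq 7$ (in case i) or $\frac{1}{m_{n-1}}+\frac{1}{m}<\frac12$ (in case ii); those hypotheses are only used at the very last step to conclude that a certain explicit finite set of candidate cycles is empty or spans a space of the claimed dimension. So first I would isolate, from the proof of Theorem \ref{theo:main2708}, the combinatorial description of a spanning set of $\SHod_n(X,\Q)_0$: each strong generic Hodge cycle corresponds (after the period computation, equation (\ref{eq:alg1708}) and the ``simple arithmetic conditions'' of Definition \ref{def:SGH0403}) to a tuple of residue data $(a_1,\dots,a_{n+1})$ with $a_j\in\{1,\dots,m_j-1\}$ subject to the Hodge-type condition $\sum_j \frac{a_j}{m_j}\in\Z$ for every Galois twist, together with the genericity condition forcing the $y$-part to vary holomorphically. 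Under the current hypotheses ($m_1=\dots=m_{n-1}=2$, resp. the mixed case) this reduces the count to the contribution coming only from the exponents attached to $x_n$ (and $x_{n-1}$ in case ii) and to $P(y)$.

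Next I would carry out the bookkeeping that replaces the ``vanishing'' conclusion of the theorem by an actual count when the inequality goes the other way. In case i, with $m_1=\dots=m_{n-1}=2$, the Hodge condition modulo the $\Q(\zeta)$-Galois action pins down the $y$-exponent to one of the $m-1$ admissible values $\{1,\dots,m-1\}$, and the $x_n$-exponent $a_n$ must satisfy the integrality/Galois constraint which, together with the condition that the corresponding differential form is of the right Hodge bidegree on \emph{every} fibre $X_t$, forces $\frac{a_n}{m_n}$ to have denominator $d\mid m_n$ with $d$ small: precisely $d\leq \lfloor\frac{2m}{m-2}\rfloor$, the bound coming from comparing the pole order of the Griffiths–Dwork form in the $y$-variable (degree $m$) against the middle Hodge filtration level, exactly as in the proof of Theorem \ref{theo:main2708}(i) but now keeping the inequality rather than discarding it. For each such $d$ there are $\varphi(d)$ primitive residues $a_n$ with $\gcd(a_n,m_n)=m_n/d$, and the $y$-data contributes the factor $m-1$ independently, giving the product $(m-1)\sum_{2\leq d\leq\lfloor 2m/(m-2)\rfloor,\ d\mid m_n}\varphi(d)$; the $m=2$ degeneration is just the observation that $\lfloor 2m/(m-2)\rfloor=\infty$ there, so the sum is over all $d\mid m_n$ with $d\geq 2$, i.e. $\sum_{d\mid m_n,d\geq 2}\varphi(d)=m_n-1$. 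Case ii is the same argument with the extra prime $m_{n-1}$ contributing an independent factor $m_{n-1}-1$ (its $\varphi$-sum collapses because $m_{n-1}$ is prime) and the pole-order bound becoming $d\leq\lfloor \frac{mm_{n-1}}{mm_{n-1}-m-m_{n-1}}\rfloor$, which is exactly the reciprocal of $1-\frac{1}{m}-\frac{1}{m_{n-1}}$ rounded down.

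The one subtlety I would be careful about is that an \emph{upper} bound on the dimension only requires exhibiting a spanning set, so I do not need to check that every candidate tuple actually gives a nonzero (let alone linearly independent) strong generic Hodge cycle — this is why the statement is an inequality and why no irreducibility or non-degeneracy of the period matrix is needed here. Concretely the argument is: (a) every element of $\SHod_n(X,\Q)_0$ is, by Definition \ref{def:SGH0403} and the period formula, a $\Q$-linear combination of the ``monomial'' cycles indexed by admissible tuples; (b) the genericity requirement (all perturbations $\delta_t$ remain Hodge) forces the pole-order/Hodge-level inequality that bounds the denominator $d$ of the $x_n$-residue as above; (c) hence the number of admissible tuples is at most the displayed product, which bounds $\dim\SHod_n(X,\Q)_0$. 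I expect the main obstacle to be purely expository: making precise which part of the Theorem \ref{theo:main2708} proof is being re-used and verifying that the floor function $\lfloor 2m/(m-2)\rfloor$ (resp. $\lfloor mm_{n-1}/(mm_{n-1}-m-m_{n-1})\rfloor$) is exactly the threshold that appears when one does \emph{not} assume $m\geq 7$ — i.e. checking that for $m=2,\dots,6$ this floor is finite and produces the stated sum, and that the $m=2$ case indeed degenerates to the divisor sum $m_n-1$. None of this requires new ideas beyond those already in the proof of the theorem; it is a matter of retaining a term that was previously shown to vanish.
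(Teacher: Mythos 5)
Your proposal is correct and takes essentially the same route as the paper: the paper also just re-runs the proof of Theorem \ref{theo:main2708} without the numerical hypothesis, keeps the exact value $\dim\mathcal{A}_{m_n,m}=m_n-1-\sum_{e\in S_{m_n,m}}\varphi(m_n/e)$ obtained from the cyclotomic-polynomial divisibility, and converts it via $\sum_{e\mid m_n}\varphi(m_n/e)=m_n$ and the substitution $d=m_n/e$ into the displayed sum over $2\le d\le\lfloor 2m/(m-2)\rfloor$ (resp.\ the analogous threshold for $\mathcal{B}_{m_{n-1},m_n,m}$, with the extra factor $m_{n-1}-1$), the bound on $\SHod_n(X,\Q)_0$ being immediate since it is by definition the image of $\mathcal{A}$. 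The one inaccuracy is your passing description of the admissible data as tuples satisfying $\sum_j a_j/m_j\in\Z$ for every Galois twist — the defining condition of $\mathcal{A}$ is the vanishing of the exponential sums for all $\beta'$ with $A_{\beta'}<\tfrac n2-\tfrac1m$, not an integrality condition on a single tuple — but this is only a misphrasing of the setup; your operative count (surviving characters are exactly those of order $d\mid m_n$ with $d\le\lfloor 2m/(m-2)\rfloor$, times $m-1$ from the $y$-data) is the complementary form of the paper's computation and gives the same bound.
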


We obtain algebraic hypergeometric functions by a different method than that used by Schwarz. 
For this, we restrict ourselves to the case  $P_{\lambda}(y)=y(1-y)(\lambda-y)$, and we compute the periods on explicit strong generic Hodge cycles. For example, we get 

\begin{equation}
\label{eq:alg1708}
    F\left(\frac{5}{6}, \frac{1}{6},\frac{5}{3};1-\lambda \right),\;\;\; F\left(\frac{7}{6}, \frac{-1}{6},\frac{7}{3};1-\lambda \right)\in\overline{\Q(\lambda)}.
\end{equation}        
The above is somewhat exceptional given that periods are usually transcendental. Other by-products of this work are examples of non-algebraic hypergeometric functions that satisfy algebraic relations between them. 
    
\begin{prop}
\label{prop:algtra0906}
The following expressions are in $\overline{\Q(\lambda)}:$

\begin{equation}
\label{eq:algHF1808}
    0\neq 6F\left(\frac{4}{3},-\frac{4}{3},\frac{8}{3};1-\lambda\right)(\lambda^2-\lambda+1)-\frac{2}{3}
F\left(\frac{4}{3},-\frac{1}{3},\frac{8}{3};1-\lambda\right)\left(\lambda+1 \right)(5\lambda^2-8\lambda+5),
\end{equation}

\begin{equation}
\label{eq:algHF18082}
        0\neq 2F\left(\frac{2}{3},-\frac{2}{3},\frac{4}{3};1-\lambda\right)-\frac{2}{3}
    F\left(\frac{2}{3},\frac{1}{3},\frac{4}{3};1-\lambda\right)\left(\lambda+1 \right),
\end{equation}

\begin{equation}
\label{eq:algHF18083}
\begin{split}
0\neq &4F\left(\frac{2}{3},-\frac{5}{3},\frac{4}{3};1-\lambda\right)(\lambda^2-\lambda+1)-
\frac{1}{3}F\left(\frac{2}{3},-\frac{2}{3},\frac{4}{3};1-\lambda\right)\left(\lambda+1 \right)(8\lambda^2-11\lambda+8)+\\
&F\left(\frac{2}{3},\frac{1}{3},\frac{4}{3};1-\lambda\right)\lambda(1-\lambda)^2,
\end{split}
\end{equation}

\begin{equation}
\label{eq:algHF18084}
\begin{split}
0\neq &6F\left(\frac{2}{3},-\frac{8}{3},\frac{4}{3};1-\lambda\right)(\lambda^2-\lambda+1)-
\frac{2}{3}F\left(\frac{2}{3},-\frac{5}{3},\frac{4}{3};1-\lambda\right)\left(\lambda+1 \right)(7\lambda^2-10\lambda+7)+\\
&2F\left(\frac{2}{3},-\frac{2}{3},\frac{4}{3};1-\lambda\right)\lambda(1-\lambda)^2,
\end{split}
\end{equation}
but each hypergeometric function in the expressions above is not algebraic over $\Q(\lambda)$. For a numerical verification of this proposition see \S \ref{sec:compu2408}.
\end{prop}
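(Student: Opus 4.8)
The plan is to obtain each displayed identity as the period of an explicitly described strong generic Hodge cycle against a suitable $n$-form, and then to invoke the Deligne-type algebraicity of such periods. First I would specialize the cycle construction from the proof of Theorem~\ref{theo:main2708} to $m=3$, $P_\lambda(y)=y(1-y)(\lambda-y)$, and to the small triples $(m_1,\dots,m_n)$ for which $\SHod_n(X,\Q)_0\neq 0$ (as in Corollary~\ref{cor:dimpaper2808}, e.g.\ $m_1=\dots=m_{n-1}=2$ with $m_n\in\{3,6\}$). This produces generators $\delta_\lambda\in H_n(U_\lambda,\Q)$ of $\SHod_n(X,\Q)_0$ built as joins of the standard Fermat vanishing cycles in the $x_i$ with a path in the $y$-line joining two roots of $P_\lambda$; choosing the path between the two $\lambda$-independent roots $0$ and $1$ is what makes $1-\lambda$ the argument of the resulting hypergeometric functions.

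Second, I would evaluate $\int_{\delta_\lambda}\omega$ for the relevant monomial forms $\omega=x^{\alpha}y^{\beta}\,\frac{dx\wedge dy}{df}$ by the Griffiths--Pham/de Rham factorization of periods on Brieskorn--Pham hypersurfaces: the $x$-integrals produce Beta-function constants independent of $\lambda$, while the $y$-integral is an Euler-type integral $\int_0^1 y^{p}(1-y)^{q}(\lambda-y)^{r}\,dy$, i.e.\ a Gauss hypergeometric function $F(a,b,c;1-\lambda)$ whose rational parameters are read off from $\alpha,\beta$ and the $m_i$. Incrementing $\beta$ shifts these parameters by integers, so Gauss' contiguous relations express the higher members in terms of two base functions with coefficients that are polynomials in $\lambda$; this is the source of the factors $\lambda^2-\lambda+1$, $(\lambda+1)(5\lambda^2-8\lambda+5)$, etc.\ in \eqref{eq:algHF1808}--\eqref{eq:algHF18084}. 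Carrying out this bookkeeping precisely, together with a separate verification that the resulting combination is not identically zero (most easily via its leading behaviour as $\lambda\to 1$ or $\lambda\to 0$), is the step I expect to be the most delicate.

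Third, the membership in $\overline{\Q(\lambda)}$ follows because $\delta_\lambda$ is a Hodge cycle for every $\lambda$: by the same Deligne-type algebraicity principle for periods of Hodge cycles on Fermat-type motives that underlies \eqref{eq:alg1708}, the period $\int_{\delta_\lambda}\omega$, after division by $(2\pi\sqrt{-1})^{n/2}$ and by the explicit product of Beta constants, lies in $\overline{\Q(\lambda)}$ — the Beta-constant$/(2\pi\sqrt{-1})^{n/2}$ factor being algebraic thanks to the relevant $\Gamma$-relation available in this configuration (where many exponents equal $\tfrac12$). Hence the combination of hypergeometric functions, being that period up to a nonzero algebraic factor, is algebraic over $\Q(\lambda)$.

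Finally, for the claim that no individual $F$ in \eqref{eq:algHF1808}--\eqref{eq:algHF18084} is algebraic over $\Q(\lambda)$: since algebraicity over $\Q(\lambda)$ forces algebraicity over $\C(z)$ with $z=1-\lambda$, it suffices to show each such $F(a,b,c;z)$ is transcendental over $\C(z)$. Every parameter triple occurring here (of the form $(\tfrac23,b,\tfrac43)$ or $(\tfrac43,b,\tfrac83)$ with $b$ a negative non-integer rational) gives a reducible hypergeometric equation, so I would apply Schwarz' criterion for the reducible case recorded in \cite[\S 5]{kimura1969riemann} — equivalently, exhibit an element of infinite order in the projective monodromy group — to conclude that none of these triples yields an algebraic solution. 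The only genuine subtlety here is to use the reducible-case classification rather than the classical Schwarz list; the numerical cross-check is indicated in \S\ref{sec:compu2408}.
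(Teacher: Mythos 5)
There is a genuine gap in the second and third steps, and it is exactly at the point where the content of this proposition lives. You propose to integrate pole-order-one monomial forms $\omega=x^\alpha y^\beta\,\frac{dx\wedge dy}{df}$ and to produce the polynomial coefficients $(\lambda^2-\lambda+1)$, $(\lambda+1)(5\lambda^2-8\lambda+5)$, etc.\ afterwards via Gauss' contiguous relations. The paper does something different and the difference is essential: it works with $g=x_1^2+x_2^6$ ($n=2$) and integrates the \emph{pole-order-two} forms $\omega_\beta/f^2$ with $\beta=(0,4,0)$ and $\beta=(0,0,\beta_3)$, $\beta_3=0,1,2$. For these $\beta$ the form $\omega_\beta/f$ is \emph{not} a good form (Definition \ref{def:goodform}), i.e.\ it does not come from a class in $H^2_{dR}(X)$, so Proposition \ref{prop:Deligne0109} cannot be applied to its period; only $\omega_\beta/f^2$ extends to $X$. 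The linear combinations in \eqref{eq:algHF1808}--\eqref{eq:algHF18084} are then the periods of these single pole-order-two forms, and the coefficients come from the Griffiths--Dwork pole reduction \eqref{reduction} through the discriminant polynomials $Q_1,Q_2$ of Example \ref{ex:discr1908} (this is Proposition \ref{prop:alg22206}), not from contiguity. Your route, taken literally, would apply Deligne's algebraicity to the period of each individual $\omega_\beta/f$, which would ``prove'' that each individual $F$ is algebraic over $\Q(\lambda)$ --- contradicting the second half of the statement. The fact that the combination is algebraic while its constituents are not is precisely the phenomenon that $\omega_\beta/f$ fails to be a good form while $\omega_\beta/f^2$ is one; any correct proof has to isolate that.

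Two further points. First, your identification of the path is reversed: by Proposition \ref{prop:compint2006}, the cycle $\delta_0=[1]-[0]$ joining the $\lambda$-independent roots $0$ and $1$ produces hypergeometric functions in the variable $\frac{1}{\lambda}$ (these give Proposition \ref{prop:algtra11407}), whereas the argument $1-\lambda$ in the present proposition comes from $\delta_1=[\lambda]-[1]$, i.e.\ the path joining $1$ to the moving root $\lambda$. Second, the membership in $\overline{\Q(\lambda)}$ cannot be obtained from a ``Deligne-type principle for Hodge cycles'' on the perturbed variety: Deligne's theorem for Hodge cycles is only available for the classical Fermat variety, and for $f=x_1^2+x_2^6+P_\lambda(y)$ the paper instead uses that $X$ is a surface, so Lefschetz $(1,1)$ makes the strong generic Hodge cycle $\delta^1=\sum_{j=0}^{4}n_j\,\delta_1*\delta_j^{-1}$ (with $\sum_j n_j\zeta_6^{5j}\neq 0$) algebraic, after which Proposition \ref{prop:Deligne0109} applies. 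Your treatment of the non-algebraicity of the individual $F$'s via the reducible-case Schwarz/Kimura criterion is essentially the paper's argument (it cites the angular-parameter criterion of \cite[Theorem 12.17(c)]{zolkadek2006monodromy}), and that part is fine.
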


We can find the algebraic functions of the expressions in Proposition \ref{prop:algtra0906} using hypergeometric theory via Gauss' relations, see Remark \ref{rem:1610}. Proposition \ref{prop:algtra0906} suggests that the Hodge cycles in Theorem \ref{theo:main2708} and Corollary \ref{cor:dimpaper2808} should be absolute in the sense of Deligne, see \cite[\S 2]{deligne1982hodge}. Moreover, the algebraic functions in Proposition \ref{prop:algtra0906} might be used in order to construct the underlying algebraic cycles  explicitly, see \cite{movasati2020reconstructing}.

Similarly to Schwarz' work, Beukers and Heckman classified the generalised hypergeometric functions which are algebraic over $\C(z)$ in \cite{beukers1989monodromy}. On the other hand, meantime this article was being written, Movasati was able to obtain similar algebraicity properties of periods which are gathered in \cite[\S 16.9]{Mov}. Apparently these periods must be related in some way to the generalised hypergeometric functions described in \cite{beukers1989monodromy}, for instance via a pull-back. For the classification scheme of pull-back transformations between Gauss hypergeometric differential equations see \cite{vidunas2009algebraic}. 
\bigskip


\noindent\textbf{Acknowledgements.} 
I am deeply grateful to my advisor Hossein Movasati for his reading, suggestions and several useful conversations. I thank Stefan Reiter and Michael Dettweiler for hosting me at the University of Bayreuth and for providing such a stimulating environment to work. Furthermore, I would like to thank Roberto Villaflor for his helpful discussions, his comments and suggestions on the first version of this article. Funding was provided by CNPq (Grant No. 140607/2017-0).    

\section{Hodge cycles}
\label{sec2}
Throughout the text we will use $x_{n+1}$ and $y$ interchangeably. Let $f(x,y)=g(x)+P(y)$ be the polynomial given by $g(x)=x_1^{m_1}+\dots +x_{n}^{m_n}$ and a polynomial $P(y)$ of degree $m=m_{n+1}$ with non-zero discriminant. When we look at the usual compactification in the projective space of $U=\{f=0\}$ is usually not smooth. Thus, we will consider the compactification in the weighted projective space $\P^{(1,v)}$ with $v_j=\frac{\text{lcm}(m_1,\ldots,m_{n+1})}{m_j}$. In this case Steenbrink \cite[\S 4]{steenbrink1977intersection} describes how to construct an explicit basis for the cohomology of a given weighted hypersurface. This is just the generalization of the homogeneous smooth case given by Griffiths in \cite{griffiths1969periods} 
We use this explicit basis to state our definition of Hodge cycles.

Let $M$ be a smooth projective variety and $Y$ be a smooth hyperplane section of $M$. Writing the long exact sequence of the pair $(M,V)$, where $V=M\backslash Y$, and using the Thom-Leray isomorphism we have

$$\dots\rightarrow H_{n-1}(Y,\Z)\stackrel{\sigma}{\rightarrow} H_{n}(V,\Z)  \stackrel{i}{\rightarrow}H_{n}(M,\Z)\stackrel{\tau}{\rightarrow} H_{n-2}(Y,\Z)\rightarrow \cdots,$$
where the map $\tau$ is the intersection with $Y$. An element $\delta\in H_n(V,\Q)$ is called a \textbf{cycle at infinity} if $\delta\in Ker(H_n(V,\Q)\stackrel{i}{\rightarrow}H_n(M,\Q))$. We denote

\begin{equation*}
\begin{split}
    H_n(V,\Q)_{\infty}&:=Ker(H_n(V,\Q)\stackrel{i}{\rightarrow}H_n(M,\Q))\\
    &\cong Im(H_{n-1}(Y,\Z)\stackrel{\sigma}{\rightarrow}H_n(V,\Q)).
\end{split}    
\end{equation*}
We denote the primitive homology (dual to the primitive cohomology, see \cite[\S 5.7]{Mov}) by

\begin{equation*}
\begin{split}
    H_n(M,\Q)_{0}&:=Ker(H_n(M,\Q)\stackrel{\tau}{\rightarrow}H_{n-2}(Y,\Q)).
\end{split}    
\end{equation*}
Thus, we have

\begin{equation}
    \label{eq:primitive1503}
    \begin{split}
        H_n(M,\Q)_0&\cong\frac{H_n(V,\Q)}{Ker(H_n(V,\Q)\stackrel{i}{\rightarrow}H_n(M,\Q))}\\
        &=\frac{H_n(V,\Q)}{H_n(V,\Q)_\infty}.
    \end{split}
\end{equation}
On the other hand, the Hodge decomposition determines the Hodge filtration: $0=F^{n+1}\subset F^n\subset\dots\subset F^1\subset F^0=H_{dR}^n(M)$ with $F^k=F^kH_{dR}^n(M):=H^{n,0}+H^{n-1,1}+\dots+H^{k,n-k}$ where $H^{k,n-k}:=H^{k,n-k}(M)$, which allows us to define Hodge cycles. A cycle $\delta\in H_n(M,\Q)$ is called a \textbf{Hodge cycle} if

$$\int_{\delta}F^{\frac{n}{2}+1}=0.$$
We denote by $\Hod_n(M,\Q)$ the space of Hodge cycles in $H_n(M,\Q)$. Now, by \cite[Proposition 5.10]{Mov} and equation \eqref{eq:primitive1503} we have   

\begin{equation}
\label{eq:defprim1503}
\Hod_n(M,\Q)_0:=\Hod_n(M,\Q)\cap H_n(M,\Q)_0\cong\frac{\left\{\delta\in H_n(V,\Q)| \int_{\delta}F_0^{\frac{n}{2}+1}=0\right\}}{\left\{\delta\in H_n(V,\Q)| \int_{\delta}F_0^0=0 \right\}},    
\end{equation}
where $F_0^k=F^k\cap H_{dR}^{n}(M)_0$ is the corresponding Hodge filtration of the primitive cohomology. With this in mind, let us return to our case of interest. Let $F$ be the quasi-homogenization of $f$ given by

$$F(x_0,\dots,x_{n+1})=x_{0}^df\left(\frac{x_1}{x_{0}^{v_1}},\dots,\frac{x_{n+1}}{x_{0}^{v_{n+1}}} \right),$$
where $d:=\text{lcm}(m_1,\dots,m_{n+1}),$ $v_j=\frac{d}{m_j}$, $f(x,y)=g(x)+P(y)$ is the polynomial given by $g(x)=x_1^{m_1}+\dots +x_{n}^{m_n}$ and $P(y)$ is a polynomial of degree $m=m_{n+1}$ with non-zero discriminant. Thus $F$ is quasi-homogeneous in $\P^{(1,v)}$ with $v=(v_1,\dots,v_{n+1})$, and so it defines a weighted hypersurface $D$. We have

$$U:=\{f=0\}\subset D:=\{F=0\}\subset \P^{(1,v)}.$$ 

\begin{definition}
\label{def:Hodgecycle2708}
Let $X$ be a desingularization of the weighted hypersurface $D$ given by the quasi-homogenization $F$ of $f=g(x)+P(y)$. We define the space of primitive Hodge cycles as 

$$\Hod_n(X,\Q)_0:=\frac{\left\{\delta\in H_n(U,\Q)| \int_{\delta}res\left(\frac{\omega_\beta}{f^j}\right)=0, A_\beta<j, 1\leq j\leq\frac{n}{2} \right\}}{\left\{\delta\in H_n(U,\Q)| \int_{\delta}res\left(\frac{\omega_\beta}{f^j}\right)=0, A_\beta<j, 1\leq j\leq n+1 \right\}},$$
with $A_\beta=\sum_{j=1}^{n+1}(\beta_j+1)\frac{v_j}{d}$ and 
 $$\omega_\beta=x^\beta dx:=x_1^{\beta_1}\cdots x_{n+1}^{\beta_{n+1}}dx_1\wedge\dots\wedge dx_{n+1}.$$
\end{definition}

If $\P^{(1,v)}=\P^{n+1}$, it follows by \cite{steenbrink1977intersection} that this definition coincides with the classical definition of Hodge cycles (see equation \eqref{eq:defprim1503}). For instance, if $X$ is defined by $f=x_1^d+\dots+x_n^d+x_{n+1}^d+1$.

\section{Integration over joint cycles}
\label{sec3}
In this section we explain how to calculate periods on cycles in affine varieties. For a more detailed description,
the reader is referred to \cite{Mov,arnold1988singularities}.         

\subsection{Multiple Integrals for Fermat varieties}
\label{sec:fermat}

Let $m_1,m_2,\dots,m_{n}$ be integers bigger than one and consider the $(n-1)$-th affine Fermat variety

$$L_b:=\left\{x\in\C^{n}\Big| g(x)=b \right\} \subset\C^{n},$$
where  $g=x_1^{m_1}+\dots+x_{n}^{m_{n}},$ and $b\neq 0$. We denote $L:=L_1$. Let

$$\Delta^{n-1}:=\left\{(t_1,\cdots,t_{n})\in\R^{n}\Big|\;t_j\geq0,\; \sum_{j=1}^{n}t_j=1 \right\},$$
be the standard $(n-1)$-simplex and let $\zeta_{m_j}=e^{\frac{2\pi\sqrt{-1}}{m_j}}$ be an $m_j$-th primitive root of unity. For $\alpha\in J=I_{m_1}\times I_{m_2}\times \dots \times I_{m_{n}}$ with $I_m=\{0,\dots,m-2\}$ and $a\in\{0,1\}^{n}$, consider 

\begin{equation*}
 \begin{array}{cccc}
     \Delta_{\alpha+a}:&\Delta^{n-1}  & \longmapsto & L \\
     & (t_1,\dots,t_{n}) & \longmapsto & \left(t_1^{\frac{1}{m_1}}\zeta_{m_1}^{\alpha_1+a_1},\dots,t_{n}^{\frac{1}{m_{n}}}\zeta_{m_{n}}^{\alpha_{n}+a_{n}}\right).
\end{array}
\end{equation*}
The formal sum

$$\delta_{\alpha}:= \sum_{a}^{}(-1)^{\sum{_{i=1}^{n}}(1-a_i)} \Delta_{\alpha+a}$$
induces a non-zero element in $H_{n-1}(L,\Z).$ In fact

\begin{prop}
\label{prop:basis2608}
The cycles $\{\delta_{\alpha}^b\}_{\alpha\in J} $ are a basis for the $\Z$-module $H_{n-1}(L_b,\Z),$ with $\delta_{\alpha}^{b}=(\phi_b)_*(\delta_\alpha)$ and $\phi_b:L\rightarrow L_b$ is the biholomorphism given by 
$$\phi(x_1,\cdots,x_{n})=(b^{1/m_1}x_1,\cdots,b^{1/m_{n}}x_{n}),$$ 
where $b^{1/{m_j}}$ is a fixed $m_j$-th root of $b$.   
\end{prop}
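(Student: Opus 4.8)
The plan is to prove Proposition~\ref{prop:basis2608} by first establishing the case $b=1$ and then transporting the result along the biholomorphism $\phi_b$. Since $\phi_b\colon L\to L_b$ is a biholomorphism (its inverse is given by dividing each coordinate by the fixed root $b^{1/m_j}$), the induced map $(\phi_b)_*\colon H_{n-1}(L,\Z)\to H_{n-1}(L_b,\Z)$ is an isomorphism of $\Z$-modules. Hence it suffices to show that $\{\delta_\alpha\}_{\alpha\in J}$ is a $\Z$-basis of $H_{n-1}(L,\Z)$, and then $\{\delta_\alpha^b=(\phi_b)_*\delta_\alpha\}_{\alpha\in J}$ is automatically a basis of $H_{n-1}(L_b,\Z)$.

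First I would record the homotopy type of the affine Fermat variety $L=\{x_1^{m_1}+\dots+x_n^{m_n}=1\}$. The standard fact (going back to Pham, and treated in detail in \cite{arnold1988singularities} and \cite[\S 7]{Mov}) is that $L$ is homotopy equivalent to the join of $n$ finite sets, the $j$-th having $m_j$ points, namely the join of the fibers $\mu_{m_j}=\{\zeta_{m_j}^0,\dots,\zeta_{m_j}^{m_j-1}\}$ of the one-variable maps $x_j\mapsto x_j^{m_j}$. Such a join is a wedge of $\prod_{j=1}^n(m_j-1)$ spheres of dimension $n-1$, so $H_{n-1}(L,\Z)\cong\Z^{\prod(m_j-1)}$ is free of rank $|J|=\prod_{j=1}^n(m_j-1)$, and all other reduced homology vanishes. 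This already matches the cardinality of the index set $J=I_{m_1}\times\dots\times I_{m_n}$ with $|I_{m_j}|=m_j-1$, so the counting is correct; what remains is to identify the explicit cycles $\delta_\alpha$ with a basis.

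Next I would show the $\delta_\alpha$ are the images of the standard generators under the join description. The map $\Delta_{\alpha+a}$ sends the simplex into $L$ via coordinatewise $m_j$-th roots with a choice of root of unity $\zeta_{m_j}^{\alpha_j+a_j}$; the alternating sum over $a\in\{0,1\}^n$ is precisely the ``boundary-type'' combination that, in the $j$-th factor, takes the difference $[\zeta_{m_j}^{\alpha_j+1}]-[\zeta_{m_j}^{\alpha_j}]$ of two points of $\mu_{m_j}$. Since the differences $\{[\zeta_{m_j}^{\alpha_j+1}]-[\zeta_{m_j}^{\alpha_j}]\}_{\alpha_j=0,\dots,m_j-2}$ form a basis of the reduced homology $\widetilde H_0(\mu_{m_j},\Z)\cong\Z^{m_j-1}$, and since under the join the $(n-1)$-st homology is the tensor product $\bigotimes_{j=1}^n\widetilde H_0(\mu_{m_j},\Z)$ (by the Künneth-type formula for joins, $\widetilde H_{*}(A*B)\cong\widetilde H_{*-1}(A)\otimes\widetilde H_{*-1}(B)$ plus a Tor term that vanishes here because everything is free), the products $\delta_\alpha$ corresponding to $\alpha\in J$ form a $\Z$-basis of $H_{n-1}(L,\Z)$. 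Finally I would note $\delta_\alpha\neq 0$ follows from this, confirming the claim preceding the proposition.

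The main obstacle is making the identification in the previous paragraph rigorous rather than heuristic: one must carefully set up the deformation retraction of $L$ onto the join $\mu_{m_1}*\dots*\mu_{m_n}$ (or equivalently use the explicit cell/simplicial structure coming from the $\Delta_{\alpha+a}$), check the orientation/sign conventions so that the alternating sum $\sum_a(-1)^{\sum(1-a_i)}\Delta_{\alpha+a}$ is genuinely a cycle (its boundary vanishes because adjacent faces cancel in pairs), and verify that the resulting classes correspond exactly to the tensor-product basis rather than some other spanning set. All of this is standard Pham–Lefschetz theory for Brieskorn–Pham singularities, so I would cite \cite{arnold1988singularities} and \cite[\S 7.3--7.4]{Mov} for the retraction and the join computation, and limit the written proof to: (1) reduce to $b=1$ via $\phi_b$; (2) quote the join homotopy equivalence and the rank count; (3) check $\delta_\alpha$ is a cycle and matches the tensor basis; (4) conclude.
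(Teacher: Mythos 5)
Your proposal is correct and follows essentially the same route as the paper, whose ``proof'' is simply a citation to \cite[\S 7.9, Remark 7.1]{Mov} and \cite[\S 2.9]{arnold1988singularities}: those references establish exactly the Pham decomposition you describe, namely the deformation retraction of the affine Fermat variety onto the join $\mu_{m_1}*\dots*\mu_{m_n}$ and the identification of the alternating sums $\delta_\alpha$ with the tensor-product basis $\bigotimes_j\bigl([\zeta_{m_j}^{\alpha_j+1}]-[\zeta_{m_j}^{\alpha_j}]\bigr)$ of $\widetilde H_{n-1}$. Your reduction to $b=1$ via the biholomorphism $\phi_b$ and the rank count via the join formula are both sound, so the write-up amounts to an expansion of the argument the paper delegates to its references.
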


\begin{proof}
See \cite[\S 7.9, Remark 7.1]{Mov} or \cite[$\S$2.9]{arnold1988singularities}
\end{proof}

The following proposition was first done by Deligne in \cite[Proposition 7.13 ]{deligne1982hodge} 
for the classical Fermat variety with $m_1=\dots=m_{n}.$ The general case is due to Movasati \cite[Proposition 15.1]{Mov}.

 \begin{prop}
\label{pro:fermat}
Let $g=x_1^{m_1}+\dots +x_{n}^{m_{n}}$, then

$$\int_{\Delta_{\alpha+a}}res\left(\frac{x^{\beta'}dx'}{g}\right)= \Lambda  B\left(\frac{\beta_1+1}{m_1},\dots,\frac{\beta _n+1}{m_n}\right),$$
where $x=(x',y)$, $dx=dx'\wedge dy=dx_1\wedge\dots\wedge dx_{n}\wedge dy$, $\beta=(\beta',\beta_{n+1})=(\beta_1,\dots,\beta_n,\beta_{n+1})$, $x^{\beta'}=x_1^{\beta_1}x_2^{\beta_2}\dots x_n^{\beta_n}=x'^{\beta'}$, $\Lambda$ is given by

$$\Lambda=(-1)^{n-1} \left( \prod_{j=1}^{n}\frac{1}{m_j}\right)  \left( \prod_{j=1}^{n}\zeta_{m_j}^{(\beta_j+1)(\alpha_j+a_j)}  \right),$$
and $B(b_1,\dots,b_n)$ is the multi parameter version of the beta function given by

\begin{equation*}
\label{nom:beta}
\begin{split}
B(b_1,b_2,\cdots,b_{n})&:=\frac{\Gamma(b_1)\cdots\Gamma(b_{n})}{\Gamma(b_1+\cdots+b_{n})},
\end{split}
\end{equation*}
with $\Gamma$ is the Gamma function. Therefore, for $\delta_\alpha\in H_{n-1}(L,\Z)$ we have

\begin{equation*}
\begin{split}
\int_{\delta_{\alpha}}res\left( \frac{x^{\beta'}dx'}{g}\right) &=  
\frac{(-1)^{n-1}}{\prod_{j=1}^{n} m_j} \prod_{j=1}^{n}\left(\zeta_{m_j}^{(\alpha_j+1)(\beta_j+1)} -\zeta_{m_j}^{\alpha_j(\beta_j+1)}\right)
B\left(\frac{\beta_1+1}{m_1},\dots,\frac{\beta_n+1}{m_n}\right).
\end{split}
\end{equation*}
\end{prop}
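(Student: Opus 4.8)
The plan is to compute the integral $\int_{\Delta_{\alpha+a}} \mathrm{res}\left(\frac{x^{\beta'}\,dx'}{g}\right)$ by explicit change of variables through the parametrization $\Delta_{\alpha+a}$, reducing it to a multi-parameter beta integral, and then to assemble the $\delta_\alpha$ formula by summing over $a\in\{0,1\}^n$ with the sign $(-1)^{\sum(1-a_i)}$.

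First I would make the residue explicit. Since $g=x_1^{m_1}+\dots+x_n^{m_n}$ (note that here $g$ is a function of $n$ variables $x'=x_1,\dots,x_n$ only, so $dx'=dx_1\wedge\dots\wedge dx_n$ and the ``$y$''-component plays no role at this stage), on the affine Fermat variety $L=\{g=1\}$ we can use one of the $x_j$, say $x_n$, as a transverse coordinate: $\mathrm{res}\left(\frac{x^{\beta'}dx'}{g}\right)$ restricted to $L_b=\{g=b\}$ equals, up to the usual sign, $\frac{x^{\beta'}}{\partial g/\partial x_n}dx_1\wedge\dots\wedge dx_{n-1}$. The key move is instead to not pick a coordinate but to pull back directly along $\Delta_{\alpha+a}$, using the fact that the simplex parametrization was built precisely so that $x_j^{m_j}=t_j$. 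Under $x_j=t_j^{1/m_j}\zeta_{m_j}^{\alpha_j+a_j}$ on the locus $\sum t_j=1$ (which is exactly $g(x)=1$), one computes $dx_j = \frac{1}{m_j}t_j^{\frac{1}{m_j}-1}\zeta_{m_j}^{\alpha_j+a_j}\,dt_j$, and a standard computation of the Leray residue (e.g.\ as in \cite[\S 7.9]{Mov} or \cite[\S 2.9]{arnold1988singularities}) gives
\begin{equation*}
\Delta_{\alpha+a}^{*}\,\mathrm{res}\!\left(\frac{x^{\beta'}dx'}{g}\right)
= (-1)^{n-1}\left(\prod_{j=1}^{n}\frac{1}{m_j}\right)\left(\prod_{j=1}^{n}\zeta_{m_j}^{(\beta_j+1)(\alpha_j+a_j)}\right)\,t_1^{\frac{\beta_1+1}{m_1}-1}\cdots t_n^{\frac{\beta_n+1}{m_n}-1}\,dt_1\wedge\cdots\wedge dt_{n-1},
\end{equation*}
where the power of each $t_j$ collects $x_j^{\beta_j}=t_j^{\beta_j/m_j}$ with the extra $t_j^{1/m_j-1}$ from $dx_j$, and one $dt$ is eliminated against the relation $\sum dt_j = 0$ on the simplex (this accounts for the $(-1)^{n-1}$ sign and explains why only $n-1$ of the $t_j$ survive). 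Integrating over $\Delta^{n-1}$ and recognizing the Dirichlet integral $\int_{\Delta^{n-1}} t_1^{b_1-1}\cdots t_n^{b_n-1}\,dt_1\wedge\cdots\wedge dt_{n-1} = B(b_1,\dots,b_n) = \frac{\Gamma(b_1)\cdots\Gamma(b_n)}{\Gamma(b_1+\cdots+b_n)}$ with $b_j = \frac{\beta_j+1}{m_j}$ yields the first displayed formula, with $\Lambda$ exactly as claimed. Here $\mathrm{Re}(b_j)>0$ holds automatically since $\beta_j\ge 0$ and $m_j\ge 2$.

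For the second formula I would substitute the definition $\delta_\alpha = \sum_{a\in\{0,1\}^n}(-1)^{\sum_{i=1}^n(1-a_i)}\Delta_{\alpha+a}$ and use linearity of the integral. Since the $B$-factor does not depend on $a$, it factors out, and what remains is
\begin{equation*}
\sum_{a\in\{0,1\}^n}(-1)^{\sum_{i}(1-a_i)}\prod_{j=1}^{n}\zeta_{m_j}^{(\beta_j+1)(\alpha_j+a_j)}
= \prod_{j=1}^{n}\sum_{a_j\in\{0,1\}}(-1)^{1-a_j}\zeta_{m_j}^{(\beta_j+1)(\alpha_j+a_j)}
= \prod_{j=1}^{n}\left(\zeta_{m_j}^{(\beta_j+1)(\alpha_j+1)} - \zeta_{m_j}^{(\beta_j+1)\alpha_j}\right),
\end{equation*}
where the sum over $a\in\{0,1\}^n$ splits as a product of sums over each $a_j$ because both the sign and the product are multiplicative in the coordinates. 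Combining this with $\Lambda$'s prefactor $(-1)^{n-1}/\prod m_j$ gives the stated closed form for $\int_{\delta_\alpha}\mathrm{res}\left(\frac{x^{\beta'}dx'}{g}\right)$.

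The main obstacle is the careful bookkeeping in the residue computation: getting the orientation of $\Delta^{n-1}$, the sign from eliminating one $dt_j$ via $\sum dt_j=0$, and the branch choices of the roots $t_j^{1/m_j}$ all to be mutually consistent so that the total sign is precisely $(-1)^{n-1}$ and the root-of-unity phases come out as $\zeta_{m_j}^{(\beta_j+1)(\alpha_j+a_j)}$ rather than with shifted exponents. These are exactly the normalization subtleties handled in \cite[Proposition 15.1]{Mov} and \cite[Proposition 7.13]{deligne1982hodge}, so I would lean on those references for the sign conventions rather than re-deriving them from scratch; everything else is the standard reduction to the Dirichlet/beta integral.
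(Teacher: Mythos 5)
Your proposal is correct and follows essentially the same route as the paper, which simply defers this computation to Deligne and Movasati (the same pull-back of the Gelfand--Leray residue along $\Delta_{\alpha+a}$ to a Dirichlet/beta integral over $\Delta^{n-1}$, followed by factoring the alternating sum over $a\in\{0,1\}^n$ coordinatewise into the product of differences $\zeta_{m_j}^{(\beta_j+1)(\alpha_j+1)}-\zeta_{m_j}^{(\beta_j+1)\alpha_j}$). The sign and branch bookkeeping you flag is indeed the only delicate point, and deferring it to the cited references is consistent with what the paper itself does.
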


\begin{remark}
Via the biholomorphic map $\phi_b$ the periods of $L_b$ are given by

\begin{equation}
\label{fermat1}
\int_{\delta_{\alpha}^b}res\left( \frac{x^{\beta'}dx'}{g}\right)=b^{\sum_{j=1}^{n}\frac{\beta_j+1}{m_j}-1}\int_{\delta_{\alpha}} res\left( \frac{x^{\beta'}dx'}{g}\right).
\end{equation}
\end{remark}

\subsection{Joint cycles} 

Let $P(y)$ be a polynomial and let $C$ be the set of its critical values with $0,1\notin C$. This implies that the variety 

$$U:=\{(x,y)\in \C^{n}\times \C|P(y)=-g(x) \} $$
is smooth. Fix a regular value $b\in \mathbb{C}\setminus (C\cup 0)$ of $P$. Let $\delta_{1b}\in H_{0}(P^{-1}(b),\Z)$ and $\delta_{2b}\in H_{n-1}(-g^{-1}(b),\Z)$ be two vanishing cycles and $t_s$, $s\in[0,1]$ be a path in $\C$ such that it starts from a point in $C$, crosses b and ends in $0$ (the only critical value of $g$), and never crosses $C\cup 0$ except by the mentioned cases. We assume that $\delta_{1b}$ vanishes along $t^{-1}$ when $s$ tends to $0$ and $\delta_{2b}$ vanishes along $t$ when s tend to $1$. The following object 

$$\delta_1*\delta_2=\delta_1*_t\delta_2:=\bigcup_{s\in[0,1]}\delta_{1t_s}\times \delta_{2t_s}$$ 
induces a cycle in $H_{n}(U,\Z)$ and it is called the \textbf{joint cycle} of $\delta_{1b}$ and $\delta_{2b}$ along $t$. 

The set $\{\delta^{-1}_\alpha\}_{\alpha\in J}$ described in Proposition \ref{prop:basis2608} is a basis of vanishing cycles for $H_{n-1}(\{g=-1\},\Z)$. Take a basis $\{\delta_k\}_{k=0}^{m-2}$ of vanishing cycles for $H_0(\{P=1\},\Z)$. The joint cycles of these two basis satisfy

\begin{theorem}
\label{thm:evanescent}
The $\Z$-module $H_{n}(U,\Z)$ is freely generated by

$$\delta_k*\delta_{\alpha}^{-1},\;\; k=0,\cdots,m-2,\;\; \alpha\in J.$$
\end{theorem}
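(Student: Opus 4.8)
The plan is to compute the homology of $U$ via a Mayer--Vietoris / Leray argument that realizes $U$ as fibered over $\C$ by the polynomial map $P(y)$ (or equivalently by $-g(x)$). Concretely, consider the map $h\colon U\to\C$ given by $h(x,y)=P(y)=-g(x)$. Over a regular value $b\notin C\cup\{0\}$, the fiber $h^{-1}(b)$ is $\{P=b\}\times\{-g=b\}$, i.e.\ a product of the $0$-dimensional variety $P^{-1}(b)$ (which is $m-1$ points once $b$ is regular) and the affine Fermat variety $\{-g=b\}$. By the K\"unneth formula, $H_{n-1}(h^{-1}(b),\Z)\cong H_0(P^{-1}(b),\Z)\otimes H_{n-1}(\{-g=b\},\Z)$, which by Proposition \ref{prop:basis2608} is free of rank $(m-1)\cdot\#J = (m-1)\prod_{j=1}^n(m_j-1)$. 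The first step is therefore to set up this fibration carefully, identify its singular fibers (those over $C$ and over $0$), and record that the generic fiber's middle homology is spanned by products $\delta_k\times\delta_\alpha^{b}$.

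Next I would analyze the vanishing cycle structure. The singular fibers of $h$ are of two kinds: over a critical value $c\in C$ of $P$, the factor $\{P=c\}$ degenerates while the Fermat factor stays smooth; over $0$, the Fermat factor $\{-g=0\}$ degenerates (this is the only critical value of $g$, and it is a cone point) while $\{P=0\}$ stays smooth (since $0\notin C$). Because $U$ is smooth (this is where the hypotheses $0,1\notin C$ and $b$ regular are used), the total space has no extra topology beyond what the fibration dictates. I would invoke the standard Lefschetz-type theory for polynomial maps (as in Arnold--Gusein-Zade--Varchenko \cite{arnold1988singularities} and \cite[\S 7]{Mov}): the homology $H_n(U,\Z)$ is generated by \emph{thimbles}, i.e.\ traces of vanishing cycles of the generic fiber swept along paths from $b$ to the critical values. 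The key point is that the relevant vanishing cycles in the fiber $h^{-1}(b)$ are exactly of joint type: a cycle $\delta_k\times\delta_\alpha^b$ vanishes as $b$ moves toward a critical value of $P$ (killing $\delta_k$) or toward $0$ (killing $\delta_\alpha^b$), and the thimble it sweeps is precisely the joint cycle $\delta_k*\delta_\alpha^{-1}$ of Section \ref{sec3} after renormalizing the base point $b$ to $1$ and $-1$ via the biholomorphisms $\phi_b$. Combined with the fact that $U$ is homotopy equivalent to a bouquet of $n$-spheres (it is an affine variety given by one equation, a Stein manifold of complex dimension $n$, with the homotopy type of an $n$-dimensional CW complex, and the middle Betti number equals the total Milnor number $\mu(g)\cdot\mu(P)=\prod(m_j-1)\cdot(m-1)$), a rank count forces the joint cycles $\{\delta_k*\delta_\alpha^{-1}\}$ to be a $\Z$-basis rather than merely a generating set.

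The remaining work is to pin down the rank count and the freeness over $\Z$ (not just $\Q$). For the Betti number I would use the Thom--Sebastiani / join structure: $f=g(x)+P(y)$ is a Sebastiani--Thom sum, so the Milnor fiber of $f$ (which is homotopy equivalent to $U$ in this tame polynomial setting, $U$ being the global Milnor fiber of the quasi-homogeneous $f$) is the join of the Milnor fibers of $g$ and of $P$; the join of a bouquet of $p$ spheres $S^{n-1}$ and a bouquet of $q$ spheres $S^0$ is a bouquet of $pq$ spheres $S^n$, giving $b_n(U)=\bigl(\prod_{j=1}^n(m_j-1)\bigr)(m-1)$, matching the number of cycles $\delta_k*\delta_\alpha^{-1}$. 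Torsion-freeness of $H_n(U,\Z)$ follows from $U$ having the homotopy type of a wedge of $n$-spheres. So once the joint cycles are shown to span $H_n(U,\Q)$, their count matching $\dim H_n(U,\Q)$ upgrades spanning to a $\Z$-basis. The main obstacle, and the step requiring genuine care, is the second one: justifying that sweeping the product vanishing cycles along the chosen paths $t$ yields exactly the joint cycles $\delta_k*\delta_\alpha^{-1}$ and that these thimbles generate all of $H_n(U,\Z)$ — this is the content of the Picard--Lefschetz analysis for the two distinct types of degeneration (the $P$-degeneration and the cone degeneration of $g$ at $0$), and it is where one must be careful that the distinguished system of paths is chosen so that the vanishing cycles it produces are independent and complete. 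I expect to handle this by citing the Sebastiani--Thom description of the join of two Milnor fibrations together with \cite[\S 7.4, \S 7.9]{Mov}, where the compatibility of joint cycles with the Thom--Sebastiani isomorphism is worked out, rather than redoing the Picard--Lefschetz bookkeeping from scratch.
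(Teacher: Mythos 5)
The paper offers no argument of its own for this theorem---its ``proof'' is the bare citation of \cite[Theorem 7.4]{Mov} and \cite[Theorem 2.9]{arnold1988singularities}---and your sketch is exactly the standard Thom--Sebastiani argument underlying those references (fiber $U$ over $\C$ by $h(x,y)=P(y)=-g(x)$ so that the regular fibers are products, identify the two types of degenerate fibers, realize the joint cycles as the thimbles/join of the vanishing cycles of the two factors, and close with the bouquet-of-$n$-spheres rank count $\mu(f)=\mu(g)\mu(P)=(m-1)\prod_{j=1}^n(m_j-1)$), so you are following the same route as the source the paper defers to. One small correction: for a regular value $b$ the fiber $P^{-1}(b)$ consists of $m$ points, not $m-1$, so the K\"unneth factor you actually want is the reduced homology $\widetilde{H}_0(P^{-1}(b),\Z)\cong\Z^{m-1}$, which is where the vanishing cycles $\delta_k$ live; with that replacement your rank bookkeeping becomes consistent and the rest of the argument stands.
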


\begin{proof}
See \cite[Theorem 7.4]{Mov} or \cite[Theorem 2.9]{arnold1988singularities} for a proof in more general context. 
\end{proof}

For $t_0\in T$ and $t\in T$ in a neighborhood of $t_0$, the monodromy of  $\delta_k^{t_0}*\delta_{\alpha}^{-1}\in H_n(U_{t_0},\Z)$ is given by $\delta_k^{t}*\delta_{\alpha}^{-1}\in H_n(U_{t},\Z)$ where $\delta_k^t$ is the monodromy of $\delta_k^{t_0}$ in the family     

$$\mathcal{V}:=\{(y,t)\in\C\times T|\;P_t(y)=1\}.$$
Now, we describe how to reduce a higher dimensional integral to a lower-dimensional one.

\begin{prop}
\label{inte}
The integral over cycle $\delta_k*\delta_{\alpha}^{-1}$ is given by
\begin{equation*}
\int_{\delta_k*\delta_{\alpha}^{-1}}res\left( \frac{\omega_\beta}{P(y)+g(x)}\right)=\left\lbrace
\begin{array}{ll}
\frac{p(\{-g=1\},\beta',\delta_{\alpha}^{-1})}{p(\{z^q=1 \},\gamma,\delta)}\int_{\delta_k*\delta}
res\left(\frac{y^{\beta_{n+1}}z^{\gamma}dy\wedge dz}{P-z^q}\right) & 
A_{\beta'}\notin\N  \\
\\
\frac{p(\{-g=1\},\beta',\delta_{\alpha}^{-1})}{b^{A_\beta-1}}\int_{\widetilde{\delta}_k}
\widetilde{\omega} & A_{\beta'}\in\N 
\end{array},
\right.
\end{equation*}
where $\beta=(\beta',\beta_{n+1})$, $\delta=[\zeta_q]-[1]\in H_0(\{z^q=1\},\Z)$, $q$ and $\gamma$ are given by the equality $A_{\beta'}:=\sum_{i=1}^{n}\frac{\beta_i+1}{m_i}=\frac{\gamma+1}{q}$, and $p(\{z^q=1 \},\gamma,\delta)=\frac{\zeta_q^{\gamma+1}-1}{q}$, in the first case. In the second case, $\widetilde{\delta}_k\in H_0(\{P=0\},\Z)$ is the monodromy of $\delta_k$ along the path $t_s$ and $\widetilde{\omega}$ is the function such that $d\widetilde{\omega}=P(y)^{A_{\beta'}-1}y^{\beta_{n+1}} dy.$ In both cases

\begin{equation*}
    p(\{-g=1\},\beta',\delta_{\alpha}^{-1}):=\int_{\delta_{\alpha}^{-1}}res\left( \frac{x^{\beta'}dx'}{g}\right),
\end{equation*}
where $x^{\beta'}=x_1^{\beta_1}x_2^{\beta_2}\dots x_n^{\beta_n}$ and $dx'=dx_1\wedge\dots\wedge dx_{n}.$
\end{prop}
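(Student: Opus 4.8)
\textbf{Proof plan for Proposition \ref{inte}.}

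The plan is to reduce the $(n+1)$-dimensional integral over the joint cycle $\delta_k*\delta_\alpha^{-1}$ to a one-dimensional integral by integrating out the Fermat variables $x'=(x_1,\dots,x_n)$ first, exploiting the product structure $\delta_k*\delta_\alpha^{-1}=\bigcup_{s\in[0,1]}\delta_{1t_s}\times\delta_{2t_s}$ of the joint cycle. First I would use the Leray residue formalism: writing $f_t=P(y)+g(x)$, the residue $\operatorname{res}(\omega_\beta/f)$ restricted to the fiber where $g(x)=b$ (equivalently $P(y)=-b$) factors, up to sign and the Jacobian $dg$, as $\bigl(x^{\beta'}dx'/dg\bigr)\wedge\bigl(y^{\beta_{n+1}}dy\bigr)$. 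Integrating the $x'$-part over the vanishing cycle $\delta_\alpha^{-1}$ sitting in the fiber $\{-g=b\}$ produces, by the scaling relation \eqref{fermat1} together with Proposition \ref{pro:fermat}, the factor $b^{A_{\beta'}-1}\,p(\{-g=1\},\beta',\delta_\alpha^{-1})$, where $A_{\beta'}=\sum_{i}\frac{\beta_i+1}{m_i}$. What remains is the integral of $b^{A_{\beta'}-1}y^{\beta_{n+1}}\,dy$ over the one-cycle traced out by $\delta_k$ as $b=-P(y)$ varies; substituting $b=-P(y)$ (so $b^{A_{\beta'}-1}=(-P(y))^{A_{\beta'}-1}$ up to a root-of-unity constant absorbed into the normalization) gives the one-dimensional reduction.

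The dichotomy in the statement comes from whether $A_{\beta'}\in\N$. When $A_{\beta'}\notin\N$, the function $b\mapsto b^{A_{\beta'}-1}$ is genuinely multivalued, and the honest way to package the $y$-integral is to reintroduce an auxiliary variable: set $q$ and $\gamma$ by $A_{\beta'}=\frac{\gamma+1}{q}$ (possible since $A_{\beta'}\in\Q_{>0}$), so that $b^{A_{\beta'}-1}=z^{\gamma+1-q}$ on the auxiliary Fermat curve $\{z^q=-P(y)\}$ — more precisely, one compares the periods of $\{z^q=1\}$ against those of the $\delta_\alpha^{-1}$-integral, picking up the ratio $p(\{-g=1\},\beta',\delta_\alpha^{-1})/p(\{z^q=1\},\gamma,\delta)$ with $\delta=[\zeta_q]-[1]$ and $p(\{z^q=1\},\gamma,\delta)=\frac{\zeta_q^{\gamma+1}-1}{q}$ being exactly the rank-one instance of Proposition \ref{pro:fermat}. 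This turns the residue into $\operatorname{res}\bigl(y^{\beta_{n+1}}z^\gamma\,dy\wedge dz/(P-z^q)\bigr)$ integrated over $\delta_k*\delta$, which is the first branch. When $A_{\beta'}\in\N$, the exponent $b^{A_{\beta'}-1}=(-P(y))^{A_{\beta'}-1}$ is a genuine polynomial in $y$, so $P(y)^{A_{\beta'}-1}y^{\beta_{n+1}}dy$ is an exact holomorphic $1$-form $d\widetilde\omega$; by Stokes the integral collapses to evaluating $\widetilde\omega$ on the boundary $\partial$ of the relevant arc, which is $H_0$-class of the $0$-cycle $\widetilde\delta_k$ obtained by transporting $\delta_k$ along the path $t_s$ to the fiber $\{P=0\}$ (the $s\to0$ endpoint of the path, where $b\to0$), giving the second branch; the division by $b^{A_\beta-1}$ records the normalization of the $x'$-period at $b=1$ versus the running value.

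The main obstacle is bookkeeping the multivaluedness and orientations consistently: one must track which branch of $b^{1/m_j}$ (hence which $\zeta_{m_j}$-powers via $\phi_b$) is selected as $b$ moves along $t_s$, verify that the monodromy of $\delta_k$ matches the vanishing-cycle hypotheses placed on $\delta_{1b},\delta_{2b}$ in the definition of the joint cycle, and confirm that the formal-sum structure of $\delta_\alpha^{-1}$ interacts correctly with the residue so that only the stated product of beta-type factors survives (this is the inductive cancellation already indicated in the commented-out proof of Proposition \ref{pro:fermat}). I would also need the compatibility of the Leray residue with this fibered decomposition — i.e.\ that $\operatorname{res}$ of a form on $\{P+g=0\}$ restricts fiberwise as claimed — which follows from the local description of the residue but should be stated carefully. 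Once these sign/branch issues are pinned down, both cases follow from Proposition \ref{pro:fermat}, the scaling relation \eqref{fermat1}, and (in the integral case) Stokes' theorem.
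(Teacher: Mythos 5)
Your plan is essentially correct, but note that the paper does not prove this proposition at all: its entire ``proof'' is the citation ``This is just a particular case of \cite[Proposition 13.9]{Mov}.'' What you have written is a reconstruction of the argument behind that cited result, and it is the right one: integrate out the Fermat factor fiberwise using the product structure of the joint cycle, pick up the scaling factor $b^{A_{\beta'}-1}$ from \eqref{fermat1} together with $p(\{-g=1\},\beta',\delta_{\alpha}^{-1})$ from Proposition \ref{pro:fermat}, and then either (i) rewrite the leftover multivalued factor $P(y)^{A_{\beta'}-1}$ as $z^{\gamma+1-q}$ on the auxiliary curve $\{P=z^q\}$, dividing by the rank-one period $p(\{z^q=1\},\gamma,\delta)$ to normalize, or (ii) when $A_{\beta'}\in\N$, integrate the resulting polynomial $1$-form by Stokes. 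One small slip: the $0$-cycle $\widetilde{\delta}_k\in H_0(\{P=0\},\Z)$ lives at the $s\to 1$ end of the path $t_s$ (where $b=t_1=0$ and the Fermat cycle $\delta_{2b}$ vanishes), not at $s\to 0$ as your parenthetical says; at $s\to 0$ the two points of $\delta_{1b}$ collide at a critical point of $P$, which is what makes the boundary contribution there cancel and leaves only $\int_{\widetilde{\delta}_k}\widetilde{\omega}$. The remaining sign, branch, and normalization bookkeeping (including the exact power $b^{A_\beta-1}$ in the second branch) is, as you acknowledge, where the real work lies, and is precisely what the cited \cite[Proposition 13.9]{Mov} carries out.
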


\begin{proof}
This is just a particular case of \cite[Proposition 13.9]{Mov}.
\end{proof}

As an illustration of the previous proposition we have

\begin{prop}
\label{prop:compint2006}
Let $f=g(x)+P(y)$ be a polynomial with $g(x)=x_1^{m_1}+x_{2}^{m_2}+\dots+x_{n}^{m_n}$, $m_j\geq 2$  and  $P(y)=y(1-y)(\lambda-y)$, $\lambda>1$. Consider the cycles $\delta_{0}:=[1]-[0], \delta_{1}:=[\lambda]-[1]\in H_0(\{P(y)=0\},\Z)$ and $t_s$ the straight line connecting one of the critical values of $P(y)$ with $0$.  If $A_{\beta'}\notin\N$, we have 

\begin{equation}
\label{eq:cal1710}
    \begin{split}
\int_{\delta_0*_t\delta_{\alpha}^{-1}} res\left(\frac{\omega_\beta}{f}\right)
=&\frac{\lambda^{A_{\beta'}-1}p(\{g=-1 \},\beta',\delta_{\alpha}^{-1})}{\zeta_{q}^{\gamma+1}-1}B\left(A_{\beta'}+\beta_{n+1},A_{\beta'}\right)\cdot \\ 
& \;\;\: F\left(A_{\beta'}+\beta_{n+1},1-A_{\beta'},2A_{\beta'}+\beta_{n+1};\frac{1}{\lambda}\right),
    \end{split}
\end{equation}

\begin{equation}
\label{eq:cal17101}
    \begin{split}
\int_{\delta_1*_t\delta_{\alpha}^{-1}} res\left(\frac{\omega_\beta}{f}\right)
=&\frac{(-1)^{A_{\beta'}-1}(\lambda-1)^{2A_{\beta'}-1}p(\{g=-1 \},\beta',\delta_{\alpha}^{-1})}{\zeta_{q}^{\gamma+1}-1}B\left(A_{\beta'},A_{\beta'}\right)\cdot \\ 
& \;\;\: F\left(A_{\beta'},1-A_{\beta'}-\beta_{n+1},2A_{\beta'};1-\lambda\right),
    \end{split}
\end{equation}
where $F(a,b,c;z)$ is the hypergeometric function and

\begin{equation*}
\begin{split}
p(\{g=-1 \},\beta',\delta_{\alpha}^{-1}) &=  
\frac{(-1)^{n+A_{\beta'}}}{\prod_{j=1}^{n} m_j} \prod_{j=1}^{n}\left(\zeta_{m_j}^{(\alpha_j+1)(\beta_j+1)} -\zeta_{m_j}^{\alpha_j(\beta_j+1)}\right)
B\left(\frac{\beta_1+1}{m_1},\dots,\frac{\beta_n+1}{m_n} \right).
\end{split}
\end{equation*}
\end{prop}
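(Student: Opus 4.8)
The plan is to specialize the general reduction formula of Proposition \ref{inte} (first case, $A_{\beta'}\notin\N$) to the concrete cubic $P(y)=y(1-y)(\lambda-y)$ and the explicit $0$-cycles $\delta_0=[1]-[0]$ and $\delta_1=[\lambda]-[1]$, then recognize the resulting one-dimensional integral as an Euler-type integral representation of Gauss' ${}_2F_1$. First I would fix the path $t_s$ as the straight segment from a critical value of $P$ down to $0$, compute the monodromy $\widetilde\delta_k\in H_0(\{P=0\},\Z)$ of $\delta_k$ along $t_s$ — for $\delta_0$ this drags the pair $\{1,b\}$-type configuration so that the relevant vanishing cycle becomes the difference of the two branch points $y=1$ and $y=0$, and similarly $y=\lambda$ and $y=1$ for $\delta_1$ — and identify $\widetilde\omega$ with a primitive of $P(y)^{A_{\beta'}-1}y^{\beta_{n+1}}\,dy$ as dictated by Proposition \ref{inte}. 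Since the integrand of $\int_{\widetilde\delta_k}\widetilde\omega$ is, after integration by parts or directly via $d\widetilde\omega$, reduced to $\int P(y)^{A_{\beta'}-1}y^{\beta_{n+1}}\,dy$ over the appropriate segment, I would next perform the explicit substitution that sends the segment $[0,1]$ (resp. $[1,\lambda]$) to the standard interval, writing $P(y)=y(1-y)(\lambda-y)$ in the form that exposes the three factors $y^{A_{\beta'}-1}$, $(1-y)^{A_{\beta'}-1}$, $(\lambda-y)^{A_{\beta'}-1}$.

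For \eqref{eq:cal1710} I would substitute $y=u$ on $[0,1]$ and factor $(\lambda-y)^{A_{\beta'}-1}=\lambda^{A_{\beta'}-1}(1-u/\lambda)^{A_{\beta'}-1}$, turning the integral into $\lambda^{A_{\beta'}-1}\int_0^1 u^{A_{\beta'}+\beta_{n+1}-1}(1-u)^{A_{\beta'}-1}(1-u/\lambda)^{A_{\beta'}-1}\,du$, which is exactly Euler's integral for $B(A_{\beta'}+\beta_{n+1},A_{\beta'})\,F(A_{\beta'}+\beta_{n+1},1-A_{\beta'},2A_{\beta'}+\beta_{n+1};1/\lambda)$ (using the standard formula $F(a,b,c;z)=\frac{1}{B(b,c-b)}\int_0^1 u^{b-1}(1-u)^{c-b-1}(1-zu)^{-a}\,du$ with $a=1-A_{\beta'}$, $b=A_{\beta'}+\beta_{n+1}$, $c=2A_{\beta'}+\beta_{n+1}$, $z=1/\lambda$). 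For \eqref{eq:cal17101} I would instead substitute $y=1+(\lambda-1)u$ on $[1,\lambda]$, so that $1-y=-(\lambda-1)u$, $\lambda-y=(\lambda-1)(1-u)$, and $y=1+(\lambda-1)u=1-(1-\lambda)u$; collecting the powers of $(\lambda-1)$ and the sign $(-1)^{A_{\beta'}-1}$ gives $(-1)^{A_{\beta'}-1}(\lambda-1)^{2A_{\beta'}-1}\int_0^1 u^{A_{\beta'}-1}(1-u)^{A_{\beta'}-1}(1-(1-\lambda)u)^{\beta_{n+1}}\,du$, which is $(-1)^{A_{\beta'}-1}(\lambda-1)^{2A_{\beta'}-1}B(A_{\beta'},A_{\beta'})\,F(-\beta_{n+1},A_{\beta'},2A_{\beta'};1-\lambda)$; using the symmetry $F(a,b,c;z)=F(b,a,c;z)$ and an Euler transformation if needed, this matches $F(A_{\beta'},1-A_{\beta'}-\beta_{n+1},2A_{\beta'};1-\lambda)$. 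The prefactor $\frac{p(\{g=-1\},\beta',\delta_{\alpha}^{-1})}{\zeta_q^{\gamma+1}-1}$ comes verbatim from the first case of Proposition \ref{inte} together with $p(\{z^q=1\},\gamma,\delta)=\frac{\zeta_q^{\gamma+1}-1}{q}$, and the stated closed form of $p(\{g=-1\},\beta',\delta_{\alpha}^{-1})$ follows from Proposition \ref{pro:fermat} applied with $b=-1$ via the scaling relation \eqref{fermat1}, which accounts for the extra sign $(-1)^{A_{\beta'}}$ (note $(-1)^{\sum(\beta_j+1)/m_j-1}=(-1)^{A_{\beta'}-1}$, and combining with the $(-1)^{n-1}$ of Proposition \ref{pro:fermat} gives $(-1)^{n+A_{\beta'}}$ up to the telescoping of the root-of-unity products into $\zeta_{m_j}^{(\alpha_j+1)(\beta_j+1)}-\zeta_{m_j}^{\alpha_j(\beta_j+1)}$).

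The main obstacle I anticipate is bookkeeping the branches and signs correctly: the fractional powers $P(y)^{A_{\beta'}-1}$ are multivalued, so one must pin down on which sheet the primitive $\widetilde\omega$ lives along the deformed path $t_s$, and track the argument of each of the three linear factors as $y$ traverses $[0,1]$ or $[1,\lambda]$ (this is where the $(-1)^{A_{\beta'}-1}$ and the precise power of $\lambda$ versus $\lambda-1$ get fixed). A secondary technical point is matching the hypergeometric parameters: the raw Euler integral naturally produces $F(-\beta_{n+1},A_{\beta'},2A_{\beta'};1-\lambda)$ in the second case, and one needs the Euler/Pfaff transformation $F(a,b,c;z)=(1-z)^{c-a-b}F(c-a,c-b,c;z)$ to rewrite it in the normalized form stated; one should check the transformation does not introduce an extra power of $(1-\lambda)$ that would alter the prefactor. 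Once the branch conventions are fixed consistently with the orientation conventions of the joint-cycle construction in Section \ref{sec3}, the remaining computation is a routine change of variables in a definite integral, and I would present it compactly, citing Proposition \ref{inte} for the reduction, Proposition \ref{pro:fermat} and \eqref{fermat1} for the Fermat factor, and the classical Euler integral for ${}_2F_1$ for the final identification.
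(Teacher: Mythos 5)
Your overall strategy coincides with the paper's: apply the first case of Proposition \ref{inte}, reduce to the one-dimensional integral $\int_C y^{\beta_{n+1}}P(y)^{A_{\beta'}-1}\,dy$ over the segment joining two roots of $P$, and identify the result with Euler's integral representation of $F(a,b,c;z)$; the prefactor and the closed form of $p(\{g=-1\},\beta',\delta_{\alpha}^{-1})$ are obtained exactly as in the paper from Proposition \ref{pro:fermat} and \eqref{fermat1}, and your sign bookkeeping $(-1)^{n-1}(-1)^{A_{\beta'}-1}=(-1)^{n+A_{\beta'}}$ is correct. One presentational slip: you describe the reduction via $\widetilde{\delta}_k$ and the primitive $\widetilde{\omega}$ of $P(y)^{A_{\beta'}-1}y^{\beta_{n+1}}dy$, but that is the second case of Proposition \ref{inte}, reserved for $A_{\beta'}\in\N$; in the relevant case $A_{\beta'}\notin\N$ the reduction passes through the curve $\{P(y)=z^q\}$, and one recovers the same line integral by projecting $\delta_k*\delta$ onto the $y$-coordinate and using $\frac{\gamma-q+1}{q}=A_{\beta'}-1$. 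Since the endpoint is the integral you actually compute, this only requires quoting the correct case.

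The genuine problem is in your $[1,\lambda]$ computation. Under $y=1+(\lambda-1)u$ the factor $y^{A_{\beta'}-1}$ inside $P(y)^{A_{\beta'}-1}$ contributes $(1-(1-\lambda)u)^{A_{\beta'}-1}$ in addition to $y^{\beta_{n+1}}=(1-(1-\lambda)u)^{\beta_{n+1}}$, so the correct integrand is $u^{A_{\beta'}-1}(1-u)^{A_{\beta'}-1}(1-(1-\lambda)u)^{A_{\beta'}+\beta_{n+1}-1}$, not $(1-(1-\lambda)u)^{\beta_{n+1}}$ as you wrote. With the correct exponent, Euler's formula with $a=A_{\beta'}$, $-b=A_{\beta'}+\beta_{n+1}-1$, $c=2A_{\beta'}$ yields $B(A_{\beta'},A_{\beta'})F(A_{\beta'},1-A_{\beta'}-\beta_{n+1},2A_{\beta'};1-\lambda)$ directly, and no hypergeometric transformation is needed. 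The fallback you propose would in fact fail: applying $F(a,b,c;z)=(1-z)^{c-a-b}F(c-a,c-b,c;z)$ to $F(A_{\beta'},-\beta_{n+1},2A_{\beta'};1-\lambda)$ produces $\lambda^{A_{\beta'}+\beta_{n+1}}F(A_{\beta'},2A_{\beta'}+\beta_{n+1},2A_{\beta'};1-\lambda)$, which has neither the stated parameters nor the stated prefactor, and the symmetry $F(a,b,c;z)=F(b,a,c;z)$ cannot turn $-\beta_{n+1}$ into $1-A_{\beta'}-\beta_{n+1}$. The fix is simply to redo the substitution; once that is done your argument reproduces the paper's proof, which carries out only the $\delta_0$ case explicitly and leaves the $\delta_1$ case as a straightforward analogous computation.
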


\begin{proof}
It is just an application of Propositions \ref{pro:fermat} and \ref{inte}. Note that the integral

$$\int_{\delta_k*\delta}
res\left(\frac{y^{\beta_{n+1}}z^{\gamma}dy\wedge dz}{P-z^q}\right)$$
is over a one-dimensional cycle in $\{P(y)=z^q\}$ and can be computed as a line integral by projecting $\delta_k*\delta$ onto the $y$-coordinate. For example if $k=0,$ we have 

\begin{equation*}
    \begin{split}
\int_{\delta_0*\delta}
res\left(\frac{y^{\beta_{n+1}}z^{\gamma}dy\wedge dz}{P-z^q}\right)&=\frac{1}{q}\int_{\delta_0*\delta}y^{\beta_{n+1}}z^{\gamma-q+1}dy\\
&=\frac{1}{q}\int_{C}y^{\beta_{n+1}}P(y)^{\frac{\gamma-q+1}{q}}dy\\
&=\frac{\lambda^{\frac{\gamma-q+1}{q}}}{q}\int_{0}^1s^{\beta_{n+1}}[s(1-s)(1-\frac{s}{\lambda})]^{\frac{\gamma-q+1}{q}}ds,\\
    \end{split}
\end{equation*} 
where $C$ is the path induced by the projection of $\delta_0*\delta$ that connects the roots $0$ and $1$ of the polynomial $P(y)$. A straightforward computation allows us to conclude the proof, taking into account that

\begin{equation*}
\label{hypint}
    F(a,b,c;z)=\frac{1}{B(a,c-a)}\int_0^1 t^{a-1}(1-t)^{c-a-1}(1-zt)^{-b}dt,
\end{equation*}
for $Re(c)>Re(a)>0$. 
\end{proof}

\subsection{Pole reduction}
We have seen how to calculate integrals of the residue of differential forms with pole of order one over joint cycles. Now we explain how to calculate the integrals of the residue of differential forms with arbitrary pole order. For this, we reduce the pole order of the differential form to order one and then we apply Proposition \ref{inte}. This is also known as Griffiths-Dwork method. We use an affine version of this method taken from \cite[\S 10,11]{Mov}.

Let $\Delta$ be the discriminant of the polynomial $P(y)$. We know there are polynomials $Q_1(y)$, $Q_2(y)$ such that 

\begin{equation}
\label{discriminant}
\Delta=Q_1\frac{\partial P}{\partial y}+PQ_2.    
\end{equation}

\begin{example}
\label{ex:discr1908}
Our main case of interest is when $P=y(1-y)(\lambda-y)$. In this case we have $\Delta=\lambda^2(1-\lambda)^2$. The polynomials $Q_1$, $Q_2$ satisfying equation \eqref{discriminant} are given by

$$Q_1(y)=a_\lambda y^2+b_\lambda y+c_\lambda,\;\; Q_2(y)=-3a_\lambda y+e_\lambda,$$
with

$$a_\lambda=2(\lambda^2-\lambda+1),\;\; b_\lambda=-(2\lambda^3-\lambda^2-\lambda+2),$$
$$c_\lambda=\lambda(1-\lambda)^2,\;\; e_\lambda=
4\lambda^3-3\lambda^2-3\lambda+4.$$
\end{example}

The following description of the differential form $\Delta dx$ will help us to reduce the pole order of a differential form with a pole along $\{f=0\}.$

\begin{prop}
There is an $n$-differential form $\xi$ such that

\begin{equation}
\label{discriminant1}
\Delta dx=df\wedge \xi+fQ_2dx,
\end{equation}
with $Q_2(y)$ as above.

\end{prop}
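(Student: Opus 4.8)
The goal is to produce an $n$-form $\xi$ with $\Delta\,dx = df\wedge\xi + f\,Q_2\,dx$, where $\Delta$ is the discriminant of $P(y)$, $Q_2$ is the polynomial from the identity $\Delta = Q_1\,\partial_y P + P\,Q_2$, and $dx = dx_1\wedge\cdots\wedge dx_{n+1}$ with $x_{n+1}=y$. The plan is to build $\xi$ explicitly from $Q_1$ and the vector field that contracts $df$ appropriately. First I would recall that $f = g(x)+P(y)$, so $df = \sum_{i=1}^n \partial_i g\, dx_i + P'(y)\,dy$, where $P'(y)=\partial_y P$. The natural candidate is $\xi := Q_1(y)\, \iota_{\partial_y}(dx) = Q_1(y)\,(-1)^n\, dx_1\wedge\cdots\wedge dx_n$ (up to the sign coming from moving $dy$ to the last slot), i.e. $\xi = (-1)^{n} Q_1(y)\, dx_1\wedge\cdots\wedge dx_n$, or with whatever sign convention makes the wedge come out right.

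Then I would simply compute $df\wedge\xi$. Since $\xi$ is a multiple of $dx_1\wedge\cdots\wedge dx_n$, only the $dy$-component of $df$ survives in the wedge, giving $df\wedge\xi = P'(y)\,dy\wedge (-1)^{n} Q_1(y)\, dx_1\wedge\cdots\wedge dx_n = Q_1(y)P'(y)\, dx_1\wedge\cdots\wedge dx_n\wedge dy = Q_1(y)P'(y)\, dx$ after reordering (the sign $(-1)^n$ is exactly what is needed to move $dy$ past the $n$ forms $dx_1,\dots,dx_n$). Now invoke the defining relation \eqref{discriminant}: $Q_1 P' = \Delta - P\, Q_2$. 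Hence $df\wedge\xi = (\Delta - P(y)Q_2(y))\,dx = \Delta\,dx - P(y)Q_2(y)\,dx$. It remains only to replace $P(y)$ by $f$ modulo $g(x)$: since on the relevant computation we want $f\,Q_2\,dx$, note $P(y)\,Q_2(y)\,dx = (f - g(x))Q_2(y)\,dx$, so $df\wedge\xi = \Delta\,dx - f\,Q_2\,dx + g(x)Q_2(y)\,dx$. This leaves a spurious term $g(x)Q_2(y)\,dx$, so the naive $\xi$ is not quite right; I would correct it by adding to $\xi$ a term that absorbs $g(x)Q_2(y)\,dx$ into $df\wedge(\cdot)$, using that $g$ is a sum of powers $x_i^{m_i}$ and hence $g = \sum \frac{x_i}{m_i}\partial_i g$ (Euler-type relation), so $g(x)Q_2(y)\,dx$ can be written as $df\wedge \eta$ for an explicit $\eta$ built from $\sum_i \frac{(-1)^{i-1}x_i}{m_i}Q_2(y)\widehat{dx_i}\wedge dy$ up to the $df$-contribution of $P'(y)$, which contributes a further $f$-divisible correction. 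Iterating/collecting these corrections yields the final $\xi$.

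The main obstacle is bookkeeping: getting every sign in the wedge reorderings correct and making sure that the correction terms needed to kill $g(x)Q_2(y)\,dx$ themselves only generate further terms that are either part of $df\wedge\xi$ or divisible by $f$ (so they can be folded into the $fQ_2\,dx$ term, possibly after adjusting $Q_2$ — but here the statement fixes $Q_2$, so the corrections must land exactly in the $df\wedge\xi$ slot or cancel). Concretely, I would set $\xi = (-1)^{n}Q_1(y)\,dx_1\wedge\cdots\wedge dx_n + Q_2(y)\sum_{i=1}^{n}\frac{(-1)^{i-1}x_i}{m_i}\,\widehat{dx_i}$ (the second sum being an $n$-form in $dx_1,\dots,dx_n,dy$ with $dx_i$ omitted), and verify directly that $df\wedge\xi = \Delta\,dx - f\,Q_2\,dx$. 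The cross-term from wedging $\sum_i \partial_i g\, dx_i$ against the $Q_1$-part vanishes (repeated $dx_i$), the $P'dy$ against the $Q_1$-part gives $Q_1 P'\,dx$, the $\sum_i\partial_i g\,dx_i$ against the second sum gives $g(x)Q_2\,dx$ by Euler's relation, and the $P'dy$ against the second sum gives a term proportional to $\left(\sum_i \frac{x_i}{m_i}\partial_i(\,\cdot\,)\right)$-type expression that I expect to vanish or be absorbable. Adding $Q_1P' + gQ_2 = \Delta - PQ_2 + gQ_2 = \Delta - (P-g)Q_2$; but $P - g = P - (f - P) $ is wrong — rather $f = g + P$ so $g = f - P$ and $P - g$ does not simplify to $f$. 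Let me instead use $g = f - P$: then $Q_1P' + gQ_2 = \Delta - PQ_2 + (f-P)Q_2 = \Delta + f Q_2 - 2PQ_2$, which is not what we want either, so the second correction term must carry a sign: take $\xi = (-1)^nQ_1 dx_1\wedge\cdots\wedge dx_n - Q_2\sum_i \frac{(-1)^{i-1}x_i}{m_i}\widehat{dx_i}$ giving $Q_1P' - gQ_2 = \Delta - PQ_2 - (f-P)Q_2 = \Delta - fQ_2$, exactly the claim. So the real content is just this sign-tracking verification plus Euler's identity $\sum_i \frac{x_i}{m_i}\partial_i g = g$; the hard part is purely organizing the computation cleanly, and I would present it as a one-line definition of $\xi$ followed by a short displayed computation of $df\wedge\xi$.
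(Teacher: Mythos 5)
Your final $\xi=(-1)^nQ_1\,dx_1\wedge\cdots\wedge dx_n-Q_2\sum_i\frac{(-1)^{i-1}x_i}{m_i}\widehat{dx_i}$ is exactly the form the paper writes down (namely $Q_1(y)\,dx'-Q_2(y)\,\eta'\wedge dy$, the sign $(-1)^n$ being $1$ since $n$ is even throughout), and your verification via $\Delta=Q_1P'+PQ_2$ together with the Euler identity $\sum_i\frac{x_i}{m_i}\partial_ig=g$ is the same argument the paper leaves implicit. The proposal is correct and takes essentially the same approach.
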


\begin{proof}
First remember that $x=(x',y)$ and $dx=dx'\wedge dy= dx_1\wedge\dots\wedge dx_n\wedge dy$. Consider

$$\xi=Q_1(y)dx'-Q_2(y)\eta' \wedge dy,$$
where $\eta':=\sum_{i=1}^{n}(-1)^{i-1}\frac{x_i}{m_i}\widehat{dx'_i},$  with  $\widehat{dx'_i}:=dx_1\wedge dx_2\wedge\dots\wedge dx_{i-1}\wedge dx_{i+1}\wedge\dots\wedge dx_{n}$ and $Q_1$ $Q_2$ satisfy \eqref{discriminant}.
\end{proof}

From equality \eqref{discriminant1}, it follows that there are $n$-differential forms $\xi_\beta$ such that

$$\Delta \omega_\beta=df\wedge \xi_\beta+f Q_2\omega_\beta,$$
namely $\xi_\beta=x^\beta\xi$. Thus

$$\Delta\frac{\omega_\beta}{f^j}=\frac{df\wedge \xi_\beta+f Q_2\omega_\beta}{f^j}=\frac{1}{j-1}\left(\frac{d\xi_\beta}{f^{j-1}}-d\left(\frac{\xi_\beta}{f^{j-1}}\right) \right)+\frac{Q_2\omega_\beta}{f^{j-1}}.$$
Using that $d(x'^{\beta'}\eta')=A_{\beta'}x'^{\beta'}dx'$, we have

\begin{equation*}
\begin{split}
 d\xi_\beta&=x'^{\beta'} \left(\beta_{n+1}y^{\beta_{n+1}-1}Q_1(y)+y^{\beta_{n+1}}\frac{\partial Q_1(y)}{\partial y}\right)dx'\wedge dy-A_{\beta'}Q_2(y)\omega_{\beta}\\
 &=\beta_{n+1}Q_1\omega_{\beta-(0',1)}+(Q'_1-A_{\beta'}Q_2)\omega_{\beta},
    \end{split}
\end{equation*}
where $\beta=(\beta',\beta_{n+1}),\;(0',1) \in I$ and $A_{\beta'}:=\sum_{i=1}^{n}\frac{\beta_i+1}{m_i}.$ Therefore, in $H_{dR}^{n+1}(\C^{n+1}\backslash U)$ we obtain the following formula that allows us to reduce the pole order:

\begin{equation}
\label{reduction}
\begin{split}
\left[\frac{\omega_\beta}{f^j} \right]&=\frac{1}{\Delta}\left[ \frac{1}{j-1}\left(\frac{d\xi_\beta}{f^{j-1}}\right)+\frac{Q_2 \omega_\beta}{f^{j-1}} \right]\\
&=\frac{1}{\Delta}\left[\frac{\beta_{n+1}Q_1\omega_{\beta-(0',1)}+(Q'_1-A_{\beta'}Q_2)\omega_{\beta}}{(j-1)f^{j-1}}+\frac{Q_2 \omega_\beta}{f^{j-1}} \right]\\
&=\frac{1}{\Delta}\left[\frac{\beta_{n+1}Q_1}{j-1}\frac{\omega_{\beta-(0',1)}}{f^{j-1}}+\left(\left(1-\frac{A_{\beta'}}{j-1} \right)Q_2+\frac{Q_1'}{j-1} \right)\frac{\omega_\beta}{f^{j-1}} \right].
\end{split}
\end{equation}
With this, we can compute integrals of differential forms with pole of a higher order. For example:

\begin{prop}
\label{prop:alg22206}
In the same context of Proposition \ref{prop:compint2006}, we have

\begin{equation*}
\begin{split}
\int_{\delta_0*_t\delta_{\alpha}^{-1}} res\left(\frac{\omega_\beta}{f^2}\right)
=&\frac{\lambda^{A_{\beta'}-3}p(\{g=-1 \},\beta',\delta_{\alpha}^{-1})}{(1-\lambda)^2(\zeta_{q}^{\gamma+1}-1)}B\left(
A_{\beta'}+\beta_{n+1}-1,A_{\beta'}\right)\times\\
&\left[\frac{(A_{\beta'}+\beta_{n+1}-1)_2}{(2A_{\beta'}+\beta_{n+1}-1)_2}F\left(A_{\beta'}+\beta_{n+1}+1,1-A_{\beta'},2A_{\beta'}+\beta_{n+1}+1;\frac{1}{\lambda} \right)\times\right.\\
& \left(3A_{\beta'}+\beta_{n+1}-1\right)a_\lambda+\\
& \frac{A_{\beta'}+\beta_{n+1}-1}{2A_{\beta'}+\beta_{n+1}-1} F\left(A_{\beta'}+\beta_{n+1},1-A_{\beta'},2A_{\beta'}+\beta_{n+1};\frac{1}{\lambda} \right)\times\\  
&\;\;\;\Big((1-A_{\beta'})e_\lambda+(1+\beta_{n+1})b_\lambda\Big)\\
& \left.+F\left(A_{\beta'}+\beta_{n+1}-1,1-A_{\beta'},2A_{\beta'}+\beta_{n+1}-1;\frac{1}{\lambda} \right) \beta_{n+1}c_\lambda \right].
\end{split}
\end{equation*}

\begin{equation*}
\begin{split}
\int_{\delta_1*_t\delta_{\alpha}^{-1}}
res\left(\frac{\omega_\beta}{f^2}\right)
&=\frac{(-1)^{A_{\beta'}-1}(\lambda-1)^{2A_{\beta'}-3}p(\{g=-1 \},\beta',\delta_{\alpha}^{-1})}{\lambda^2 (\zeta_{q}^{\gamma+1}-1)}B(A_{\beta'},A_{\beta'})\times\\
&\;\;\left[F\left(A_{\beta'},-(A_{\beta'}+\beta_{n+1}),2A_{\beta'};1-\lambda\right)\left(3A_{\beta'}+\beta_{n+1}-1\right)a_\lambda   \right.+\\
&\;\;\; F\left(A_{\beta'},-(A_{\beta'}+\beta_{n+1}-1),2A_{\beta'};1-\lambda\right)\left((1-A_{\beta'})e_\lambda+(1+\beta_{n+1})b_\lambda \right)\\
&\;\;+\left.F\left(A_{\beta'},-(A_{\beta'}+\beta_{n+1}-2),2A_{\beta'};1-\lambda\right)\beta_{n+1}c_\lambda \right],
\end{split}
\end{equation*}
with $a_\lambda,$ $b_\lambda,$ $c_\lambda,$ $e_\lambda$ as in Example \ref{ex:discr1908}, $(a)_n$ is the Pochhammer symbol defined by $$(a)_n:=a(a+1)\cdots(a+n-1),$$ and 

\begin{equation*}
\begin{split}
p(\{g=-1 \},\beta',\delta_{\alpha}^{-1}) &=  
\frac{(-1)^{n+A_{\beta'}}}{\prod_{j=1}^{n} m_j} \prod_{j=1}^{n}\left(\zeta_{m_j}^{(\alpha_j+1)(\beta_j+1)} -\zeta_{m_j}^{\alpha_j(\beta_j+1)}\right)
B\left(\frac{\beta_1+1}{m_1},\dots,\frac{\beta_n+1}{m_n} \right).
\end{split}
\end{equation*}
\end{prop}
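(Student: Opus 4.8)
The plan is to derive both formulas by a two-step reduction: first lower the pole order from $2$ to $1$ using the reduction formula \eqref{reduction}, and then integrate the resulting order-one forms using Proposition \ref{prop:compint2006}. Concretely, applying \eqref{reduction} with $j=2$ gives
\[
\left[\frac{\omega_\beta}{f^2}\right]=\frac{1}{\Delta}\left[\beta_{n+1}Q_1\frac{\omega_{\beta-(0',1)}}{f}+\bigl((1-A_{\beta'})Q_2+Q_1'\bigr)\frac{\omega_\beta}{f}\right].
\]
Now I substitute the explicit $Q_1(y)=a_\lambda y^2+b_\lambda y+c_\lambda$ and $Q_2(y)=-3a_\lambda y+e_\lambda$ from Example \ref{ex:discr1908} (so $Q_1'=2a_\lambda y+b_\lambda$ and $\Delta=\lambda^2(1-\lambda)^2$). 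Multiplying out $Q_1\omega_{\beta-(0',1)}=a_\lambda\omega_{\beta+(0',1)}+b_\lambda\omega_\beta+c_\lambda\omega_{\beta-(0',1)}$ and $\bigl((1-A_{\beta'})Q_2+Q_1'\bigr)\omega_\beta=\bigl(-3(1-A_{\beta'})a_\lambda+2a_\lambda\bigr)\omega_{\beta+(0',1)}+\bigl((1-A_{\beta'})e_\lambda+b_\lambda\bigr)\omega_\beta$, I collect by shift in the last coordinate. The key bookkeeping point is that raising $\beta_{n+1}$ to $\beta_{n+1}+1$ (i.e. $\omega_{\beta+(0',1)}$) leaves $A_{\beta'}$ unchanged, and similarly for $\omega_{\beta-(0',1)}$; so in all three order-one integrals that appear, the factor $p(\{g=-1\},\beta',\delta_\alpha^{-1})$ and the quantity $q,\gamma$ (hence $\zeta_q^{\gamma+1}-1$) are exactly the same, only $\beta_{n+1}$ changes. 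This is what makes the final answer a clean linear combination of three hypergeometric values with a common prefactor.

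Next I feed each of the three resulting order-one periods into the formulas \eqref{eq:cal1710} (for $\delta_0$) and \eqref{eq:cal17101} (for $\delta_1$) of Proposition \ref{prop:compint2006}, with $\beta_{n+1}$ replaced by $\beta_{n+1}+1$, $\beta_{n+1}$, and $\beta_{n+1}-1$ respectively. For the $\delta_0$ case this produces three terms with Beta-factors $B(A_{\beta'}+\beta_{n+1},A_{\beta'})$, $B(A_{\beta'}+\beta_{n+1}-1,A_{\beta'})$, $B(A_{\beta'}+\beta_{n+1}-2,A_{\beta'})$ and hypergeometric functions $F(A_{\beta'}+\beta_{n+1}+1,1-A_{\beta'},2A_{\beta'}+\beta_{n+1}+1;\tfrac1\lambda)$, etc. To reach the stated form I normalize everything against the single Beta factor $B(A_{\beta'}+\beta_{n+1}-1,A_{\beta'})$, using the identity $\frac{B(a+1,c)}{B(a,c)}=\frac{a}{a+c}$ repeatedly; this is exactly where the Pochhammer ratios $\frac{(A_{\beta'}+\beta_{n+1}-1)_2}{(2A_{\beta'}+\beta_{n+1}-1)_2}$ and $\frac{A_{\beta'}+\beta_{n+1}-1}{2A_{\beta'}+\beta_{n+1}-1}$ in the claimed formula come from. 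The overall powers of $\lambda$ and $(1-\lambda)$ are assembled from the $\lambda^{A_{\beta'}-1}$ (resp.\ $(\lambda-1)^{2A_{\beta'}-1}$) in Proposition \ref{prop:compint2006} together with the $\frac{1}{\Delta}=\frac{1}{\lambda^2(1-\lambda)^2}$ from the pole reduction; for the $\delta_0$ case one also checks the sign $(-1)^{A_{\beta'}}$ inside $p(\{g=-1\},\ldots)$ stays consistent. The $\delta_1$ case is entirely parallel, using \eqref{eq:cal17101}; there the three hypergeometric functions $F(A_{\beta'},1-A_{\beta'}-\beta_{n+1},2A_{\beta'};1-\lambda)$ with $\beta_{n+1}$ shifted by $+1,0,-1$ give precisely $F(A_{\beta'},-(A_{\beta'}+\beta_{n+1}),2A_{\beta'};1-\lambda)$ and its two companions, and here the common Beta factor $B(A_{\beta'},A_{\beta'})$ does not even change under the shifts, so no Pochhammer ratios are needed.

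The coefficients $(3A_{\beta'}+\beta_{n+1}-1)a_\lambda$, $(1-A_{\beta'})e_\lambda+(1+\beta_{n+1})b_\lambda$, and $\beta_{n+1}c_\lambda$ attached to the three hypergeometric terms are then read off by matching the $\omega_{\beta+(0',1)}$, $\omega_\beta$, $\omega_{\beta-(0',1)}$ components: the $c_\lambda$ term carries the bare $\beta_{n+1}$ from $\beta_{n+1}Q_1$; the middle term combines $\beta_{n+1}b_\lambda$ with $(1-A_{\beta'})e_\lambda+b_\lambda$ to give $(1+\beta_{n+1})b_\lambda+(1-A_{\beta'})e_\lambda$; and the top term combines $\beta_{n+1}a_\lambda$ with $(2-3(1-A_{\beta'}))a_\lambda=(3A_{\beta'}-1)a_\lambda$ to give $(3A_{\beta'}+\beta_{n+1}-1)a_\lambda$ — which exactly matches. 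I expect the main obstacle to be purely organizational rather than conceptual: keeping the powers of $\lambda$, $(\lambda-1)$, the signs $(-1)^{A_{\beta'}-1}$, and the Beta/Pochhammer normalizations all consistent simultaneously across the three shifted terms, since a single off-by-one in an exponent or a misapplied $B(a+1,c)/B(a,c)$ identity propagates into a wrong coefficient. There is also a small subtlety in that $A_{\beta'}$ need not be an integer, so I should make sure the reduction formula \eqref{reduction} is being applied in $H_{dR}^{n+1}(\C^{n+1}\setminus U)$ as stated (valid for $j=2>1$) and that we are in the branch $A_{\beta'}\notin\N$ of Proposition \ref{inte}, consistently with the hypothesis carried over from Proposition \ref{prop:compint2006}; the formula with explicit $\lambda$-powers like $\lambda^{A_{\beta'}-3}$ only makes literal sense in that analytic-continuation sense, which I would note explicitly.
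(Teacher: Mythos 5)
Your proposal is correct and follows exactly the route the paper intends (the paper states this proposition as a direct "example" of the pole-reduction formula \eqref{reduction} combined with Proposition \ref{prop:compint2006}, and gives no further proof). Your bookkeeping checks out: the coefficients $(3A_{\beta'}+\beta_{n+1}-1)a_\lambda$, $(1+\beta_{n+1})b_\lambda+(1-A_{\beta'})e_\lambda$, $\beta_{n+1}c_\lambda$, the Pochhammer ratios from normalizing the Beta factors, and the prefactors $\lambda^{A_{\beta'}-3}/(1-\lambda)^2$ and $(\lambda-1)^{2A_{\beta'}-3}/\lambda^2$ all come out as stated.
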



\subsection{Pole increment}
\label{sec:poleinc1803}

We start this section by giving a criterion for when a differential form in the affine part actually comes from a differential form in the compactification. 

\begin{prop}
\label{prop:pole1102}
If $A_\beta=k\in\N$, the meromorphic form $\frac{\omega_\beta}{f^k}$ has pole of order one at infinity. If $A_\beta<k\in\N$, the meromorphic form $\frac{\omega_\beta}{f^k}$ has no pole at infinity.
\end{prop}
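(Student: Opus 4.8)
The plan is to analyze the behavior of the meromorphic form $\frac{\omega_\beta}{f^k}$ near the hyperplane at infinity $\{x_0=0\}$ inside the weighted projective space $\P^{(1,v)}$, by passing to the quasi-homogenization $F$ and comparing the order of vanishing of $x_0^d$ in the numerator against the order of the pole coming from $F^k$. First I would write the quasi-homogenization explicitly: since $F(x_0,\dots,x_{n+1})=x_0^d f(x_1/x_0^{v_1},\dots,x_{n+1}/x_0^{v_{n+1}})$ with $d=\mathrm{lcm}(m_1,\dots,m_{n+1})$ and $v_j=d/m_j$, the form $\omega_\beta=x^\beta dx$ on the affine chart $x_0=1$ lifts to a rational form on $\P^{(1,v)}$. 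In the chart where we dehomogenize, the monomial $x^\beta$ of weighted degree $\sum (\beta_j+1)v_j - d = d(A_\beta - 1)$ (recall $A_\beta=\sum_{j=1}^{n+1}(\beta_j+1)v_j/d$) combined with the volume form $dx_1\wedge\cdots\wedge dx_{n+1}$, which has weighted degree $\sum v_j$, produces a precise power of $x_0$ after rewriting everything in homogeneous coordinates and extracting $res$. The key bookkeeping step is to track that power of $x_0$: the numerator contributes $x_0$ to the power $d\cdot k - $ (weighted degree of $x^\beta dx$) $= d(k - A_\beta)$ up to the standard shift, while $F^k$ in the denominator has weighted degree $dk$.

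The central computation is therefore to show that the exponent of $x_0$ appearing in the numerator of the homogenized form, after taking the residue along $\{F=0\}$, equals exactly $d(k-A_\beta)$ plus a fixed universal constant independent of $\beta$ and $k$; then $A_\beta=k$ forces this to be the borderline value corresponding to a simple pole along $\{x_0=0\}\cap\{F=0\}$ (or rather, no worse than a simple pole in the residue form), and $A_\beta<k$, i.e. $k-A_\beta$ a positive quantity (note $A_\beta$ need not be an integer, but $k-A_\beta>0$ combined with the lattice structure gives $x_0$ to a positive power), makes the numerator vanish to positive order along $x_0=0$, killing the pole at infinity entirely. Concretely I would invoke the description of $H^{n+1}_{dR}(\C^{n+1}\setminus U)$ and the Griffiths–Steenbrink pole-order filtration from \cite{steenbrink1977intersection} and \cite[\S 10,11]{Mov}, where precisely the quantity $A_\beta$ is the one measuring which step of the Hodge/pole-order filtration the class $res(\omega_\beta/f^k)$ lives in; the statement is essentially the affine-chart translation of the fact that $res(\omega_\beta/f^k)$ extends over $X$ (resp. extends with controlled pole) exactly when $A_\beta\le k$ (resp. $A_\beta<k$).

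The main obstacle I anticipate is handling the weighted (as opposed to ordinary) projective setting carefully: on $\P^{(1,v)}$ the coordinate ring is graded by weights, the ``volume form'' is not simply $dx_0\wedge\cdots\wedge dx_{n+1}$ but carries the weight $\sum_{j=0}^{n+1} w_j = 1+\sum v_j$, and the form comparing the affine $dx$ with its homogenization picks up $x_0$ to a power determined by this weighted degree. One must also be slightly careful that $X$ is only a desingularization of $D$, but since (as the paper notes) the desingularization happens away from the affine part $\mathcal U$ and along $\{x_0=0\}$, and since having no pole at infinity on $D$ pulls back to having no pole along the exceptional locus of $X$, the statement for $X$ follows from the statement for $D$. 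So the plan is: (1) homogenize $\omega_\beta/f^k$ explicitly on $\P^{(1,v)}$, tracking the exact power of $x_0$ via weighted degrees; (2) compute the order of the resulting form, and of its residue, along $\{x_0=0\}$ in terms of $k-A_\beta$; (3) read off the two cases $A_\beta=k$ (simple pole) and $A_\beta<k$ (no pole); (4) note invariance under desingularization. I expect step (1)–(2), the weighted-degree bookkeeping, to be where all the real work lies, and the rest to be essentially formal given the cited structural results.
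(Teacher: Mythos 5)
Your proposal is correct: the paper does not prove this proposition itself but simply cites \cite[Lemma 2]{steenbrink1977intersection} and \cite[Proposition 11.4]{Mov}, and your weighted-degree bookkeeping (homogenizing $\omega_\beta/f^k$ on $\P^{(1,v)}$ and finding that the power of $x_0$ in the numerator is $d(k-A_\beta)-1$, so that $A_\beta=k$ gives a simple pole along $x_0=0$ while $A_\beta<k$ forces $d(k-A_\beta)\geq 1$ and hence no pole) is exactly the standard argument underlying those cited results. The only blemish is a small slip where you assign $x^\beta$ weighted degree $\sum(\beta_j+1)v_j-d$ rather than $\sum\beta_j v_j$, but since you correctly use $dA_\beta=\sum(\beta_j+1)v_j$ for the full form $x^\beta dx$ in the decisive step, the argument goes through.
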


\begin{proof}
See \cite[Lemma 2]{steenbrink1977intersection} or \cite[Proposition 11.4]{Mov}.
\end{proof}

How to know if the meromorphic form $\frac{\omega_\beta}{f^k}$, $A_\beta>k$ comes from a form in the compactification? For this, we increment the pole order and apply the previous proposition. To increment the pole order we reproduce \cite[Proposition 11.3]{Mov}. Consider $\eta:=\sum_{i=1}^{n+1}(-1)^{i-1}\frac{x_i}{m_i}\widehat{dx_i}$ and $\eta_\beta=x^\beta\eta$. Observe that $d\eta_\beta=A_\beta\omega_\beta$, thus

\begin{equation*}
    \begin{split}
        \frac{\omega_\beta}{f^k}
        &=\frac{d\eta_\beta}{A_\beta f^k}\\
        &=\frac{1}{A_\beta}\left(\frac{kdf\wedge\eta_\beta}{f^{k+1}}+d\left(\frac{\eta_\beta}{f^k} \right) \right).
    \end{split}
\end{equation*}
Therefore in $H_{dR}^{n+1}(\C^{n+1}\backslash U)$ we have

\begin{equation*}
    \begin{split}
        \left[\frac{\omega_\beta}{f^k}\right]
        &=\frac{k}{A_\beta}\left[\frac{df\wedge\eta_\beta}{f^{k+1}}\right]\\
        &=\frac{k}{A_\beta}\left[\frac{f\omega_\beta+(h-f)\omega_\beta+d(f-h)\wedge\eta_\beta}{f^{k+1}}\right]\\
    \end{split}
\end{equation*}
and thus

\begin{equation}
\label{increase}
    \left[\frac{\omega_\beta}{f^k}\right]=\frac{A_\beta}{A_\beta-k}\left[\frac{(h-f)\omega_\beta+d(f-h)\wedge\eta_\beta}{f^{k+1}}\right],
\end{equation}
where $h$ is the last weighted homogeneous piece of $f$ and satisfies $dh\wedge\eta_\beta=h\omega_\beta$. In our case $h=g(x)+y^{deg(P(y))}=x_1^{m_1}+\dots +x_{n}^{m_n}+y^{deg(P(y))}.$ If $P(y)=\sum_{j=0}^{m_{n+1}}c_jy^j$ with $c_{m_{n+1}}\neq 0$ we have

   \begin{equation*}
\left[\frac{\omega_\beta}{f^k}\right]=\frac{A_\beta}{m_{n+1}(A_\beta-k)}\left[\frac{\sum_{j=0}^{m_{n+1}-1}(j-m_{n+1})c_j\omega_{\beta+(0',j)}}{f^{k+1}}\right].
\end{equation*}
Therefore, using the process of pole order increment, the meromorphic form $\frac{\omega_\beta}{f^k}$ with $A_\beta>k$ can be written as a finite sum  

\begin{equation}
\label{eq:increase2001}
    \left[\frac{\omega_\beta}{f^k}\right]=\sum C_{j} \left[\frac{\omega_{\beta^{l_j}}}{f^j}\right],
\end{equation}
with $A_{\beta^{l_j}}\leq j$, $k<j$, $\beta^{l_j}=(\beta_{1}^{l_j},\dots,\beta_{n+1}^{l_j})$ and $C_j\in\C$. Note that even when $A_{\beta^{l_j}}<j$ we can increment the pole order. We will stop the process of pole order increment the first time $A_{\beta^{l_j}}\leq j$ is satisfied.

\begin{remark}
\label{rem:incrpole2508}
For the polynomial $P(y)=y(y-1)(y-\lambda)$ the pole order increment looks like
$$\left[\frac{\omega_\beta}{f^k}\right]=\frac{A_\beta}{3(A_\beta-k)}\left[\frac{(1+\lambda)\omega_{\beta+(0',2)}-2\lambda\omega_{\beta+(0',1)}}{f^{k+1}} \right]. $$
Therefore, we can write the meromorphic form $\frac{\omega_\beta}{f^k}$ with $A_\beta>k$  as a finite sum  

\begin{equation}
\label{eq:increase2508}
    \left[\frac{\omega_\beta}{f^k}\right]=\sum C_{j} \left[\frac{\omega_{\beta+(0',k_j)}}{f^j}\right],
\end{equation}
with $k<j$ and it is the first time that  $A_{\beta+(0',{k_j})}\leq j$. This means that $A_{\beta+(0',{k_j})}\leq j$ and one step before reaching \eqref{eq:increase2508}, the differential form $\frac{\omega_{\beta+(0',k_j-1)}}{f^{j-1}}$ appears with $j-1<A_{\beta+(0',k_j-1)}$ or the differential form $\frac{\omega_{\beta+(0',k_j-2)}}{f^{j-1}}$ appears with $j-1<A_{\beta+(0',k_j-2)}$.    
\end{remark}

\begin{definition}
\label{def:goodform}
A meromorphic form $\frac{\omega_\beta}{f^k}$ is called  \textbf{good form} if $A_\beta<k$ or if $k<A_\beta\notin\N$ and the differential form  written as in equation (\ref{eq:increase2001}), satisfies $A_{\beta^{l_j}}<j$.
\end{definition}

Observe that a good form has no residue at infinity and hence it comes from an element in $H^n_{dR}(X)$.

\begin{example}
\label{exam:goodform3003}
Consider $f=g+P$ with $g(x)=x_1^{2}+x_{2}^{9}$ and $P(y)=y^3+ay^2+by+c$. The differential forms $\frac{\omega_\beta}{f}$ with $\beta=(0,\beta_2,\beta_3)$, $\beta_2=2,5$ and $\beta_3=0,1$ are good forms. Let us see it for $\beta_2=2$, we can write

$$\left[\frac{\omega_\beta}{f}\right]=\frac{A_\beta}{3(A_\beta-1)}\left[\frac{-a\omega_{\beta+(0',2)}-2b\omega_{\beta+(0',1)}-3c\omega_\beta}{f^2} \right], $$
where $0'=(0,0)$. If $\beta_3=0$ then $1<A_\beta, A_{\beta+(0',1)}, A_{\beta+(0',2)}<2$ and the above equation corresponds to equation (\ref{eq:increase2001}). If $\beta_3=1,$ then $1<A_\beta, A_{\beta+(0',1)}<2$ and $A_{\beta+(0',2)}>2$. Therefore we apply again the pole order increment to the differential form $\frac{\omega_{\beta+(0',2)}}{f^2}$ and we obtain

\begin{equation*}
    \begin{split}
        \left[\frac{\omega_\beta}{f}\right]=&\frac{-aA_\beta(A_\beta+2/3)}{9(A_\beta-1)(A_\beta-4/3))}\left[\frac{-a\omega_{\beta+(0',4)}-2b\omega_{\beta+(0',3)}-3c\omega_{\beta+(0',2)}}{f^3}\right]+\\
        &\frac{A_\beta}{3(A_\beta-1)}\left[\frac{-2\omega_{\beta+(0',1)}-3c\omega_\beta}{f^2} \right].
    \end{split}
\end{equation*}
with $2<A_{\beta+(0',j)}<3,$ $j=2,3,4.$
\end{example}

\section{Families of varieties and Hodge cycles}
\label{sec4.1}
In this section, we introduce the notion of generic Hodge cycles which describe those Hodge cycles that remain Hodge when doing monodromy along any path in a family of varieties. We find generic Hodge cycles that are ``elementally" defined. As an application, we find algebraic expressions involving hypergeometric functions.

\subsection{Generic and strong generic Hodge cycles}
Let $X$ be a desingularization of the weighted hypersurface $D$ given by the quasi-homogenization $F$ of $f=g(x)+P(y)$, where $g(x)=x_1^{m_1}+\dots +x_{n}^{m_n}$, $m_i \geq 2$, $P(y):\C\rightarrow \C$ is a polynomial of degree $m=m_{n+1}\geq 2$. Let 

$$T:=\left\{t=(t_0,\dots,t_m)\in\C^{m+1}\bigg|t_m=1,\;\;\Delta(P_t)\neq0\text{ where }P_t:=\sum_{i=0}^mt_iy^i\right\}$$
be the space of polynomials of degree $m$ with nonzero discriminant, and let 

$$\mathcal{U}:=\{(x,y,t)\in\C^n\times\C\times T|\;f_t(x,y):=g(x)+P_t(y)=0\}$$
be the family of affine varieties parameterized by $T$. Thus the projection $\pi:\mathcal{U}\longrightarrow T$ is a locally trivial $C^{\infty}$ fibration (see \cite[\S 7.4]{Mov} and the references therein). We denote by $U_t:=\pi^{-1}(t)=\{f_t=0\}\subset\C^{n+1}$ and $X_t$ be a desingularization of $D_t:=\{F_t=0\}\subset\P^{(1,v)}$ where $F_t$ is the quasi-homogenization of $f_t$.

\begin{definition}
\label{def:GHC0107}
Fix $t_0\in T$, we say that $\delta_{t_0}\in H_n(U_{t_0},\Q)$ is a \textbf{generic Hodge cycle} if $\delta_t$ is a Hodge cycle of $X_{t}$, this means, $\delta_t\in \Hod_n(X_t,\Q)_0$  for all $t\in T$ and $\delta_{t}$ is the monodromy of $\delta_{t_0}$ along a path on $T$ that connects $t_0$ to $t$. We will denote this space by $\GHod_n(X_{t_0},\Q)_0.$ 
\end{definition}

On the other hand, by Definition \ref{def:Hodgecycle2708}, equation \eqref{reduction} and Proposition \ref{inte}, we have the following subspace of $\GHod_n(X_{t_0},\Q)_0.$

\begin{definition}
\label{def:SGH0403}
Consider the $\Q$-vector space

\begin{equation}
\label{eq:defA}
\begin{split}
\mathcal{A}:&=\left\{(n_{\alpha,k})\in\Q^{|I|} \left| \sum_{\alpha\in J}n_{\alpha,k}\int_{\delta_{\alpha}^{-1}}res\left( \frac{x^{\beta'}dx'}{g}\right)=0,\;\forall\beta \text{  s.t. } A_{\beta}<\frac{n}{2} \right. \right\}\\
&=\left\{(n_{\alpha,k})\in\Q^{|I|} \left| \sum_{\alpha\in J}
n_{\alpha,k}\prod_{j=1}^{n}\zeta
_{m_j}^{\alpha_j(\beta_j+1)}=0,\;\forall\beta' \text{  s.t. } A_{\beta'}<\frac{n}{2}-\frac{1}{m}  \right. \right\},
  \end{split}
  \end{equation}
 where $I=J\times\ I_m=I_{m_1}\times\dots\times I_{m_n}\times I_m$, $\beta=(\beta',\beta_{n+1})$, and $A_{\beta}=\sum_{j=1}^{n+1}\frac{\beta_j+1}{m_j}$. To obtain the equality in \eqref{eq:defA} we use Proposition \ref{pro:fermat} and that
 $$\left\{\beta'\Big|A_{\beta'}<\frac{n}{2}-\frac{1}{m}\right\}= \left\{\beta'\Big|A_{\beta}<\frac{n}{2},\;\beta=(\beta',\beta_{n+1})\right\}.$$
 The space of \textbf{strong generic Hodge cycles} is the image of $\mathcal{A}$ under the natural map
    
\begin{equation}
\label{eq:natmap2909}
     \begin{array}{ccc}
       \mathcal{A} &  \longrightarrow   & \Hod_n(X_{t_0},\Q)_0 \\
      (n_{\alpha,k}) & \longmapsto & \left[\sum_{k=0}^{m-2}\sum_{\alpha\in J}n_{\alpha,k} \delta_k*\delta_{\alpha}^{-1}\right].
     \end{array}
\end{equation}
We denote this space by $\SHod_n(X_{t_0},\Q)_0\subset \GHod_n(X_{t_0},\Q)_0.$ 
\end{definition}


\begin{remark}
\label{rem:sep2208}
It is easy to see that 
    \begin{equation*}
\mathcal{A}\cong\left\{(n_{\alpha})\in\Q^{|J|} \left|  
\begin{aligned}
\sum_{\alpha\in J}
n_{\alpha}\prod_{j=1}^{n}\zeta
_{m_j}^{\alpha_j(\beta_j+1)}=0,\;\forall\beta' \text{  s.t. } A_{\beta'}<\frac{n}{2}-\frac{1}{m}
\end{aligned}
 \right. \right\}^{m-1}, 
\end{equation*}
and therefore $\delta=\sum_{k=0}^{m-2}\delta^k$ is a strong generic Hodge cycle if and only if $\{\delta^k\}_{k=0,\dots,m-2}$ are strong generic Hodge cycles, with $\delta^k=\sum_{\alpha\in J}n_{\alpha,k} \delta_k*\delta_{\alpha}^{-1}$.
\end{remark}

We already have the necessary ingredients to prove Theorem \ref{theo:main2708}.

\begin{proof}[Proof of Theorem \ref{theo:main2708}]
For the first part consider 

\begin{equation*}
\mathcal{A}_{m_n,m}:=\left\{(n_{j})\in\Q^{m_n-1} \left|  
\begin{aligned}
\sum_{j=0}^{m_n-2}
n_{j}\zeta_{m_n}^{j(\beta_n+1)}=0,\;\forall\beta_n \text{  s.t. } \frac{\beta_n+1}{m_n}<\frac{1}{2}-\frac{1}{m}
\end{aligned}
 \right. \right\}. 
\end{equation*}
Note that in this case, with $g=x_1^2+\dots+x_{n-1}^2+x_n^{m_n}$, we have $\mathcal{A}=\mathcal{A}_{m_n,m}^{m-1}$. Thus, it is enough to prove that

\begin{equation*}
\mathcal{A}_{m_n,m} \cong \left\lbrace
\begin{array}{ll}
\Q & m_n \textup{ even,}\\
0  & m_n \textup{ odd.}\\
\end{array}
\right.
\end{equation*}
For this, consider 

\begin{equation}
\label{eq:S_mn}
 S_{m_n,m}:=\left\{1\leq e< m_n\left(\frac{1}{2}-\frac{1}{m}\right): \;e|m_n \right\}   
\end{equation}
and $Q_{(n_j)}(x)=\sum_{j=0}^{m_n-2}n_jx^j$. For each $(n_j)\in\mathcal{A}_{m_n,m}$ and $e\in S_{m_n,m}$ it is satisfied that $Q_{(n_j)}(\zeta^e)=0$ because $\frac{e}{m_n}<\frac{1}{2}-\frac{1}{m}$. This means for each $(n_j)\in\mathcal{A}_{m_n,m}$ we have 

\begin{equation*}
    \prod_{e\in S_{m_n,m}}\Phi_{m_n/e}(x)\bigg|Q_{(n_j)}(x),
\end{equation*}
where $\Phi_k$ is $k$th cyclotomic polynomial. The above implies that $\mathcal{A}_{m_n,m}\cong\Q^{m_n-1-N_{m_n,m}}$ with $N_{m_n,m}:=\sum_{e\in S_{m_n,m}}\varphi\left(\frac{m_n}{e}\right)$ and $\varphi$ is the Euler's totient function, via the isomorphism

\begin{equation*}
 \begin{array}{ccc}
     \mathcal{A}_{m_n,m} & \stackrel{\cong}{\longrightarrow}  & \Q[x]_{m_n-2-N_{m_n,m}} \\
     (n_j) & \longmapsto & \frac{Q_{(n_j)}(x)}{\prod_{e\in S_{m_n,m}}\Phi_{\frac{m_n}{e}}(x)}.
\end{array}
\end{equation*}
On the other hand, note that for $m\geq7$, we have that $S_{m_n,m}=\left\{1\leq e<\frac{m_n}{2}: \;e|m_n \right\}$, and using that $\sum_{e|m_n}\varphi(m_n/e)=m_n$ we get

\begin{equation*}
N_{m_n,m} = \left\lbrace
\begin{array}{ll}
m_n-2 & m_n \textup{ even,}\\
m_n-1  & m_n \textup{ odd.}\\
\end{array}
\right.
\end{equation*}
This allows us to conclude the proof in the first case.

The idea of the proof of the second case is similar in spirit to the first case.  Consider 

\begin{equation*}
\mathcal{B}_{m_{n-1},m_n,m}:=\left\{(n_{j})\in\Q^{m_n-1} \left|  
\begin{aligned}
\sum_{j=0}^{m_n-2}
n_{j}\zeta_{m_n}^{j(\beta_n+1)}=0,\;\forall\beta_n \text{  s.t. } \frac{\beta_n+1}{m_n}<1-\frac{1}{m}-\frac{1}{m_{n-1}}
\end{aligned}
 \right. \right\}, 
\end{equation*}

\begin{equation*}
\mathcal{A}_{m_{n-1},m_n,m}:=\left\{(n_{ij})\in\Q^{(m_{n-1}-1)(m_n-1)} \left| \begin{aligned}
&\sum_{i=0}^{m_{n-1}-2}\sum_{j=0}^{m_n-2}
n_{ij}\zeta_{m_{n-1}}^{i(\beta_{n-1}+1)}\zeta_{m_n}^{j(\beta_n+1)}=0,\\
&\forall (\beta_{n-1},\beta_n) \text{  s.t. }\frac{\beta_{n-1}+1}{m_{n-1}}+\frac{\beta_{n}+1}{m_n}<1-\frac{1}{m}
\end{aligned}
 \right. \right\}.
\end{equation*}
Note that in this case, with $g=x_1^2+\dots+x_{n-2}^2+x_{n-1}^{m_{n-1}}+x_n^{m_n}$, we have $\mathcal{A}=\mathcal{A}_{m_{n-1},m_n,m}^{m-1}$. As $\gcd(m_{n-1},m_n)=1$ and $m_{n-1}$ is a prime number, we have $[\Q(\zeta_{m_{n-1}},\zeta_{m_n}):\Q(\zeta_{m_n})]=m_{n-1}-1$ and therefore the $\Q(\zeta_{m_n})$-vector space $\Q(\zeta_{m_{n-1}},\zeta_{m_n})$ has basis $\{ 1,\zeta_{m_{n-1}},\dots,\zeta_{m_{n-1}}^{m_{n-1}-2}\}$. This implies that 

\begin{equation*}
\sum_{i=0}^{m_{n-1}-2}\sum_{j=0}^{m_n-2}
n_{ij}\zeta_{m_{n-1}}^{i(\beta_{n-1}+1)}\zeta_{m_n}^{j(\beta_n+1)}=0	\Longleftrightarrow \sum_{j=0}^{m_n-2}
n_{ij}\zeta_{m_n}^{j(\beta_n+1)}=0 \textrm{  for each  } 0\leq i\leq m_{n-1}-2.
\end{equation*}
Therefore 

\begin{equation*}
\begin{split}
\mathcal{A}_{m_{n-1},m_n,m}&\cong\left\{(n_{ij})\in\Q^{(m_{n-1}-1)(m_n-1)}\left|
\begin{aligned}
    &(n_{ij})_{j=0}^{m_n-2}\in\mathcal{B}_{m_{n-1},m_n,m},\\
    &\textrm{  for every  } 0\leq i\leq m_{n-1}-2 
\end{aligned}
\right.\right\}\\
&\cong(\mathcal{B}_{m_{n-1},m_{n},m})^{m_{n-1}-1}.
\end{split}
\end{equation*}
From the above, it is enough to prove that $\mathcal{B}_{m_{n-1},m_n,m}=0,$ when $\frac{1}{m_{n-1}}+\frac{1}{m}<\frac{1}{2}$. For this, we proceed exactly the same as in the first case. 

It remains for us to prove the third case. For this consider

\begin{equation*}
\mathcal{A}_{(m_j)}\cong\left\{(n_{\alpha})\in\Q^{|J|} \left|  
\begin{aligned}
\sum_{\alpha\in J}
n_{\alpha}\prod_{j=1}^{n}\zeta_{m_j}^{\alpha_j(\beta_j+1)}=0,\;\forall\beta' \text{  s.t. } A_{\beta'}<\frac{n}{2}-\frac{1}{m}
\end{aligned}
 \right. \right\}, 
\end{equation*}
where $J=\prod_{j=1}^n I_{m_j}$, $I_{m_j}=\{0,1,\dots,m_j-2 \}$ and $A_{\beta'}=\sum_{j=1}^{n}\frac{\beta_j+1}{m_j}$. Observe that $\mathcal{A}=\mathcal{A}_{(m_j)}^{m-1}$. It is sufficient to show that $\mathcal{A}_{(m_j)}=0$. As $m_1,\dots,m_n$ are different prime numbers, we have 

$$\Q(\zeta_{m_1},\dots,\zeta_{m_n})\cong\Q(\zeta_{\prod m_j}) \textrm{ and } [\Q(\zeta_{\prod m_j}):\Q]=\prod_{j=1}^n (m_j-1).$$ 
Moreover note that if $\alpha\neq\hat{\alpha}$, then $\prod_{j=1}^{n}\zeta_{m_j}^{\alpha_j}\neq\prod_{j=1}^{n}\zeta_{m_j}^{\hat{\alpha}_j}$. Therefore the $\Q$-vector space $\Q(\zeta_{m_1},\dots,\zeta_{m_n})$ has basis $\left\{\prod_{j=1}^{n}\zeta_{m_j}^{\alpha_j}\right\}_{\alpha\in J}$. This implies that 

\begin{equation*}
\begin{split}
\sum_{\alpha\in J}n_\alpha\prod_{j=1}^n\zeta_{m_j}^{\alpha_j}=0 &\Longleftrightarrow n_\alpha=0 \;\textrm{ for every } \alpha\in J, 
\end{split}
\end{equation*}
but the above is one of the conditions that satisfy the elements of $\mathcal{A}_{(m_j)}$, with $\beta'=(0,\dots,0)$.

\end{proof}

The proof of the third part of Theorem \ref{theo:main2708} allows us to deduce

\begin{cor}
Let $X$ be a desingularization of the weighted hypersurface $D$ given by the quasi-homogenization $F$ of $f=g(x)+P(y)$, where $g(x)=x_1^{m_1}+\dots +x_{n}^{m_n}$ and $P(y)=y^{m_{n+1}}+1$. If $m_j$, $j=1,\dots,n+1,$ are different prime numbers, then $\Hod_n(X,\Q)_0=0$.
\end{cor}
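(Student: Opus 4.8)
The plan is to reduce the statement to the third case of Theorem~\ref{theo:main2708} by exhibiting a Fermat hypersurface with $n+1$ exponents whose strong generic Hodge cycles are all trivial, and then to upgrade this to the full primitive Hodge space. The key observation is that when $P(y)=y^{m_{n+1}}+1$, the hypersurface $D$ is (a desingularization of) the weighted Fermat variety with exponents $m_1,\dots,m_{n+1}$, and its primitive cohomology is spanned by the forms $\res(\omega_\beta/f^j)$ with $A_\beta\le j$. Accordingly, a basis of $H_n(U,\Q)$ is given by the joint cycles $\delta_k*\delta_\alpha^{-1}$ of Theorem~\ref{thm:evanescent}, and the primitive Hodge condition of Definition~\ref{def:Hodgecycle2708} becomes a system of linear equations in the coefficients $(n_{\alpha,k})$ involving the Fermat periods $\int_{\delta_\alpha^{-1}}\res(x^{\beta'}dx'/g)$.

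First I would note that, because $P(y)=y^{m_{n+1}}+1$ has no lower-order terms, the pole-increment formula \eqref{eq:increase2001} collapses: each $\omega_\beta/f^k$ with $A_\beta>k$ rewrites purely in terms of $\omega_{\beta+(0',\,m_{n+1}k_j)}/f^j$, and more to the point the Hodge/cohomology structure is exactly that of the $(n+1)$-variable Fermat $x_1^{m_1}+\dots+x_{n+1}^{m_{n+1}}+1$. Concretely I would argue that $\Hod_n(X,\Q)_0$ is carried by the same combinatorial data as $\mathcal{A}_{(m_j)_{j=1}^{n+1}}$, i.e.\ the condition reads $\sum_{\alpha\in J'}n_\alpha\prod_{j=1}^{n+1}\zeta_{m_j}^{\alpha_j(\beta_j+1)}=0$ for all multi-indices $\beta$ with $A_\beta<\tfrac{n}{2}$ ranging over the appropriate box $J'=\prod_{j=1}^{n+1}I_{m_j}$. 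This is the same shape of system treated in the third case of the theorem, just with $n$ replaced by $n+1$ in the product.

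Then I would invoke the arithmetic input from that proof verbatim: since $m_1,\dots,m_{n+1}$ are distinct primes, $\Q(\zeta_{m_1},\dots,\zeta_{m_{n+1}})\cong\Q(\zeta_{\prod m_j})$ has degree $\prod_{j=1}^{n+1}(m_j-1)$ over $\Q$, and the monomials $\left\{\prod_{j=1}^{n+1}\zeta_{m_j}^{\alpha_j}\right\}_{\alpha\in J'}$ are $\Q$-linearly independent (they are distinct roots of unity whose orders are pairwise coprime, and the count $\prod(m_j-1)$ matches the degree). Taking $\beta=(0,\dots,0)$ — which satisfies $A_\beta=\sum 1/m_j$, and one checks $\sum_{j=1}^{n+1}1/m_j<\tfrac{n}{2}$ since each $1/m_j\le 1/2$ and $n$ is even, with the inequality strict because $n\ge 2$ forces at least one slack — already forces every coefficient $n_\alpha$ to vanish. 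Hence the numerator space in Definition~\ref{def:Hodgecycle2708} is zero and $\Hod_n(X,\Q)_0=0$.

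The main obstacle I anticipate is the bookkeeping in the first step: justifying carefully that, for $P(y)=y^{m_{n+1}}+1$, Definition~\ref{def:Hodgecycle2708} genuinely reduces to the Fermat linear system on $J'=\prod_{j=1}^{n+1}I_{m_j}$ rather than the quotient-by-infinity appearing in the general definition. This requires checking that the denominator space (forms with $A_\beta<j$ for $1\le j\le n+1$) and the numerator space (forms with $A_\beta<j$ for $1\le j\le \tfrac n2$) interact with the periods via Proposition~\ref{pro:fermat} and the pole-reduction/increment machinery of \S\ref{sec:poleinc1803} exactly as in the weighted Fermat case, so that Steenbrink's identification applies and one lands on the system governed by $\mathcal{A}_{(m_j)_{j=1}^{n+1}}$. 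Once that translation is in place, the number-theoretic conclusion is immediate from the argument already given for part~iii of Theorem~\ref{theo:main2708}. It is worth remarking that here we do not even need the ``$<\tfrac n2$'' restriction to be nontrivial: the single test vector $\beta=0$ kills everything, which is why the hypothesis ``$m_j$ distinct primes for $j=1,\dots,n+1$'' suffices without any further size condition on $m_{n+1}$.
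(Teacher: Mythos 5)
Your proposal follows essentially the same route as the paper's proof: specialize to $f=x_1^{m_1}+\dots+x_n^{m_n}+y^{m_{n+1}}+1$, observe that the pole reduction/increment machinery collapses so that Definition \ref{def:Hodgecycle2708} turns into the Fermat linear system $\sum_{\alpha}n_{\alpha}\prod_{j=1}^{n+1}\zeta_{m_j}^{\alpha_j(\beta_j+1)}=0$ over the multi-indices $\beta$ with $A_\beta<\frac{n}{2}$ (and $A_\beta\notin\N$), and then use that for pairwise distinct primes the monomials $\prod_{j=1}^{n+1}\zeta_{m_j}^{\alpha_j}$ are $\Q$-linearly independent, so the single test vector $\beta=(0,\dots,0)$ forces all $n_\alpha=0$. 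The ``bookkeeping'' you defer is exactly what the paper supplies: for this $f$ the reduction formula \eqref{reduction} degenerates to $\left[\omega_\beta/f^j\right]=C_j\left[\omega_\beta/f\right]$ with $C_j\in\Q$ (and $C_j=0$ when $A_\beta\in\N$, $A_\beta<j$), which is what legitimizes the passage to the system $\mathcal{A}_{(m_j)}$ on $\prod_{j=1}^{n+1}I_{m_j}$.

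One concrete caveat: your verification that $\beta=0$ is an admissible test vector, i.e.\ that $\sum_{j=1}^{n+1}\frac{1}{m_j}<\frac{n}{2}$, does not follow from ``each $1/m_j\le 1/2$ and $n$ even'' --- that only gives the bound $\frac{n+1}{2}$. Since the $m_j$ are distinct primes, at most one equals $2$, so $\sum_{j=1}^{n+1}\frac{1}{m_j}\le\frac{1}{2}+\frac{n}{3}$, which is indeed less than $\frac{n}{2}$ for all $n\ge 4$; but for $n=2$ the inequality fails for $(m_1,m_2,m_3)=(2,3,5)$, where $A_{(0,0,0)}=\frac{31}{30}>1$ and in fact no $\beta$ satisfies $A_\beta<1$, so the linear system is vacuous and the argument cannot conclude. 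This boundary case is also passed over silently in the paper's own proof, so apart from repairing (or excluding) it, your argument matches the paper's.
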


\begin{proof}
Let us consider $t_0=(1,0,\dots,0,1)\in T$. In this case $f_{t_0}=x_1^{m_1}+\dots +x_{n}^{m_n}+y^{m_{n+1}}+1$. In this particular case, we can rewrite equation \eqref{reduction} as
\begin{equation*}
    \left[\frac{\omega_\beta}{f^j}\right]=C_j\left[\frac{\omega_\beta}{f}\right],
\end{equation*}
with $C_j\in\Q.$ If $A_\beta\in\N$ and $A_\beta<j,$ then $C_j=0$. The last  equation up to a nonzero constant is equal to \cite[equation (11.13)]{Mov}. With this and using Proposition \ref{prop:basis2608} and \ref{pro:fermat}, we have in the definition of Hodge cycles (see Definition \ref{def:Hodgecycle2708})

$$\left\{\delta\in H_n(U_{t_0},\Q)\Big| \int_{\delta}res\left(\frac{\omega_\beta}{f_{t_0}^j}\right)=0, A_\beta<j, 1\leq j\leq\frac{n}{2} \right\}\cong \mathcal{A}_{(m_j)},$$
where
\begin{equation*}
\mathcal{A}_{(m_j)}\cong\left\{(n_{\alpha})\in\Q^{|I|} \left|  
\begin{aligned}
\sum_{\alpha\in I}
n_{\alpha}\prod_{j=1}^{n+1}\zeta_{m_j}^{\alpha_j(\beta_j+1)}=0,\;\forall\beta \text{  s.t. } A_{\beta}<\frac{n}{2}, A_{\beta}\notin\N
\end{aligned}
 \right. \right\}, 
\end{equation*}
with $I=\prod_{j=1}^{n+1} I_{m_j}$, $I_{m_j}=\{0,1,\dots,m_j-2 \}$ and $A_{\beta}=\sum_{j=1}^{n+1}\frac{\beta_j+1}{m_j}$. Similarly, as in the third part of the proof above, we have $\mathcal{A}_{(m_j)}=0,$ when $m_1,\dots,m_{n+1}$ are different prime numbers. In conclusion $\Hod_n(X_{t_0},\Q)_0=0$, where $X_{t_0}$ is  a desingularization of the weighted hypersurface $D_{t_0}$ given by the quasi-homogenization $F_{t_0}$ of $f_{t_0}$ and $m_j$, $j=1,\dots,n+1$ are different prime numbers. 
\end{proof}

The proof of Theorem \ref{theo:main2708} provides a method to calculate the rank of $\mathcal{A}_{m_n,m}$ and its generators when $m<7$ as well as provides a method to calculate the rank of $\mathcal{B}_{m_{n-1},m_n,m}$ and its generators when $\frac{1}{m_{n-1}}+\frac{1}{m}\geq\frac{1}{2}$. With this, we obtain Corollary \ref{cor:dimpaper2808}.

\begin{proof}[Proof of Corollary \ref{cor:dimpaper2808}]
With the notations of the previous proof,
we know that $\mathcal{A}=\mathcal{A}_{m_n,m}^{m-1}$ and $\mathcal{A}_{m_n,m}\cong\Q^{m_n-1-N_{m_n,m}}$ with $N_{m_n,m}:=\sum_{e\in S_{m_n,m}}\varphi\left(\frac{m_n}{e}\right),$ where $S_{m_n,n}$ is defined in (\ref{eq:S_mn}). Observe that 

$$N_{m_n,m}=m_n-\sum_{\substack{e\notin S_{m_n,m}\\ e|m_n}}
\varphi\left(\frac{m_n}{e}\right).$$
Therefore 

\begin{equation*}
    \begin{split}
\dim(\mathcal{A})&=(m-1)\left(\sum_{\substack{e\notin S_{m_n,m}\\e|m_n}}\varphi\left(\frac{m_n}{e}\right)-1 \right)=(m-1)\left(\sum_{\substack{m_n(\frac{1}{2}-\frac{1}{m})\leq e<m_n\\e|m_n}}\varphi\left(\frac{m_n}{e}\right) \right)\\        
&=(m-1)\left(\sum_{\substack{2\leq d\leq\frac{2m}{m-2}\\ d|m_n}}\varphi\left(d\right) \right),
    \end{split}
\end{equation*}
where $d=\frac{m_n}{e}.$ This proves the first part. For the second part, we know from the proof of Theorem \ref{theo:main2708} that $\mathcal{A}\cong\mathcal{B}_{m_{n-1},m_n,m}^{(m-1)(m_{n-1}-1)}$ and $\mathcal{B}_{m_{n-1},m_n,m}\cong\Q^{m_n-1-N_{m_{n-1},m_n,m}}$ with $N_{m_{n-1},m_n,m}:=\sum_{e\in S_{m_{n-1},m_n,m}}\varphi\left(\frac{m_n}{e}\right),$ where 

\begin{equation*}
\label{eq:S_mn1}
S_{m_{n-1},m_n,m}:=\left\{1\leq e< m_n\left(1-\frac{1}{m_{n-1}}-\frac{1}{m}\right); \;e|m_n\right\}. 
\end{equation*}
With this, we proceed as in the first part. The second part is similar to the first part.
\end{proof}

\subsection{Hodge numbers}
\label{sec:HN}
With the notations from the previous sections, let us consider $t_0=(1,0,\dots,0,1)\in T$. In this case $f_{t_0}=x_1^{m_1}+\dots +x_{n}^{m_n}+y^{m_{n+1}}+1$. The affine Fermat variety $\{f_{t_0}=0\}$ has a sequence of numbers related to the Hodge numbers of the compact smooth underlying variety $X_{t_0}$. Namely

\begin{equation*}
    h_0^{k-1,n-k+1}:=\#\{\beta\in I|\;k-1<A_\beta<k\}.
\end{equation*}
These number are symmetric, $h_0^{k-1,n-k+1}=h_0^{n-k+1,k-1}$, since the set $I$ is invariant under the transformation

$$\beta\longrightarrow\ m-\beta-2:=(m_1-\beta_1-2,\dots,m_{n+1}-\beta_{n+1}-2), $$
and therefore $A_{m-\beta-2}=n+1-A_\beta.$ These numbers satisfy $h^{p,q}=h_0^{p,q}$, for $p\neq q$, where $h^{p,q}=\dim H^{p,q}(X_{t_0}).$ In the remaining case $h^{\frac{n}{2},\frac{n}{2}}-h_0^{\frac{n}{2},\frac{n}{2}}$ depends on the desingularization of $D_{t_0}.$ For more details see \cite[\S 15.4]{Mov} and the references therein. The Hodge numbers do not change when the complex structure varies continuously. More precisely

\begin{theorem}
Let $\pi:\mathcal{X}\longrightarrow B$ be a family of complex manifolds (i.e. $\pi$ is proper and submersive) and assume that $\mathcal{X}_0$ is K\"ahler for some $0\in B$. Then for $b$ in a neighborhood of $0$ in B, the Hodge numbers of $\mathcal{X}_b$ are the same as the Hodge numbers of $\mathcal{X}_0$.  
\end{theorem}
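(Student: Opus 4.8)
The plan is to combine three classical facts: the topological local triviality of a proper submersion, the openness of the K\"ahler condition, and the upper semicontinuity of cohomology dimensions in a proper family. First I would use Ehresmann's fibration theorem: since $\pi$ is proper and submersive, it is a locally trivial $C^{\infty}$ fibration over a neighbourhood of $0$, so the Betti numbers $b_k(\mathcal{X}_b)=\dim_{\C}H^k(\mathcal{X}_b,\C)$ are independent of $b$ for $b$ near $0$.

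Next I would check that $\mathcal{X}_b$ stays K\"ahler for $b$ in a (possibly smaller) neighbourhood of $0$. Starting from a K\"ahler form $\omega_0$ on $\mathcal{X}_0$, one extends it to a $d$-closed real $(1,1)$-form on a neighbourhood of $\mathcal{X}_0$ inside $\mathcal{X}$ (for instance by extending the de Rham class and correcting by $\partial\bar\partial$ of local potentials, using the $\partial\bar\partial$-lemma on the compact K\"ahler manifold $\mathcal{X}_0$); since positivity is an open condition, the restriction of this form to $\mathcal{X}_b$ is K\"ahler for $b$ close to $0$. Hence the Hodge decomposition
\[
H^k(\mathcal{X}_b,\C)=\bigoplus_{p+q=k}H^q\!\left(\mathcal{X}_b,\Omega^p_{\mathcal{X}_b}\right)
\]
holds for such $b$, and therefore $b_k(\mathcal{X}_b)=\sum_{p+q=k}h^{p,q}(\mathcal{X}_b)$, where $h^{p,q}(\mathcal{X}_b):=\dim H^q(\mathcal{X}_b,\Omega^p_{\mathcal{X}_b})$.

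Finally I would invoke semicontinuity. Because $\pi$ is submersive, the sheaf of relative holomorphic $p$-forms $\Omega^p_{\mathcal{X}/B}$ is locally free, hence flat over $B$, and $\pi$ is proper; Grauert's upper semicontinuity theorem then gives that $b\mapsto h^{p,q}(\mathcal{X}_b)$ is upper semicontinuous near $0$. Combining this with the first step, for each fixed $k$ the sum $\sum_{p+q=k}h^{p,q}(\mathcal{X}_b)=b_k(\mathcal{X}_b)$ is constant near $0$ while each summand is upper semicontinuous and integer valued; such a sum can only remain constant if every summand is locally constant. This yields $h^{p,q}(\mathcal{X}_b)=h^{p,q}(\mathcal{X}_0)$ for $b$ in a neighbourhood of $0$.

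The step I expect to be the main obstacle is the openness of the K\"ahler condition: one must genuinely produce a K\"ahler form on the nearby fibres --- which is where the $\partial\bar\partial$-lemma and the compactness coming from properness enter --- rather than merely quote that ``K\"ahler is open''. Once that is in hand, the conclusion is a formal consequence of Ehresmann's theorem, the Hodge decomposition for compact K\"ahler manifolds, and Grauert's semicontinuity theorem.
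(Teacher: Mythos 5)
The paper itself offers no argument here --- it simply cites \cite[Proposition 9.20]{voisin2002hodge} --- so the relevant comparison is with Voisin's proof. Your overall architecture coincides with hers at the two ends: Ehresmann's theorem gives constancy of the Betti numbers $b_k(\mathcal{X}_b)$, and Grauert/Kodaira--Spencer semicontinuity gives $h^{p,q}(\mathcal{X}_b)\leq h^{p,q}(\mathcal{X}_0)$ near $0$, after which the ``constant sum of upper semicontinuous integer-valued terms'' argument forces each $h^{p,q}$ to be constant. Where you diverge is the middle step. You obtain the equality $\sum_{p+q=k}h^{p,q}(\mathcal{X}_b)=b_k(\mathcal{X}_b)$ by first proving that the nearby fibres are K\"ahler. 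Voisin instead uses only the \emph{inequality} $\sum_{p+q=k}h^{p,q}(\mathcal{X}_b)\geq b_k(\mathcal{X}_b)$, valid for every compact complex manifold via the Fr\"olicher spectral sequence; the chain $b_k(\mathcal{X}_0)=b_k(\mathcal{X}_b)\leq\sum h^{p,q}(\mathcal{X}_b)\leq\sum h^{p,q}(\mathcal{X}_0)=b_k(\mathcal{X}_0)$ then closes up without ever knowing that $\mathcal{X}_b$ is K\"ahler. Her route is strictly cheaper; yours, if completed, proves more (openness of the K\"ahler condition), but that extra content is not needed for the statement.

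The step you flag as the main obstacle is indeed a genuine gap as sketched. Extending $\omega_0$ to a closed real form on $\mathcal{X}$ and restricting to $\mathcal{X}_b$ does not produce a K\"ahler form on $\mathcal{X}_b$: the complex structure on the fibre has changed, so the restriction is in general no longer of type $(1,1)$ for $\mathcal{X}_b$, and correcting it by $\partial\bar\partial$ of local potentials on $\mathcal{X}_b$ cannot be justified by the $\partial\bar\partial$-lemma on $\mathcal{X}_0$. The actual Kodaira--Spencer stability theorem requires showing that the harmonic projectors (for a smoothly varying family of metrics) depend continuously on $b$, then projecting the transported form onto the harmonic $(1,1)$-part for the structure on $\mathcal{X}_b$ and using openness of positivity. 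That is a substantially harder theorem than the one you are trying to prove, which is precisely why the standard proof routes around it through the Fr\"olicher inequality. I would recommend replacing your second step by that inequality; everything else in your argument then goes through unchanged.
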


\begin{proof}
See \cite[Proposition 9.20]{voisin2002hodge}. 
\end{proof}

The above theorem implies that the numbers $h_0^{k-1,n-k+1}$ are the same for every $t\in T$ in the fibration $\pi:\mathcal{U}\longrightarrow T,$ see also \cite[\S 7]{movasati2007mixed} and the references therein. 

Let $X$ be a desingularization of the weighted hypersurface $D$ given by the quasi homogenization $F$ of $f=g(x)+P(y)$, where  $g(x)=x_{1}^{m_{1}}+\dots+ x_{n_1}^{m_{n_1}}+x_{n_1+1}^2+x_{n_1+2}^2+\dots+x_{n_1+k_1}^2$ and $P(y)$ is a fixed polynomial of degree $m$. Here $n=n_1+k_1$. Observe that $\frac{k_1}{2}<A_\beta$ and this implies that $h_0^{k-1,n-k+1}=0=h_0^{n-k+1,k-1}$ for $k\leq\lfloor\frac{k_1}{2}\rfloor$. For example if $g=x_1^2+\dots+x_{n-1}^{2}+x_{n}^{m_n}$ or $g=x_1^2+\dots+x_{n-2}^2+x_{n-1}^{m_{n-1}}+x_{n}^{m_n}$, the sequence of numbers are surface-like:

$$0,\dots,0,h_0^{\frac{n}{2}+1,\frac{n}{2}-1},h_0^{\frac{n}{2},\frac{n}{2}},h_0^{\frac{n}{2}-1,\frac{n}{2}+1},0,\dots,0.$$
In other words, the Hodge structure of $H^n(X,\Z)$ has level $2$. Further, if $\mathbf{h}_0^{2,0},\mathbf{h}_0^{1,1}$ are the corresponding Hodge numbers for $g_2=x_1^2+x_2^{m_n}$ or $g_2=x_1^{m_{n-1}}+x_2^{m_n}$ respectively, we have $\mathbf{h}_0^{2,0}=h_0^{\frac{n}{2}+1,\frac{n}{2}-1}$ and $\mathbf{h}_0^{1,1}=h_0^{\frac{n}{2},\frac{n}{2}}.$

The variety induced by $f=g(x)+P(y)$, where $g=x_1^2+\dots+x_{n-1}^{2}+x_{n}^{m_n}$ and $P(y)=y(1-y)(\lambda-y)$, will be used constantly in the next section. In the following corollary, we calculate their Hodge numbers: $h_0^{\frac{n}{2}+1,\frac{n}{2}-1},$ $h_0^{\frac{n}{2}-1,\frac{n}{2}+1}$ and $h_0^{\frac{n}{2},\frac{n}{2}}.$ 

\begin{cor}
Let $X$ be a desingularization of the weighted hypersurface $D$ given by the  quasi-homogenization $F$ of $f=g(x)+P(y)$, where $g=x_1^2+\dots+x_{n-1}^{2}+x_{n}^{m_n}$ and $P(y)$ is a polynomial of degree $3$. We have

\begin{equation*}
h_0^{\frac{n}{2}+1,\frac{n}{2}-1}=h_0^{\frac{n}{2}-1,\frac{n}{2}+1} = \left\lfloor \frac{m_n-1}{6} \right\rfloor ,
\end{equation*}

\begin{equation*}
h_0^{\frac{n}{2},\frac{n}{2}} = m_n-2+ \left\lceil\frac{5m_n}{6} \right\rceil -  \left\lfloor\frac{m_n}{6} \right\rfloor .  
\end{equation*}
\end{cor}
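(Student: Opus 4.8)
The plan is to count the relevant combinatorial sets directly from the definition $h_0^{k-1,n-k+1} := \#\{\beta \in I \mid k-1 < A_\beta < k\}$, exploiting the surface-like reduction explained just above the corollary. First I would invoke the fact, established in the preceding paragraphs, that for $g = x_1^2 + \dots + x_{n-1}^2 + x_n^{m_n}$ the Hodge structure has level $2$, so the only nonzero numbers among $h_0^{k-1,n-k+1}$ are the three displayed ones, and moreover $\mathbf{h}_0^{2,0} = h_0^{\frac{n}{2}+1,\frac{n}{2}-1}$ and $\mathbf{h}_0^{1,1} = h_0^{\frac{n}{2},\frac{n}{2}}$, where $\mathbf{h}_0^{p,q}$ are the analogous numbers for the curve case $g_2 = x_1^2 + x_2^{m_n}$, $P(y)$ of degree $3$. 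This reduces everything to an explicit count over $I = I_2 \times I_3 = \{0\} \times \{0,1\}$ for the two ``non-square" variables, i.e. over pairs $(\beta_n, \beta_{n+1})$ with $0 \le \beta_n \le m_n - 2$ and $0 \le \beta_{n+1} \le 1$, with $A_\beta = \frac{1}{2} + \frac{\beta_n+1}{m_n} + \frac{\beta_{n+1}+1}{3}$ (the $n-1$ square variables contribute the constant $\tfrac{n-1}{2}$, which I subtract off to land in the curve normalization).

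The core of the argument is then two elementary lattice-point counts. For $h_0^{\frac{n}{2}+1,\frac{n}{2}-1} = \mathbf{h}_0^{2,0}$ I would count pairs $(\beta_n,\beta_{n+1})$ with $1 < \frac{\beta_n+1}{m_n} + \frac{\beta_{n+1}+1}{3} < 2$ lying strictly in the top slice; writing $s = \frac{\beta_n+1}{m_n} \in (0,1)$ and noting $\frac{\beta_{n+1}+1}{3} \in \{\tfrac13, \tfrac23\}$, the condition becomes a union of two intervals for $\beta_n$, and summing $\#\{\beta_n : \frac{\beta_n+1}{m_n} \in (\tfrac13, \tfrac23)\}$ with $\#\{\beta_n : \frac{\beta_n+1}{m_n} \in (\tfrac23,1)\}$ and simplifying gives $\lfloor \frac{m_n-1}{6}\rfloor$; a short case analysis modulo $6$ confirms the closed form. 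For $h_0^{\frac{n}{2},\frac{n}{2}} = \mathbf{h}_0^{1,1}$ I would count the pairs in the middle slice, i.e. with $\frac{\beta_n+1}{m_n} + \frac{\beta_{n+1}+1}{3} \in \big(\tfrac12, \tfrac32\big)$ (after the shift by $\tfrac12$ coming from the extra square variable relative to the pure surface picture — here I must be careful that for the curve normalization the middle Hodge number corresponds to $\frac12 < A_\beta^{\text{curve}} < 1$ shifted appropriately, so I will track the index bookkeeping precisely against the definition of $h_0^{\frac n2,\frac n2}$). This again splits by $\beta_{n+1} \in \{0,1\}$ into counts of $\beta_n$ with $\frac{\beta_n+1}{m_n}$ in explicit intervals, producing terms $\lceil \frac{5m_n}{6}\rceil$, $\lfloor \frac{m_n}{6}\rfloor$ and a linear term; collecting them yields $m_n - 2 + \lceil\frac{5m_n}{6}\rceil - \lfloor\frac{m_n}{6}\rfloor$. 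Symmetry $h_0^{p,q} = h_0^{q,p}$, already noted in the text via $\beta \mapsto m - \beta - 2$, gives the first equality $h_0^{\frac n2+1,\frac n2-1} = h_0^{\frac n2-1,\frac n2+1}$ for free.

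The main obstacle I anticipate is not conceptual but bookkeeping: getting the floor/ceiling expressions exactly right requires a careful residue-class analysis of $m_n \bmod 6$ (and checking the boundary cases where $\frac{\beta_n+1}{m_n}$ could equal $\tfrac13$ or $\tfrac23$ exactly, which happens precisely when $3 \mid m_n$ and must be excluded since $A_\beta \notin \mathbb{N}$ is needed for the strict inequalities — one should double-check that such $\beta$ never satisfies $A_\beta \in \mathbb{Z}$, which holds here since the half-integer $\tfrac{n-1}{2}$ contribution combined with a rational $\tfrac13$ or $\tfrac23$ piece is never an integer). Once the interval endpoints are pinned down, each sum $\#\{0 \le \beta_n \le m_n-2 : a < \frac{\beta_n+1}{m_n} < b\} = \lceil b\,m_n \rceil - \lfloor a\,m_n\rfloor - 1$ (with the obvious adjustments when $a\,m_n$ or $b\,m_n$ is an integer) is routine, and assembling them gives the stated formulas. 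I would organize the proof as: (1) reduce to the curve count via the level-2 structure and the shift; (2) count the top slice, simplify to $\lfloor\frac{m_n-1}{6}\rfloor$; (3) count the middle slice, simplify to $m_n - 2 + \lceil\frac{5m_n}{6}\rceil - \lfloor\frac{m_n}{6}\rfloor$; (4) invoke symmetry for the remaining equality.
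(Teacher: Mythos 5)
Your overall strategy is the same as the paper's: reduce to the curve case $n=2$ via the surface-like/level-$2$ structure established in \S\ref{sec:HN} (so that $\mathbf{h}_0^{2,0}=h_0^{\frac n2+1,\frac n2-1}$ and $\mathbf{h}_0^{1,1}=h_0^{\frac n2,\frac n2}$), then count lattice points by splitting over $\beta_{n+1}\in\{0,1\}$. Your middle-slice computation is exactly the paper's: the condition $\frac12<\frac{\beta_n+1}{m_n}+\frac{\beta_{n+1}+1}{3}<\frac32$ splits into $\frac{\beta_n+1}{m_n}\in(\frac16,1)$ for $\beta_{n+1}=0$ and $\frac{\beta_n+1}{m_n}\in(0,\frac56)$ for $\beta_{n+1}=1$, giving $\bigl(m_n-1-\lfloor\frac{m_n}{6}\rfloor\bigr)+\bigl(\lceil\frac{5m_n}{6}\rceil-1\bigr)$.

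However, your top-slice count as written would fail. Writing $s=\frac{\beta_n+1}{m_n}$ and $u=\frac{\beta_{n+1}+1}{3}$, the defining condition for $h_0^{\frac n2+1,\frac n2-1}$ is $\frac n2+1<A_\beta<\frac n2+2$, and since the $n-1$ square variables contribute $\frac{n-1}{2}$ to $A_\beta$, this is $\frac32<s+u<\frac52$ --- not $1<s+u<2$ as you state. Since $s<1$ and $u\le\frac23$, the upper bound is automatic and the condition forces $u=\frac23$ (i.e.\ $\beta_{n+1}=1$) together with $s>\frac56$, a \emph{single} interval contributing $m_n-1-\lfloor\frac{5m_n}{6}\rfloor=\lfloor\frac{m_n-1}{6}\rfloor$ elements. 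The two intervals you propose to sum, $s\in(\frac13,\frac23)$ and $s\in(\frac23,1)$, do not follow even from your own (shifted) condition and together contain roughly $\frac{2m_n}{3}$ values of $\beta_n$, which cannot simplify to $\lfloor\frac{m_n-1}{6}\rfloor$; so step (2) of your plan does not produce the stated formula without correcting the inequality. The rest (the symmetry $h_0^{p,q}=h_0^{q,p}$ via $\beta\mapsto m-\beta-2$, and the routine interval counts) is fine and matches the paper.
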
 

\begin{proof}
First, note that it is enough to prove it for the case $n=2$. Let us see one case, the other is analogous.

\begin{equation*}
\begin{split}
    h_0^{1,1}:=&\#\{\beta\in I|\;1<A_\beta<2\}\\
    =&\#\left\{\beta\in I\Big|\;\frac{1}{2}<\frac{\beta_2+1}{m_n}+\frac{\beta_3+1}{3}<\frac{3}{2}\right\}\\
    =&\#\left\{\beta_2\in I_{m_n}\Big|\;\frac{m_n}{2}<3(\beta_2+1)<3m_n\right\}+\#\left\{\beta_2\in I_{m_n}\Big|\;0<3(\beta_2+1)<\frac{5m_n}{2}\right\}\\
    =&m_n-1-  \left\lfloor\frac{m_n}{6} \right\rfloor+\left\lceil\frac{5m_n}{6} \right\rceil-1.
    \end{split}
\end{equation*}
\end{proof}

\subsection{Strong generic Hodge cycles and hypergeometric function}
In the rest of the document we will use $d$ instead of $m_n$. The proof of Theorem \ref{theo:main2708} provides us with a method to find strong generic Hodge cycles explicitly in the cases described. In the $2$-dimensional case, by Lefschetz $(1,1)$ theorem each Hodge cycle is algebraic. But the algebraic cycles satisfies

\begin{prop}[Deligne]
\label{prop:Deligne0109}
Let $X$ be a smooth projective variety. If $\delta\in H_m(X,\Q)$ is algebraic, then for every $\omega\in H^m_{dR}(X/k)$:

\begin{equation*}
    \frac{1}{(2\pi i)^{m/2}}\int_{\delta}\omega \in \overline{k}, 
\end{equation*}
where $X/k$ denotes the variety over a field $k\subset\C.$ 
\end{prop}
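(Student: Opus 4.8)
The plan is to reduce to the case $\delta=[Z]$ for a single irreducible subvariety defined over $\overline{k}$, and then to rewrite the period $\int_\delta\omega$ purely in terms of the algebraic de Rham cycle class and the de Rham trace map, both of which are $\overline{k}$-rational. Set $d=\dim_{\C}X$. An algebraic class in $H_m(X,\Q)$ is nonzero only when $m=2p$ is even, and in that case it is a $\Q$-combination of classes $[Z]$ of subvarieties $Z\subset X$ of complex dimension $p$, hence codimension $c=d-p$; by linearity of the integral it suffices to treat one such $[Z]$. To arrange that $Z$ is defined over $\overline{k}$ I would spread out: if $Z$ is defined over a finitely generated extension $K/k$, choose a geometrically connected $k$-variety $S$ with function field $K$ and a subvariety $\mathcal{Z}\subset X\times S$ whose generic fibre is $Z$; since $X$ itself is fixed, the Betti classes $[\mathcal{Z}_s]\in H^{2c}(X,\Q)$ are locally constant in $s$, hence constant on the connected base, so replacing $Z$ by the fibre over a point $s\in S(\overline{k})$ (which are Zariski dense) changes neither $\delta$ nor the period.

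I would then invoke two inputs. First, algebraic de Rham cohomology over $\overline{k}$ carries a cycle class $\mathrm{cl}_{dR}(Z)\in H^{2c}_{dR}(X/\overline{k})$, a cup product, and a trace isomorphism $\mathrm{Tr}\colon H^{2d}_{dR}(X/\overline{k})\xrightarrow{\ \sim\ }\overline{k}$, all compatible with base change to $\C$ and with Grothendieck's comparison isomorphism $H^{\bullet}_{dR}(X/\C)\cong H^{\bullet}(X,\C)$ (Betti cohomology). Second, Deligne's normalization of the cycle class (see \cite[\S 2]{deligne1982hodge}): under the comparison isomorphism the de Rham cycle class equals $(2\pi i)^{c}$ times the topological class $\mathrm{cl}_{B}(Z)\in H^{2c}(X,\Q)$, i.e.\ the Poincaré dual of $[Z]$, and, comparing both trace maps on the class of a point, topological integration over $X$ equals $(2\pi i)^{d}\,\mathrm{Tr}$ on $H^{2d}$.

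Granting these, the rest is bookkeeping. Topological Poincaré duality gives $\int_{\delta}\omega=\int_{X}\omega\cup\mathrm{cl}_{B}(Z)$, and substituting the two comparison formulas yields
\[
\int_{\delta}\omega=(2\pi i)^{d}\,\mathrm{Tr}\!\big(\omega\cup(2\pi i)^{-c}\mathrm{cl}_{dR}(Z)\big)=(2\pi i)^{\,d-c}\,\mathrm{Tr}\!\big(\omega\cup\mathrm{cl}_{dR}(Z)\big)=(2\pi i)^{p}\,\mathrm{Tr}\!\big(\omega\cup\mathrm{cl}_{dR}(Z)\big),
\]
using $d-c=p$. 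Since $m=2p$, dividing by $(2\pi i)^{m/2}=(2\pi i)^{p}$ leaves $\mathrm{Tr}(\omega\cup\mathrm{cl}_{dR}(Z))$, which lies in $\overline{k}$ because $\omega\in H^{m}_{dR}(X/k)\subset H^{m}_{dR}(X/\overline{k})$, $\mathrm{cl}_{dR}(Z)\in H^{2c}_{dR}(X/\overline{k})$, and cup product and trace are $\overline{k}$-linear. The only genuinely non-formal step is the second input — pinning down the precise power $(2\pi i)^{c}$ relating the de Rham and Betti cycle classes — which one establishes by functoriality, reducing via a proper pushforward to the class of a point, where the discrepancy between the algebraic trace and topological integration is exactly $(2\pi i)^{d}$; everything else is Poincaré duality and the compatibility of the comparison isomorphism with base change.
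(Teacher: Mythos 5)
Your argument is correct. The paper does not prove this statement itself --- its ``proof'' is only the citation to \cite[Proposition 1.5]{deligne1982hodge} --- and what you have written (spreading out to get the cycle defined over $\overline{k}$, the $\overline{k}$-rationality of the algebraic de Rham cycle class and trace, and the $(2\pi i)^{c}$ normalization of the comparison isomorphism accounting for exactly the factor $(2\pi i)^{m/2}$) is essentially the standard proof of that cited result.
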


\begin{proof}
See \cite[Proposition 1.5]{deligne1982hodge}.
\end{proof}
We can find algebraic expressions involving hypergeometric functions using Proposition \ref{prop:Deligne0109} and the following fact

\begin{equation}
\label{eq:alg2301}
\frac{B\left(\frac{1}{2},\dots,\frac{1}{2},\frac{\beta_n+1}{d} \right)B\left(A_{\beta'}+k,A_{\beta'} \right)}{\pi^{\frac{n}{2}}}\in \overline{\Q},
\end{equation}
where $A_{\beta'}=\frac{n-1}{2}+\frac{\beta_n+1}{d}$ and  $k$, $\beta_n\in\N.$ Which is obtained using the properties of the beta $B$ and gamma $\Gamma$ functions.

In most of the following results, we will have the hypothesis that $A_{\beta'}\notin\N$. This assures that hypergeometric functions appear within the computations of the periods (see Propositions \ref{inte}, \ref{prop:compint2006}, \ref{prop:alg22206}). The following proposition allows us to partially re-obtain Schwarz' list in \cite{schwarz1873ueber}.    

\begin{prop}
\label{prop:alg0604}
Let $X$ be a desingularization of the weighted hypersurface $D$ given by the quasi-homogenization $F$ of $f=g(x)+P(y)$, where $g(x)=x_1^{2}+x_{2}^{d}$  and  $P(y)=y(1-y)(\lambda-y)$. Consider $\frac{\omega_\beta}{f}$ a good form with $A_{\beta'}=\frac{1}{2}+\frac{\beta_2+1}{d}\notin\N$ and $\beta=(\beta',\beta_3)=(\beta_1,\beta_2,\beta_3).$ Let 

\begin{equation*}
\begin{split}
    \delta^0=\sum_{j=0}^{d-2}n_{j,0}\delta_0*\delta_{j}^{-1},\;\;\;
    \delta^1=\sum_{j=0}^{d-2}n_{j,1}\delta_1*\delta_{j}^{-1}.
    \end{split}
\end{equation*}
If $\delta^0$ and $\delta^1$ are generic Hodge cycles then either

\begin{equation}
\label{eq:sum2908}
\sum_{j=0}^{d-2}n_{j,k}\zeta_{d}^{j(\beta_2+1)} ,\;\;\; k=0,1
\end{equation}
is zero or 

$$F\left(A_{\beta'}+\beta_{3},1-A_{\beta'},2A_{\beta'}+\beta_{3};\frac{1}{\lambda}\right) \text{ and } F\left(A_{\beta'},1-A_{\beta'}-\beta_{3},2A_{\beta'};1-\lambda\right)$$
are in $\overline{\Q(\lambda)}$.  

\begin{proof}
The fact that $\frac{\omega_\beta}{f}$ is a good form means that $res\left(\frac{\omega_\beta}{f}\right)\in H_{dR}^2(X/\Q)$ (see Proposition \ref{prop:pole1102}). Therefore by Lefschetz $(1.1)$ theorem and Proposition \ref{prop:Deligne0109} we have

\begin{equation*}
    \frac{1}{2\pi i}\int_{\delta}res\left(\frac{\omega_\beta}{f}\right) \in \overline{\Q(\lambda)}, 
\end{equation*}     
if $\delta$ is a generic Hodge cycle. The above integral is computed in Proposition \ref{prop:compint2006} and using equation \eqref{eq:alg2301} one gets the result.
\end{proof}
\end{prop}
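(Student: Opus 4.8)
The plan is to deduce the statement from Deligne's algebraicity theorem for algebraic cycles (Proposition \ref{prop:Deligne0109}), exploiting that on a surface every Hodge class is algebraic and that the relevant periods have already been written in closed form in Proposition \ref{prop:compint2006}.

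\emph{Step 1: reduction to algebraic cycles on the generic fibre.} Here $n=2$, so $X$ is a smooth projective surface. Since $\frac{\omega_\beta}{f}$ is a good form, Proposition \ref{prop:pole1102} and the observation following Definition \ref{def:goodform} show that $res\!\left(\frac{\omega_\beta}{f}\right)$ has no residue at infinity, hence represents a class $\omega\in H^2_{dR}(X/k)$ with $k=\Q(\lambda)$ (the coefficients of $f$ lie in $\Z[\lambda]$). A generic Hodge cycle lies in $\Hod_2(X,\Q)_0$, and on a surface the Lefschetz $(1,1)$-theorem makes every such class algebraic; moreover, since $\delta^0$ and $\delta^1$ persist through the whole family $T$, they are algebraic already on the generic fibre $X/k$. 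This is precisely why the hypothesis is ``generic'' and not merely ``Hodge'': it is what lets us work over the function field $\Q(\lambda)$ instead of specializing $\lambda$ to a single value.

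\emph{Step 2: insert Deligne's theorem and the period formulas.} By Proposition \ref{prop:Deligne0109} with $m=2$, $\frac{1}{2\pi i}\int_{\delta^k}res\!\left(\frac{\omega_\beta}{f}\right)\in\overline{k}$ for $k=0,1$. Expand this by linearity, using \eqref{eq:cal1710} for $\delta^0$ and \eqref{eq:cal17101} for $\delta^1$. Because $m_1=2$ the only admissible choices are $\alpha_1=0=\beta_1$, so in Proposition \ref{prop:compint2006} the $x_1$-part of $p(\{g=-1\},\beta',\delta_j^{-1})$ is the constant $\zeta_2-1=-2$ and its $x_2$-part is $\zeta_d^{\,j(\beta_2+1)}(\zeta_d^{\beta_2+1}-1)$; summing over $j$ therefore pulls out the scalar $\sum_{j=0}^{d-2}n_{j,k}\zeta_d^{\,j(\beta_2+1)}$. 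All other factors that appear — the powers of $\lambda$ and of $\lambda-1$ (algebraic over $k$ since $A_{\beta'}\in\Q$), the nonzero constant $(\zeta_q^{\gamma+1}-1)^{-1}$ (finite since $A_{\beta'}=\tfrac{\gamma+1}{q}\notin\N$), the roots of unity, the relevant product of beta values divided by $\pi^{n/2}=\pi$ (in $\overline{\Q}$ by \eqref{eq:alg2301}), and the factor $\pi/(2\pi i)=(2i)^{-1}$ — are algebraic over $\Q(\lambda)$ and nonzero. Hence
\begin{equation*}
\frac{1}{2\pi i}\int_{\delta^k}res\!\left(\frac{\omega_\beta}{f}\right)=c_k\left(\sum_{j=0}^{d-2}n_{j,k}\zeta_d^{\,j(\beta_2+1)}\right)F_k,\qquad c_k\in\overline{\Q(\lambda)}\setminus\{0\},
\end{equation*}
where $F_0=F\!\left(A_{\beta'}+\beta_3,1-A_{\beta'},2A_{\beta'}+\beta_3;\frac{1}{\lambda}\right)$ and $F_1=F\!\left(A_{\beta'},1-A_{\beta'}-\beta_3,2A_{\beta'};1-\lambda\right)$.

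\emph{Step 3: conclusion, and the main obstacle.} The left side lies in $\overline{\Q(\lambda)}$, $c_k$ is a nonzero element of $\overline{\Q(\lambda)}$, and $\sum_j n_{j,k}\zeta_d^{\,j(\beta_2+1)}$, being a $\Q$-linear combination of roots of unity, lies in $\overline{\Q}$. Therefore, for each $k\in\{0,1\}$, either $\sum_j n_{j,k}\zeta_d^{\,j(\beta_2+1)}=0$ or $F_k\in\overline{\Q(\lambda)}$, which is the asserted alternative. The only real work is the bookkeeping in Step 2: one must check that no collected constant vanishes or is transcendental over $\Q(\lambda)$. The delicate points are that $A_{\beta'}\notin\N$ forces $\zeta_q^{\gamma+1}\neq1$; that $0\le\beta_2\le d-2$ forces $d\nmid\beta_2+1$, so $\zeta_d^{\beta_2+1}-1\neq0$; and that \eqref{eq:alg2301} genuinely applies to each beta product, in particular in the case $\beta_3=0$, where $B(A_{\beta'},A_{\beta'})$ must be paired with $B(\tfrac12,\tfrac{\beta_2+1}{d})$ via the $\Gamma$-duplication formula. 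Beyond that it is a direct substitution into Propositions \ref{prop:compint2006} and \ref{prop:Deligne0109}.
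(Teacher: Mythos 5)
Your proposal is correct and follows essentially the same route as the paper's own proof: good form $\Rightarrow$ the residue lives in $H^2_{dR}(X/\Q(\lambda))$, Lefschetz $(1,1)$ plus Proposition \ref{prop:Deligne0109} give $\frac{1}{2\pi i}\int_{\delta^k}res\left(\frac{\omega_\beta}{f}\right)\in\overline{\Q(\lambda)}$, and then the period formulas of Proposition \ref{prop:compint2006} together with \eqref{eq:alg2301} yield the alternative. The paper states this in three sentences; your Step 2 bookkeeping (the factor $\zeta_d^{\beta_2+1}-1\neq 0$, $\zeta_q^{\gamma+1}\neq 1$ from $A_{\beta'}\notin\N$, and the non-vanishing of the beta values) is exactly the verification the paper leaves implicit.
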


In the context of Example \ref{exam:goodform3003}, using Theorem \ref{theo:main2708} we can describe the strong generic Hodge cycles. With this description, we observe that

$$\delta^1=n_0(\delta_1*\delta_{0}^{-1}+\delta_1*\delta_{3}^{-1}+\delta_1*\delta_{6}^{-1})+n_1(\delta_1*\delta_{1}^{-1}+\delta_1*\delta_{4}^{-1}+\delta_1*\delta_{7}^{-1})$$
is a strong generic Hodge cycle such that equation \eqref{eq:sum2908} is nonzero. Therefore applying the previous corollary to $\delta^1$ and the differential forms in Example \ref{exam:goodform3003}, we obtain that the hypergeometric functions in equation \eqref{eq:alg1708} are algebraic over $\Q(\lambda).$

The property in Proposition \ref{prop:Deligne0109} would be also true for Hodge cycles if the Hodge conjecture is true.
Deligne has proved this property for Hodge cycles in the usual Fermat variety, even though the Hodge conjecture is unknown. More explicitly
\begin{prop}[Deligne]
\label{prop:Deligne0204}
Let $X$ be a smooth projective variety defined by $x_1^d+\dots+x_{n+1}^d$. If $\delta\in H_m(X,\Q)$ is a Hodge cycle, then for every $\omega\in H^m_{dR}(X/k)$:

\begin{equation*}
    \frac{1}{(2\pi i)^{m/2}}\int_{\delta}\omega \in \overline{k}, 
\end{equation*}
where $X/k$ denotes the variety over a field $k\subset\C.$ \end{prop}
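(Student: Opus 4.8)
The plan is to follow Deligne's original argument, whose spine is the eigenspace decomposition of $H^m(X)$ under the diagonal torus. Write $X$ for the degree-$d$ Fermat hypersurface and let $m$ be its complex dimension, so that $X\subset\P^{m+1}$ has $m+2$ homogeneous coordinates and $m$-dimensional Hodge cycles are the middle ones. First I would reduce to the case that $m$ is even and $\delta$ is primitive of degree $m$: the general case follows by linearity, non-primitive classes and classes of other degrees being reduced to powers of the hyperplane class, which is algebraic, so Proposition \ref{prop:Deligne0109} applies. I would also enlarge $k$ to assume $\Q(\zeta_d)\subseteq k$: this weakens nothing, and it lets me use that $H^m_{dR}(X/k)$ is spanned over $k$ by classes already defined over $\Q(\zeta_d)$. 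The finite group $G=\mu_d^{\,m+2}/(\text{diagonal }\mu_d)$ acts on $X$ by scaling coordinates, hence on $H^m(X,\Q)_0$; after $\otimes\,\Q(\zeta_d)$ it splits into one-dimensional eigenlines $V_a$, indexed by $a=(a_i)_{0\le i\le m+1}$ with $a_i\in\Z/d\Z\setminus\{0\}$ and $\sum_i a_i\equiv 0\pmod d$, each $V_a$ defined over $\Q(\zeta_d)$.

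Next I would identify which $V_a$ can occur in a rational Hodge class. By Griffiths--Steenbrink (the mechanism behind Definition \ref{def:Hodgecycle2708}, via the numbers $A_\beta$), $V_a$ sits in Hodge type $(p,q)$ with $q=\langle a\rangle-1$, where $\langle a\rangle:=\sum_i\langle a_i/d\rangle$ and $\langle\,\cdot\,\rangle$ is the fractional part. The group $\mathrm{Gal}(\Q(\zeta_d)/\Q)\cong(\Z/d\Z)^\times$ permutes eigenlines by $\sigma_t(V_a)=V_{ta}$, so a $\Q$-rational class has $V_{ta}$-component Galois-conjugate to its $V_a$-component. Hence $V_a$ contributes to $\Hod_m(X,\Q)_0$ exactly when $\langle ta\rangle=\tfrac m2+1$ for \emph{all} $t\in(\Z/d\Z)^\times$; call such $a$ a Hodge character. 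It follows that $\Hod_m(X,\Q)_0=\bigoplus_O\bigl(\bigoplus_{a\in O}V_a\bigr)^{\mathrm{Gal}}$, the sum over $(\Z/d\Z)^\times$-orbits $O$ of Hodge characters, each summand a $\Q$-line. So it suffices to prove: for a Hodge character $a$, a generator $\delta_O$ of the corresponding line, and the de Rham eigenclass $\omega_a$, one has $(2\pi i)^{-m/2}\!\int_{\delta_O}\omega_a\in\overline\Q$ (pairings against $\omega_{a'}$ with $a'$ outside $O$ vanish by characters).

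Then I would compute the period. Expressing $\delta_O$ in terms of the explicit topological cycles of Proposition \ref{prop:basis2608} (\S\ref{sec:fermat}) and evaluating via the multi-Beta formula of Proposition \ref{pro:fermat} (in Deligne's projective form for the Fermat hypersurface), $\int\omega_a$ equals, up to an algebraic factor and the $2\pi i$-powers carried by the residue and Gysin maps, the quotient
\begin{equation*}
\frac{\prod_i\Gamma(\langle a_i/d\rangle)}{\Gamma(\langle a\rangle)}=\frac{\prod_i\Gamma(\langle a_i/d\rangle)}{(m/2)!},
\end{equation*}
the denominator being rational since $\langle a\rangle=\tfrac m2+1\in\Z$. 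After bookkeeping the $2\pi i$'s (the residue map contributes one $(2\pi i)^{-1}$, to be recovered against the $(2\pi i)^{m/2}$ being divided out), the whole statement collapses to the arithmetic claim: if $a$ is a Hodge character then $\prod_i\Gamma(\langle a_i/d\rangle)\in\overline\Q\cdot\pi^{\langle a\rangle}$.

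The hard part is exactly this Gamma-monomial relation. When $(\Z/d\Z)^\times=\{\pm1\}$ (e.g. $d\in\{3,4,6\}$) it drops out of the reflection formula $\Gamma(x)\Gamma(1-x)=\pi/\sin\pi x$ applied to the orbit $\{a,-a\}$, which gives $\prod_i\Gamma(\langle a_i/d\rangle)^2\in\pi^{m+2}\overline\Q$. In general, reflection together with the Gauss multiplication formula pins down only the \emph{full orbit} product $\prod_{t\in(\Z/d\Z)^\times}\prod_i\Gamma(\langle ta_i/d\rangle)$, not the individual one, and a genuine arithmetic input is unavoidable. The input I would use is that $\prod_i\Gamma(\langle a_i/d\rangle)$ is the period of the rank-one Fermat motive $V_a$, which has multiplication by $\Q(\zeta_d)$: by the Gross--Koblitz formula it is, at every good prime, $\mathfrak p$-adically congruent to a Jacobi sum $J_a\in\Q(\zeta_d)$, and "$a$ a Hodge character'' means precisely that $J_a$ is a Weil number of weight $m$ whose Hodge type is pure $(\tfrac m2,\tfrac m2)$ — the Tate-like case. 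Deligne's theorem that Hodge classes on CM abelian varieties are absolutely Hodge, applied to the products of Jacobians of Fermat curves that dominate $X$ (the classical inductive structure of Fermat varieties), then shows that such a $V_a$ is, in the category of motives for absolute Hodge cycles, a sum of copies of the Tate motive $\Q(-\tfrac m2)$, equivalently that its period is $\pi^{m/2}$ times an algebraic number; combined with the $2\pi i$-bookkeeping this is exactly the displayed relation, and the proposition follows. A more elementary but lengthier substitute for this last input is to derive the Gamma-monomial relation directly from Stickelberger's theorem on the prime factorisation of $J_a$ together with the archimedean estimate $|J_a|=q^{m/2}$, which together pin $J_a$ down up to a root of unity after a Tate twist, and hence pin $\prod_i\Gamma(\langle a_i/d\rangle)$ down modulo $\overline\Q$ and powers of $\pi$.
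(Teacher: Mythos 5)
Your outline is a faithful reconstruction of Deligne's own proof of this theorem (eigenspace decomposition under $\mu_d^{m+2}$, Hodge characters via the $\langle ta\rangle$ condition, the Gamma-quotient period formula, and the Gamma-monomial relation supplied either by absolute Hodge cycles on the CM Jacobians of Fermat curves dominating $X$ or by the Gross--Koblitz/Stickelberger route), which is precisely what the paper invokes: its proof consists of the single citation to \cite[Theorem 7.15]{deligne1982hodge} (or \cite[Theorem 16.1]{Mov}). So the proposal is correct and takes essentially the same approach as the source the paper relies on.
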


\begin{proof}
See \cite[Theorem 7.15]{deligne1982hodge} or \cite[Theorem 16.1]{Mov}. 
\end{proof}

In the same direction of the previous proposition, we have the following result

\begin{prop}
\label{prop:ndim2508}
Let $X_n$ be a desingularization of the weighted hypersurface $D_n$ given by the quasi-homogenization $F_n$ of $f_n=g_n(x)+P(y)$, where $g_n(x)=x_1^{2}+\dots+x_{n-1}^2+x_{n}^{d}$  and  $P(y)=y(1-y)(\lambda-y)$. Consider $\frac{\omega_{\beta}}{f_n}$ a good form with $A_{\beta'}\notin\N$. If $\delta\in H_n(X_n,\Q)$ is a strong generic Hodge cycle, we have 

\begin{equation*}
    \frac{1}{(2\pi i)^{n/2}}\int_{\delta}res\left(\frac{\omega_{\beta}}{f_n}\right) \in \overline{\Q(\lambda)}.
\end{equation*}
\end{prop}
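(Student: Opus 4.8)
The plan is to reduce Proposition \ref{prop:ndim2508} to the two-dimensional situation handled by Proposition \ref{prop:alg0604}, passing through Deligne's theorem for the Fermat variety (Proposition \ref{prop:Deligne0204}). The key structural input is that the integral of $res\left(\frac{\omega_\beta}{f_n}\right)$ over a joint cycle $\delta_k*\delta_\alpha^{-1}$ factors, by Proposition \ref{inte}, into a ``Fermat part'' $p(\{-g=1\},\beta',\delta_\alpha^{-1})$ coming from $H_{n-1}$ of the higher-dimensional Fermat variety $\{g=-1\}$, times a ``curve part'' which is an integral over a one-dimensional cycle in $\{P(y)=z^q\}$ and depends only on $P$, $q$, $\gamma$, $k$ — quantities attached to the plane curve $x_1^2+x_2^d+\text{(power of $y$))}$. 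So morally the period splits as (period of a Hodge-type class of the big Fermat variety) $\times$ (period of a class on a curve).

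First I would make this factorization precise: using Proposition \ref{inte} (the case $A_{\beta'}\notin\N$, which holds by hypothesis since $\frac{\omega_\beta}{f_n}$ is a good form), write
\begin{equation*}
\int_{\delta_k*\delta_\alpha^{-1}}res\left(\frac{\omega_\beta}{f_n}\right)=\frac{p(\{-g=1\},\beta',\delta_\alpha^{-1})}{p(\{z^q=1\},\gamma,\delta)}\int_{\delta_k*\delta}res\left(\frac{y^{\beta_{n+1}}z^\gamma\,dy\wedge dz}{P-z^q}\right).
\end{equation*}
By Proposition \ref{pro:fermat}, $p(\{-g=1\},\beta',\delta_\alpha^{-1})$ is an explicit algebraic multiple of $B\left(\frac12,\dots,\frac12,\frac{\beta_n+1}{d}\right)$, and equation \eqref{eq:alg2301} tells us precisely that this beta value, once multiplied by the companion beta factor $B(A_{\beta'}+k,A_{\beta'})$ that appears in the curve integral and divided by $\pi^{(n-1)/2}$ or $\pi^{n/2}$, lands in $\overline{\Q}$. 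Second, I would identify $\int_{\delta_k*\delta}res(\cdots)$ with $2\pi i$ times a period of a good form on the plane-curve variety $f_2=x_1^2+x_2^{d'}+P(y)$ for the appropriate $d'=q$ — that is, recognize the curve integral as (up to algebraic and $2\pi i$ factors) the integral computed in Proposition \ref{prop:compint2006}, whose algebraicity over $\overline{\Q(\lambda)}$ after division by $2\pi i$ is exactly the content of Proposition \ref{prop:alg0604}. The hypothesis that $\delta$ is a \emph{strong} generic Hodge cycle is what lets me invoke Proposition \ref{prop:alg0604}: by Definition \ref{def:SGH0403} and Remark \ref{rem:sep2208}, $\delta$ is (a sum over $k$ of) $\sum_\alpha n_{\alpha,k}\delta_k*\delta_\alpha^{-1}$ with $(n_{\alpha,k})\in\mathcal{A}$, and the corresponding plane-curve combination $\sum_j n_{j,k}\delta_k*\delta_j^{-1}$ is a generic Hodge cycle on the curve; so Proposition \ref{prop:alg0604} applies and gives algebraicity of the relevant hypergeometric functions — hence of the whole curve period divided by $2\pi i$ — over $\overline{\Q(\lambda)}$.

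Putting the pieces together: the period $\frac{1}{(2\pi i)^{n/2}}\int_\delta res\left(\frac{\omega_\beta}{f_n}\right)$ equals a sum of terms, each of which is (algebraic number) $\times$ $\frac{B(\frac12,\dots,\frac12,\frac{\beta_n+1}{d})B(A_{\beta'}+k,A_{\beta'})}{\pi^{n/2}}$ $\times$ $\frac{1}{2\pi i}(\text{curve period})$ $\times$ (power of $2\pi i$ bookkeeping), and all three non-trivial factors lie in $\overline{\Q(\lambda)}$ by \eqref{eq:alg2301}, Proposition \ref{prop:alg0604}, and the fact that $B(A_{\beta'}+k,A_{\beta'})$ is the same beta factor appearing in Proposition \ref{prop:compint2006}. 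I expect the main obstacle to be the careful bookkeeping of the powers of $2\pi i$ and of $\pi$: one must check that the transcendental $\pi$-powers coming from the Fermat beta factor (Proposition \ref{pro:fermat}), those absorbed into the curve period via Proposition \ref{prop:alg0604}, and the normalizing $(2\pi i)^{n/2}$ all cancel to leave something in $\overline{\Q(\lambda)}$, and that the ``good form'' hypothesis really does guarantee $res\left(\frac{\omega_\beta}{f_n}\right)\in H^n_{dR}(X_n/\Q(\lambda))$ so that the factorization through Proposition \ref{inte} is legitimate (this is where one uses Proposition \ref{prop:pole1102} and Definition \ref{def:goodform}, possibly after a pole-order reduction via \eqref{reduction} à la Proposition \ref{prop:alg22206}). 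A secondary subtlety is ensuring the factorization is compatible with taking $\Q$-linear combinations over $\alpha$, i.e. that the ``curve part'' is genuinely independent of $\alpha$ (it is, since it depends only on $k,\gamma,q,P$), so that strong-generic combinations on $X_n$ map to generic Hodge cycles on the curve.
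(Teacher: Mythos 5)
Your overall strategy --- split $\delta$ into the pieces $\delta^{k}=\sum_\alpha n_{\alpha,k}\,\delta_k*\delta_\alpha^{-1}$, factor each period via Proposition \ref{inte} into a Fermat beta factor times a ``curve part'', handle the transcendental $\pi$-powers with equation \eqref{eq:alg2301}, and import algebraicity from the $2$-dimensional case via Lefschetz $(1,1)$ and Proposition \ref{prop:Deligne0109} --- is the right one, and it is essentially the skeleton of the paper's proof. But there is a genuine gap at the central step, the ``identification'' of the curve part of the $n$-dimensional period with a period on the surface $x_1^2+x_2^{d}+P(y)$. For the $n$-dimensional form one has $A_{\beta'}=\frac{n-1}{2}+\frac{\beta_n+1}{d}$, while for the corresponding form $\frac{\omega_{\hat\beta}}{f_2}$ with $\hat\beta=(0,\beta_n,\beta_{n+1})$ on the surface one has $A_{\hat\beta'}=\frac{1}{2}+\frac{\beta_n+1}{d}$; these differ by the integer $\frac{n}{2}-1$, which is nonzero as soon as $n\geq 4$. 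Consequently the two curve parts are \emph{different} integrals: by Proposition \ref{prop:compint2006} they produce hypergeometric functions $F\bigl(A_{\beta'}+\beta_{n+1},1-A_{\beta'},2A_{\beta'}+\beta_{n+1};\frac{1}{\lambda}\bigr)$ and $F\bigl(A_{\hat\beta'}+\beta_{n+1},1-A_{\hat\beta'},2A_{\hat\beta'}+\beta_{n+1};\frac{1}{\lambda}\bigr)$ whose parameters agree only modulo $\Z$, and no choice of auxiliary exponent $d'=q$ on a surface can match $A_{\beta'}$ exactly, since $\frac{1}{2}+\frac{\beta_2+1}{d'}<\frac{3}{2}\leq A_{\beta'}$. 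Your appeal to ``Proposition \ref{prop:alg0604} on the curve'' also conflates the curve $\{P(y)=z^q\}$ (where the cycle lives in $H_1$ and no Lefschetz $(1,1)$-type statement is available) with the surface $X_2$ (where it is available).

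The missing idea, which is exactly how the paper closes this gap, is contiguity: because $A_{\beta'}-A_{\hat\beta'}\in\Z$, the two hypergeometric functions above are contiguous, and the good-form hypothesis on $\frac{\omega_\beta}{f_n}$ (hence on $\frac{\omega_{\hat\beta}}{f_2}$) guarantees they are irreducible; by the classical theory of contiguous relations (\cite[Theorem 1.1]{beukers2007gauss}) the $n$-dimensional hypergeometric function is then a $\Q(\lambda)$-linear combination of the $2$-dimensional one and its derivatives, so algebraicity over $\Q(\lambda)$ transfers from dimension $2$ to dimension $n$. One also needs the small dichotomy the paper records: since both periods carry the common factor $\sum_j n_{j,k}\zeta_d^{j(\beta_n+1)}$, either the $2$-dimensional period vanishes (and then so does the $n$-dimensional one) or it is nonzero and \eqref{eq:alg2301} forces the $2$-dimensional hypergeometric function itself to be algebraic, after which the contiguity argument applies. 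Without this bridge your identification step fails, so the proposal as written does not yield a proof.
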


\begin{proof}
The main idea of the proof is to use Lefschetz $(1,1)$ theorem in the $2$-dimensional case and to construct the $n$-dimensional integral from the $2$-dimensional integral. Consider $(n_{kj})\in\mathcal{A}$, where $\mathcal{A}$ is defined in (\ref{eq:defA}). This element induces the cycle      

\begin{equation*}
\delta^n=\delta^{n0}+\delta^{n1}=
\sum_{j=0}^{d-2}n_{j,0}\delta_0*\delta_{n,j}^{-1}+\sum_{j=0}^{d-2}n_{j,1}\delta_1*\delta_{n,j}^{-1},  \end{equation*}
with $\delta_{n,j}^{-1}\in H_{n-1}(\{g_n=-1\})$, which in turn induces a strong generic Hodge cycle. We know that $\delta^n$ is a strong generic Hodge cycle if and only if $\delta^{n0}, \delta^{n1}$ are strong generic Hodge cycles (see Remark \ref{rem:sep2208}). Now consider the differential form $\frac{\omega_{\hat{\beta}}}{f_2}$ with $\hat{\beta}=(0,\beta_n,\beta_{n+1})$. Observe that $\frac{\omega_{\hat{\beta}}}{f_2}=\frac{\omega_{\beta}}{f_n}$ for $n=2$ and that $\frac{n}{2}-1<A_\beta=A_{\hat{\beta}}+\frac{n}{2}-1$ for each $\beta$. An analysis similar to the proof of Proposition \ref{prop:goodform2909} allows us to deduce that if $\frac{\omega_{\beta}}{f_n}$ is a good form then $\frac{\omega_{\hat{\beta}}}{f_2}$ is a good form. Now, by Proposition \ref{prop:compint2006}, up to multiplication by a nonzero element of $\overline{\Q(\lambda)}$ we have

\begin{equation}
\label{eq:06042}
    \begin{split}
\frac{1}{(2\pi i)^{\frac{n}{2}}}\int_{\delta^{n0}} res\left(\frac{\omega_\beta}{f_n}\right)
&=\frac{1}{(2\pi i)^{\frac{n}{2}}}\left(\sum_{j=0}^{d-2}n_{j,0}\zeta_d^{j(\beta_n+1)}\right)B\left(\frac{1}{2},\dots,\frac{1}{2},\frac{\beta _n+1}{d}\right)\times\\
& \;\;\:B\left(A_{\beta'},A_{\beta'}\right) F\left(A_{\beta'}+\beta_{n+1},1-A_{\beta'},2A_{\beta'}+\beta_{n+1};\frac{1}{\lambda}\right).
    \end{split}
\end{equation}
On the other hand, up to multiplication by a nonzero element of $\overline{\Q(\lambda)}$

\begin{equation}
\label{eq:06043}
    \begin{split}
\frac{1}{2\pi i}\int_{\delta^{20}} res\left(\frac{\omega_{\hat{\beta}}}{f_2}\right)
&=\frac{1}{2\pi i}\left(\sum_{j=0}^{d-2}n_{j,0}\zeta_d^{j(\beta_n+1)}\right)B\left(\frac{1}{2},\frac{\beta _n+1}{d}\right)\times\\
& \;\;\:B\left(A_{\hat{\beta}'},A_{\hat{\beta}'}\right) F\left(A_{\hat{\beta}'}+\beta_{n+1},1-A_{\hat{\beta}'},2A_{\hat{\beta}'}+\beta_{n+1};\frac{1}{\lambda}\right).
    \end{split}
\end{equation}
Since $\delta^{20}$ induces a strong generic Hodge cycle of $X_2$, by Lefschetz $(1,1)$ theorem, $\delta^{20}$ is algebraic. By Proposition \ref{prop:Deligne0109} we have that $\frac{1}{2\pi i}\int_{\delta^{20}} res\left(\frac{\omega_{\hat{\beta}}}{f_2}\right)\in\overline{\Q(\lambda)}.$ If the integral in \eqref{eq:06043} is zero, then equation \eqref{eq:06042} is zero. If it is not zero, then equation \eqref{eq:alg2301} allows us to conclude that the hypergeometric function of equation \eqref{eq:06043} is algebraic over $\Q(\lambda)$. As $A_{\beta'}=A_{\hat{\beta}'}+\frac{n}{2}-1$ and $\frac{\omega_{\hat{\beta}}}{f_2}$ is a good form we have that the functions
 
$$F\left(A_{\beta'}+\beta_{n+1},1-A_{\beta'},2A_{\beta'}+\beta_{n+1};\frac{1}{\lambda}\right), \;\;\;F\left(A_{\hat{\beta}'}+\beta_{n+1},1-A_{\hat{\beta}'},2A_{\hat{\beta}'}+\beta_{n+1};\frac{1}{\lambda}\right) $$
are contiguous and irreducible. The hypergeometric function $F(a,b,c)$ is called \textbf{irreducible} if neither of $a$, $b$, $c-a$, $c-b$ is an integer. Two hypergeometric functions $F(a_1,b_1,c_1;z)$ and $F(a_2,b_2,c_2;z)$ are \textbf{contiguous} if the difference of their parameters

$$a_1-a_2, b_1-b_2, c_1-c_2$$
are integer numbers. Now by classical theory of the hypergeometric function, the hypergeometric function in \eqref{eq:06042} is equal to a $\Q(\lambda)$-linear combination of the hypergeometric functions in \eqref{eq:06043} and their derivatives (see \cite[Theorem 1.1]{beukers2007gauss}). In conclusion, the hypergeometric function in \eqref{eq:06042} is also algebraic over $\Q(\lambda)$. Now using equation \eqref{eq:alg2301} we conclude that \eqref{eq:06042} is algebraic over $\Q(\lambda).$ The same reasoning is valid for the integral $\frac{1}{(2\pi i)^{\frac{n}{2}}}\int_{\delta^{n1}} res\left(\frac{\omega_\beta}{f_n}\right)$, which allows us to conclude the result.


\end{proof}

The above argument does not work for a differential form with pole of order greater than one. However, we can still get a similar result using the same ideas with an extra hypothesis.

\begin{prop}
\label{prop:algform1807}
Let $X$ be a desingularization of the weighted hypersurface $D$ given by the quasi-homogenization $F$ of $f=g(x)+P(y)$, where $g(x)=x_1^{2}+\dots+x_{n-1}^2+x_{n}^{d}$  and  $P(y)=y(1-y)(\lambda-y)$. Consider $\frac{\omega_{\beta}}{f^k}$ a good form such that $\frac{\omega_{\beta}}{f}$ is also a good form,  and $A_{\beta'}\notin\N$. If $\delta\in H_n(X,\Q)$ is a strong generic Hodge cycle, we have 

\begin{equation*}
    \frac{1}{(2\pi i)^{n/2}}\int_{\delta}res\left(\frac{\omega_{\beta}}{f^k}\right) \in \overline{\Q(\lambda)}.
\end{equation*}
\end{prop}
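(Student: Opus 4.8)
The plan is to lower the pole order of $\frac{\omega_\beta}{f^k}$ by the Griffiths--Dwork method and then apply Proposition \ref{prop:ndim2508} term by term. First I would iterate the pole reduction formula \eqref{reduction}, with $\Delta$, $Q_1$, $Q_2$ as in Example \ref{ex:discr1908}, starting from $\frac{\omega_\beta}{f^k}$ until the pole order becomes $1$. Since $Q_1$ has degree $2$ and $Q_2$, $Q_1'$ degree at most $1$ in $y$, each step only shifts the $y$-exponent by $-1$, $0$ or $+1$, and the $-1$ shift carries the factor $\beta_{n+1}$, so no negative $y$-exponent ever appears; moreover $\Delta\in\Q[\lambda]$, $Q_1,Q_2$ have coefficients in $\Q[\lambda]$, and $A_{\beta'}$ is a fixed rational number, so every coefficient produced lies in $\Q(\lambda)$. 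Thus in $H_{dR}^{n}(U)$ we get a finite decomposition
\begin{equation*}
res\left(\frac{\omega_\beta}{f^k}\right)=\sum_{i}c_i(\lambda)\,res\left(\frac{\omega_{\beta+(0',i)}}{f}\right),\qquad c_i(\lambda)\in\Q(\lambda),
\end{equation*}
in which each $\omega_{\beta+(0',i)}$ has the same $x'$-part as $\omega_\beta$; in particular $A_{(\beta+(0',i))'}=A_{\beta'}\notin\N$ for all $i$, so Proposition \ref{prop:compint2006} applies to each summand, and the hypergeometric functions it produces, $F\!\left(A_{\beta'}+\beta_{n+1}+i,1-A_{\beta'},2A_{\beta'}+\beta_{n+1}+i;\frac{1}{\lambda}\right)$, are mutually contiguous and contiguous to $F\!\left(A_{\beta'}+\beta_{n+1},1-A_{\beta'},2A_{\beta'}+\beta_{n+1};\frac{1}{\lambda}\right)$.

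Next I would treat each summand exactly as in the proof of Proposition \ref{prop:ndim2508}. The strong generic Hodge cycle $\delta$ comes from some $(n_{\alpha,k})\in\mathcal{A}$ (see \eqref{eq:defA}); by Remark \ref{rem:sep2208} it splits as $\delta=\delta^{n0}+\delta^{n1}$ and, by symmetry, it is enough to handle $\delta^{n0}=\sum_{j=0}^{d-2}n_{j,0}\,\delta_0*\delta_{n,j}^{-1}$. Put $\hat\beta=(0,\beta_n,\beta_{n+1})$ and let $\delta^{20}=\sum_{j=0}^{d-2}n_{j,0}\,\delta_0*\delta_{j}^{-1}$ be the corresponding surface cycle, which induces a strong generic Hodge cycle on the surface $X_2$ attached to $g_2=x_1^2+x_2^d$ and hence is algebraic by the Lefschetz $(1,1)$ theorem; as in Proposition \ref{prop:ndim2508} (using the extra hypothesis that $\frac{\omega_\beta}{f}$ is a good form), $\frac{\omega_{\hat\beta}}{f_2}$ is again a good form. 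Combining Propositions \ref{pro:fermat} and \ref{prop:compint2006} with \eqref{eq:alg2301}, there are $a_i\in\overline{\Q(\lambda)}$, independent of $j$, such that
\begin{equation*}
\frac{1}{(2\pi i)^{n/2}}\int_{\delta^{n0}}res\left(\frac{\omega_{\beta+(0',i)}}{f}\right)=\left(\sum_{j=0}^{d-2}n_{j,0}\zeta_d^{j(\beta_n+1)}\right)a_i\,F\!\left(A_{\beta'}+\beta_{n+1}+i,1-A_{\beta'},2A_{\beta'}+\beta_{n+1}+i;\frac{1}{\lambda}\right),
\end{equation*}
and, with the same coefficient $\sum_j n_{j,0}\zeta_d^{j(\beta_n+1)}$ and with $A_{\beta'}$ replaced by $A_{\hat\beta'}=A_{\beta'}-\frac{n}{2}+1$, the analogous formula for $\frac{1}{2\pi i}\int_{\delta^{20}}res\left(\frac{\omega_{\hat\beta}}{f_2}\right)$.

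Finally, if $\sum_j n_{j,0}\zeta_d^{j(\beta_n+1)}=0$ then all the integrals above vanish and there is nothing to prove; otherwise, $\delta^{20}$ being algebraic, Proposition \ref{prop:Deligne0109} together with \eqref{eq:alg2301} forces $F\!\left(A_{\hat\beta'}+\beta_{n+1},1-A_{\hat\beta'},2A_{\hat\beta'}+\beta_{n+1};\frac{1}{\lambda}\right)\in\overline{\Q(\lambda)}$, and this hypergeometric function is irreducible because $\frac{\omega_{\hat\beta}}{f_2}$ is a good form. Since all the hypergeometric functions appearing above --- in dimension $n$, for every $i$, and in dimension $2$ --- are contiguous to this irreducible one, \cite[Theorem 1.1]{beukers2007gauss} writes each of them as a $\Q(\lambda)$-linear combination of it and its derivatives, so each is algebraic over $\Q(\lambda)$. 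Therefore every $\frac{1}{(2\pi i)^{n/2}}\int_{\delta^{n0}}res\left(\frac{\omega_{\beta+(0',i)}}{f}\right)$ lies in $\overline{\Q(\lambda)}$, and summing over $i$ with the coefficients $c_i(\lambda)\in\Q(\lambda)$, together with the symmetric argument for $\delta^{n1}$, yields $\frac{1}{(2\pi i)^{n/2}}\int_{\delta}res\left(\frac{\omega_\beta}{f^k}\right)\in\overline{\Q(\lambda)}$. The main obstacle is the bookkeeping of the first step: one has to check that iterating \eqref{reduction} never leaves the family of forms sharing the fixed non-integral value $A_{\beta'}$ and produces only hypergeometric functions contiguous to the single one governed by the surface case --- this is exactly where the hypothesis that $\frac{\omega_\beta}{f}$ is a good form is needed, so that that one hypergeometric function is irreducible and the contiguity relations transport its algebraicity to all the others.
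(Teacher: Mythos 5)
Your proposal is correct and follows essentially the same route as the paper: the paper organizes the reduction as an induction on $k$ (base case $k=2$ via Proposition \ref{prop:alg22206}, then the pole-reduction formula \eqref{reduction}) arriving at the same expression of the integral as a $\Q(\lambda)$-combination of hypergeometric functions contiguous to the irreducible $F\left(A_{\beta'}+\beta_{n+1},1-A_{\beta'},2A_{\beta'}+\beta_{n+1};\frac{1}{\lambda}\right)$, then invokes Proposition \ref{prop:ndim2508}, equation \eqref{eq:alg2301} and \cite[Theorem 1.1]{beukers2007gauss} exactly as you do. Your up-front full reduction to pole order one and term-by-term integration is just a repackaging of that induction, and your bookkeeping of the $y$-exponent shifts and of the invariance of $A_{\beta'}$ is the same observation the paper relies on.
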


\begin{proof}
Consider $(n_{kj})\in\mathcal{A}$, this element induces the cycle      
\begin{equation*}
\delta=\delta^{0}+\delta^{1}=
\sum_{j=0}^{d-2}n_{j,0}\delta_0*\delta_{j}^{-1}+\sum_{j=0}^{d-2}n_{j,1}\delta_1*\delta_{j}^{-1},   
\end{equation*}
with $\delta_{j}^{-1}\in H_{n-1}(\{g=-1\}),$ which in turn induces a strong generic Hodge cycle. We know that $\delta$ is a strong generic Hodge cycle if and only if $\delta^{0}, \delta^{1}$ are strong generic Hodge cycles (see Remark \ref{rem:sep2208}). By Proposition \ref{prop:compint2006}, up to multiplication by a nonzero element of $\overline{\Q(\lambda)}$ we have

\begin{equation}
\label{eq:algn02090}
    \begin{split}
\frac{1}{(2\pi i)^{\frac{n}{2}}}\int_{\delta^{0}} res\left(\frac{\omega_\beta}{f}\right)
&=\left(\sum_{j=0}^{d-2}n_{j,0}\zeta_d^{j(\beta_n+1)}\right)\frac{B\left(\frac{1}{2},\dots,\frac{1}{2},\frac{\beta _n+1}{d}\right)B\left(A_{\beta'},A_{\beta'}\right)}{(2\pi i)^{\frac{n}{2}}} F\left(a,b,c;\frac{1}{\lambda}\right),
    \end{split}
\end{equation}
where $a=A_{\beta'}+\beta_{n+1},$       $b=1-A_{\beta'},$         $c=2A_{\beta'}+\beta_{n+1}$. The fact that $\frac{\omega_{\beta}}{f}$ is a good form implies that $F\left(a,b,c;\frac{1}{\lambda}\right)$ is irreducible. An inductive argument allows us to prove, up to multiplication by a nonzero element of $\overline{\Q(\lambda)}$ that

\begin{equation}
\label{eq:algn0209}
\begin{split}
\frac{1}{(2\pi i)^{\frac{n}{2}}}\int_{\delta^{0}} res\left(\frac{\omega_\beta}{f^k}\right)
=&\left(\sum_{j=0}^{d-2}n_{j,0}\zeta_d^{j(\beta_n+1)}\right)\frac{B\left(\frac{1}{2},\dots,\frac{1}{2},\frac{\beta_n+1}{d} \right)B\left(
A_{\beta'},A_{\beta'}\right)}{(2\pi i)^{n/2}}\times \\
&\sum_{j}C_j(\lambda)F\left(a_j,b_j,c_j;\frac{1}{\lambda} \right),
\end{split}
\end{equation}
with $F\left(a_j,b_j,c_j;\frac{1}{\lambda} \right)$ contiguous to $F\left(a,b,c;\frac{1}{\lambda} \right)$ and $C_j(\lambda)\in\Q(\lambda)$. The first inductive step is for $k=2$. In this case we use Proposition \ref{prop:alg22206}. For the general case we apply pole order reduction (see \eqref{reduction}) and then the inductive hypothesis. Note that if the integral in \eqref{eq:algn02090} is zero, then integral in \eqref{eq:algn0209} is zero. 
Now, suppose that $\delta^{0}$ is a strong generic Hodge cycle and that the integral in \eqref{eq:algn02090} is nonzero. By Proposition \ref{prop:ndim2508} and equation \eqref{eq:alg2301} we have that $F\left(a,b,c;\frac{1}{\lambda} \right)\in\overline{\Q(\lambda)}$. Therefore $F\left(a_j,b_j,c_j;\frac{1}{\lambda} \right)$ are algebraic over $\Q(\lambda)$ (see \cite[Theorem 1.1]{beukers2007gauss}). With this and using equation \eqref{eq:alg2301} we conclude that \eqref{eq:algn0209} is algebraic over $\Q(\lambda).$ The same is valid for the cycle $\delta^{1}.$     
\end{proof}

\begin{remark}
Under the hypotheses of the previous proposition, the proof tells us that the hypergeometric functions that appear in the integral $\frac{1}{(2\pi i)^{n/2}}\int_{\delta}res\left(\frac{\omega_{\beta}}{f^k}\right)$ are algebraic.
\end{remark}

In the $2$-dimensional case, the previous result is independent of the hypothesis that    $\frac{\omega_\beta}{f}$ is a good form. What will be the nature of the hypergeometric functions that appear in the integral $\frac{1}{2\pi i}\int_{\delta^{j}} res\left(\frac{\omega_\beta}{f^k}\right),$ $j=0,1$, when $\frac{\omega_\beta}{f^k}$ is a good form and $\frac{\omega_\beta}{f}$ is not a good form? Exploring these integrals with $k=2$ we obtain Proposition \ref{prop:algtra0906}.

\begin{proof}[Proof of Proposition \ref{prop:algtra0906}]
Let $X$ be a desingularization of the weighted hypersurface $D$ given by the quasi-homogenization $F$ of $f=g(x)+P(y)$, where $g(x)=x_1^2+x_2^6$ and $P(y)=y(1-y)(\lambda-y)$. Consider the good form $\frac{\omega_\beta}{f^2}$ with $\beta=\left(0,4,0\right)$. Observe that $\frac{\omega_\beta}{f}$ is not a good form. In this case $\mathcal{A}=\Q^{2\times5}$, so every cycle in $H_2(U,\Q)$ induces an element in $\SHod_2(X,\Q)_0.$ Now consider the strong generic Hodge cycle induced by $\delta^1=\sum_{j=0}^{4}n_j\delta_1*\delta_{j}^{-1}$ with $n_0-n_3+n_2\neq0$ or $n_1-n_2+n_4\neq0$. This guarantees that

\begin{equation*}
\sum_{j=0}^{4}n_{j}\zeta_{6}^{5j}\neq 0,
\end{equation*}
since variety $X$ is $2$-dimensional, we have 

\begin{equation*}
    \frac{1}{2\pi i}\int_{\delta^1}res\left(\frac{\omega_{\beta}}{f^2}\right) \in \overline{\Q(\lambda)}.
\end{equation*}
Therefore by Proposition \ref{prop:alg22206} and equation \eqref{eq:alg2301} we conclude that 

\begin{equation*}
\begin{split}
\left [6F\left(\frac{4}{3},-\frac{4}{3},\frac{8}{3};1-\lambda\right)(\lambda^2-\lambda+1)-\frac{2}{3}
F\left(\frac{4}{3},-\frac{1}{3},\frac{8}{3};1-\lambda\right)\left(\lambda+1 \right)(5\lambda^2-8\lambda+5)\right]\in\overline{\Q(\lambda)}.
\end{split}
\end{equation*}
It remains to prove that $F\left(\frac{4}{3},-\frac{4}{3},\frac{8}{3};1-\lambda\right)$, $F\left(\frac{4}{3},-\frac{1}{3},\frac{8}{3};1-\lambda\right)\notin\overline{\Q(\lambda})$. For this, note that the above hypergeometric functions are reducible and their angular parameters $\lambda=1-c$, $\mu=c-a-b$, $\nu=a-b$ don't satisfy that exactly two of the numbers $\lambda+\mu+\nu,\; -\lambda+\mu+\nu,\; \lambda-\mu+\nu,\; \lambda+\mu-\nu$ are odd integers. This implies that $F\left(\frac{4}{3},-\frac{4}{3},\frac{8}{3};1-\lambda\right)$, $F\left(\frac{4}{3},-\frac{1}{3},\frac{8}{3};1-\lambda\right)\notin\overline{\Q(\lambda}),$ see \cite[Theorem 12.17, item (c)]{zolkadek2006monodromy}. 
To obtain the other expressions the reasoning is the same but using the differential forms $\frac{\omega_\beta}{f^2}$ with $\beta=\left(0,0,\beta_3\right)$, and $\beta_3=0,1,2$. 
\end{proof}

\begin{remark}
Let $X$ be a desingularization of the weighted hypersurface $D$ given by the quasi-homogenization $F$ of $f=g(x)+P(y)$, where $g(x)=x_1^2+x_2^d$, $6|d$ and  $P(y)=y(1-y)(\lambda-y)$. If for each $\beta_2\in\left\{\frac{5d}{6}-1,\frac{d}{6}-1\right\}$,  there is a strong generic Hodge cycle $\delta^1=\sum_{j=0}^{d-2}n_j\delta_1*\delta_{j}^{-1}$ on $X$ such that

\begin{equation*}
\sum_{j=0}^{d-2}n_{j}\zeta_{d}^{j(\beta_2+1)}\neq 0,
\end{equation*}
then we obtain exactly the same result of Proposition \ref{prop:algtra0906} using the differential forms $\frac{\omega_\beta}{f^2}$ with 
$\beta=\left(0,\frac{5d}{6}-1,0\right)$ and
$\beta=\left(0,\frac{d}{6}-1,\beta_3\right)$, $\beta_3=0,1,2$. 
\end{remark}

Using the same idea with the same $\beta$'s in the same variety of the proof of Proposition \ref{prop:algtra0906} and with the cycle $\delta^0=\sum_{j=0}^{4}n_j\delta_0*\delta_{j}^{-1}$ such that $n_0-n_3+n_2\neq0$ or $n_1-n_2+n_4\neq0$, we have

\begin{prop}
\label{prop:algtra11407}
The following expressions are in $\overline{\Q(\lambda)}:$

\begin{equation}
\label{eq:06051}
    0\neq \frac{3}{5}F\left(\frac{7}{3},-\frac{1}{3},\frac{11}{3};\frac{1}{\lambda}\right)(\lambda^2-\lambda+1)-\frac{2}{15}
F\left(\frac{4}{3},-\frac{1}{3},\frac{8}{3};\frac{1}{\lambda}\right)\left(\lambda+1 \right)(5\lambda^2-8\lambda+5),
\end{equation}

\begin{equation}
\label{eq:06052}
        0\neq F\left(\frac{5}{3},\frac{1}{3},\frac{7}{3};\frac{1}{\lambda}\right)-\frac{2}{3}
    F\left(\frac{2}{3},\frac{1}{3},\frac{4}{3};\frac{1}{\lambda}\right)\left(\lambda+1 \right),
\end{equation}

\begin{equation}
\label{eq:06053}
\begin{split}
0\neq &\frac{10}{7}F\left(\frac{8}{3},\frac{1}{3},\frac{10}{3};\frac{1}{\lambda}\right)(\lambda^2-\lambda+1)-
\frac{1}{6}F\left(\frac{5}{3},\frac{1}{3},\frac{7}{3};\frac{1}{\lambda}\right)\left(\lambda+1 \right)(8\lambda^2-11\lambda+8)+\\
&F\left(\frac{2}{3},\frac{1}{3},\frac{4}{3};\frac{1}{\lambda}\right)\lambda(1-\lambda)^2,
\end{split}
\end{equation}

\begin{equation}
\label{eq:06054}
\begin{split}
0\neq &\frac{24}{7}F\left(\frac{11}{3},\frac{1}{3},\frac{13}{3};\frac{1}{\lambda}\right)(\lambda^2-\lambda+1)-
\frac{10}{21}F\left(\frac{8}{3},\frac{1}{3},\frac{10}{3};\frac{1}{\lambda}\right)\left(\lambda+1 \right)(7\lambda^2-10\lambda+7)+\\
&2F\left(\frac{5}{3},\frac{1}{3},\frac{7}{3};\frac{1}{\lambda}\right)\lambda(1-\lambda)^2,
\end{split}
\end{equation}
but each hypergeometric function in the expressions above is not algebraic over $\Q(\lambda)$.
\end{prop}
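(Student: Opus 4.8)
The plan is to run the proof of Proposition \ref{prop:algtra0906} essentially verbatim, with the single change that the cycle $\delta^1$ built from $\delta_1=[\lambda]-[1]$ is replaced by the cycle $\delta^0$ built from $\delta_0=[1]-[0]$. The effect of this substitution lies entirely in the period computation: by Proposition \ref{prop:compint2006} (equation \eqref{eq:cal1710}) and by the first formula of Proposition \ref{prop:alg22206}, the hypergeometric functions attached to $\delta_0*\delta_\alpha^{-1}$ carry the argument $\tfrac1\lambda$ rather than $1-\lambda$, which is exactly the shape of the relations \eqref{eq:06051}--\eqref{eq:06054}.

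Concretely, I would take $X$ to be a desingularization of the weighted hypersurface attached to the quasi-homogenization of $f=x_1^2+x_2^6+y(1-y)(\lambda-y)$, which is a surface. Here $A_{\beta'}=\tfrac{\beta_1+1}{2}+\tfrac{\beta_2+1}{6}\ge\tfrac23=\tfrac n2-\tfrac1m$ for every admissible $\beta'$, so the space $\mathcal A$ of \eqref{eq:defA} is cut out by no linear condition and $\mathcal A=\Q^{2\times5}$; hence every class in $H_2(U,\Q)$ is a strong generic Hodge cycle, and by the Lefschetz $(1,1)$ theorem it is algebraic. I would then fix $\delta^0=\sum_{j=0}^4 n_j\,\delta_0*\delta_j^{-1}$ with $n_0-n_3+n_2\ne0$ or $n_1-n_2+n_4\ne0$, which is precisely the condition making $\sum_{j=0}^4 n_j\zeta_{6}^{j(\beta_2+1)}\ne0$ for the values of $\beta_2$ occurring below.

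For $\beta=(0,4,0)$ and for $\beta=(0,0,\beta_3)$ with $\beta_3=0,1,2$ one checks that $A_\beta<2$, so $\tfrac{\omega_\beta}{f^2}$ has no pole at infinity (Proposition \ref{prop:pole1102}) and is a good form, whereas $\tfrac{\omega_\beta}{f}$ is not a good form, since already one pole-increment step produces a term with $A=2$. Because $X$ is a surface, Proposition \ref{prop:Deligne0109} then yields $\tfrac1{2\pi i}\int_{\delta^0}\mathrm{res}\!\big(\tfrac{\omega_\beta}{f^2}\big)\in\overline{\Q(\lambda)}$. Evaluating this integral with the first formula of Proposition \ref{prop:alg22206} and absorbing the nonzero algebraic factor supplied by \eqref{eq:alg2301}, one reads off the four combinations: for $\beta=(0,4,0)$ the $\beta_{n+1}c_\lambda$ term drops out (since $\beta_{n+1}=0$), leaving the two-term relation \eqref{eq:06051}, while $\beta_3=0,1,2$ give \eqref{eq:06052}, \eqref{eq:06053} and \eqref{eq:06054}; the genericity condition on $\delta^0$ forces each combination to be nonzero.

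Finally, the non-algebraicity of each individual $F(a,b,c;\tfrac1\lambda)$ over $\Q(\lambda)$ is settled exactly as in the proof of Proposition \ref{prop:algtra0906}: each such $F$ is reducible, because one of $a$, $b$, $c-a$, $c-b$ is an integer, and one verifies that it fails the criterion of \cite[Theorem 12.17, item (c)]{zolkadek2006monodromy} for algebraicity (the number of odd integers among the four sums attached to its angular parameters is never $2$). I expect the only genuine work — and the only place an error could slip in — to be the bookkeeping: matching the Pochhammer prefactors of Proposition \ref{prop:alg22206}, evaluated at $A_{\beta'}=\tfrac43$ for the first relation and at $A_{\beta'}=\tfrac23$ for the other three, against the explicit rational coefficients displayed in \eqref{eq:06051}--\eqref{eq:06054}, and re-running the angular-parameter check for each of the contiguous triples that occur. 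There is no new idea beyond the proof of Proposition \ref{prop:algtra0906}.
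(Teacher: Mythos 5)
Your proposal is correct and follows exactly the route the paper itself takes: its proof of this proposition is literally ``use the same $\beta$'s and the same variety as in Proposition \ref{prop:algtra0906}, but with the cycle $\delta^0$ in place of $\delta^1$,'' which forces the first (argument $\tfrac1\lambda$) formula of Proposition \ref{prop:alg22206} and otherwise repeats the Lefschetz $(1,1)$ / Deligne argument and the angular-parameter check verbatim. Your bookkeeping (e.g.\ the Pochhammer ratios $\tfrac{(1/3)_2}{(5/3)_2}=\tfrac1{10}$ for $A_{\beta'}=\tfrac43$ and the factorization $-\tfrac13 e_\lambda+b_\lambda=-\tfrac23(\lambda+1)(5\lambda^2-8\lambda+5)$) reproduces the displayed coefficients correctly.
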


\begin{remark}
\label{rem:1610}
The algebraic functions of the expressions in Propositions \ref{prop:algtra0906} and \ref{prop:algtra11407} can be found using Gauss' relations. Using the relation

\begin{equation}
\label{eq:1610}
    (c-b)F(a,b-1,c;z)+(2b-c-bz+az)F(a,b,c;z)+b(z-1)F(a,b+1,c;z)=0,
\end{equation}
with $a=\frac{2}{3}$, $b=\frac{1}{3}$ and $c=\frac{4}{3}$ we obtain that equation \eqref{eq:algHF18082} is equal to $\frac{2}{3}\lambda^{\frac{1}{3}}$. Using the latter together with equation \eqref{eq:1610} where $a=\frac{2}{3}$, $b=\frac{-2}{3}$ and $c=\frac{4}{3}$ we find that equation \eqref{eq:algHF18083} is equal to $\frac{1}{3}\lambda^{\frac{4}{3}}(\lambda+1).$ Similarly, we can see that equation \eqref{eq:algHF1808} is equal to $\frac{10}{3}\lambda^{\frac{5}{3}}$ and equation \eqref{eq:algHF18084} is equal to $\frac{2}{3}\lambda^{\frac{7}{3}}$. Now, with these same ideas and using the relation  

\begin{equation*}
\begin{split}
    \frac{a(b-c)}{c}zF\left(a+1,b,c+1;z \right) +\left((a-b)z+(c-1)\right) F\left(a,b,c;z\right)&\\
    -(c-1)F\left(a-1,b,c-1;z \right)&=0,
    \end{split}
\end{equation*}
we obtain that equation \eqref{eq:06051} is equal to $-\frac{2}{3}(\lambda-1)^{\frac{5}{3}}\lambda^{\frac{4}{3}},$
equation \eqref{eq:06052} is equal to $-\frac{2}{3}\lambda^{\frac{2}{3}}(\lambda-1)^{\frac{1}{3}},$ equation \eqref{eq:06053} is equal to $-\frac{1}{3}(z-1)^{\frac{1}{3}}z^{\frac{5}{3}}(z+1)$ and equation \eqref{eq:06054} is equal to $-\frac{4}{3}(z-1)^{\frac{1}{3}}z^{\frac{8}{3}}.$ 

\end{remark}

\begin{remark} 
The differential forms used in Propositions \ref{prop:algtra0906} and \ref{prop:algtra11407}  are all forms such that $\frac{\omega_\beta}{f^2}$ is a good form, $\frac{\omega_\beta}{f}$ is not a good form, with $A_\beta<2$ and $A_{\beta'}\notin\N.$ We would like to get more algebraic expressions of hypergeometric functions such that the hypergeometric functions are not algebraic. One possible path would be to explore the integrals of good forms $\frac{\omega_\beta}{f^2}$ with $\frac{\omega_\beta}{f}$ is not good form, $A_\beta>2$ and $A_{\beta'}\notin\N.$ The following proposition tells us that such a path is not possible. 
\end{remark}

\begin{prop}
\label{prop:goodform2909}
Consider $f=g(x)+P(y)$, where $g(x)=x_1^{m_1}+\dots+x_{n}^{m_n}$  and  $P(y)=y(1-y)(\lambda-y)$. Suppose that $\frac{\omega_\beta}{f^k}$ is a good form with $A_\beta>k$, then $\frac{\omega_\beta}{f^{k-1}}$ is a good form.
\end{prop}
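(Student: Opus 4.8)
The plan is to reduce the statement to a clean arithmetic dichotomy: for a form $\frac{\omega_\beta}{f^{k}}$ with $A_\beta>k$ (and $A_\beta\notin\N$), I will show that being a good form is equivalent to $3A_\beta\notin\Z$. Since $3A_\beta$ is unchanged when we replace $f^{k}$ by $f^{k-1}$, and since the auxiliary hypotheses $A_\beta>k-1$ and $A_\beta\notin\N$ are inherited from $A_\beta>k$ and from $\frac{\omega_\beta}{f^{k}}$ being a good form (so it falls under the second clause of Definition \ref{def:goodform}), the proposition follows at once from this equivalence.

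To set up, I would first record the shape of the pole increment in this situation. Because $P(y)=y(1-y)(\lambda-y)$ has no constant term, Remark \ref{rem:incrpole2508} tells us that one increment step sends a term $\frac{\omega_\gamma}{f^{j}}$ with $A_\gamma>j$ to a combination of $\frac{\omega_{\gamma+(0',1)}}{f^{j+1}}$ and $\frac{\omega_{\gamma+(0',2)}}{f^{j+1}}$, with coefficients proportional to $-2\lambda$ and $1+\lambda$ and to the nonzero scalar $\frac{A_\gamma}{3(A_\gamma-j)}$; treating $\lambda$ as a transcendental parameter (as everywhere in the paper, where algebraicity is measured over $\Q(\lambda)$) all of these are nonzero. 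Consequently every term produced during the entire process starting from $\frac{\omega_\beta}{f^{k}}$ or from $\frac{\omega_\beta}{f^{k-1}}$ has the form $\frac{\omega_{\beta+(0',s)}}{f^{j}}$ with $A_{\beta+(0',s)}=A_\beta+s/3$, and the process terminates because $A_\gamma-j$ decreases by at least $1/3$ at each step.

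The equivalence then splits into two halves. If $3A_\beta\notin\Z$, then $3A_{\beta+(0',s)}=3A_\beta+s\notin\Z$, hence $A_{\beta+(0',s)}\notin\Z$, for every $s$; so the first time the process reaches a term with $A_{\beta+(0',s)}\le j$, the inequality is in fact strict, and the form is good. Applied to $\frac{\omega_\beta}{f^{k-1}}$, this is exactly the conclusion we want, once $3A_\beta\notin\Z$ is known. Conversely, suppose $N:=3(A_\beta-k)\in\Z$, so $N\ge 1$ because $A_\beta>k$. Follow the branch of the process for $\frac{\omega_\beta}{f^{k}}$ that always chooses the shift $\gamma\mapsto\gamma+(0',2)$: after $u$ steps it reaches $\frac{\omega_{\beta+(0',2u)}}{f^{k+u}}$, and $A_{\beta+(0',2u)}-(k+u)=(N-u)/3$. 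Thus for $u=0,\dots,N-1$ this is positive (the process keeps going), while for $u=N$ it is $0$, i.e. $A_{\beta+(0',2N)}=k+N$. So $\frac{\omega_{\beta+(0',2N)}}{f^{k+N}}$ is a genuine leaf of the process; moreover it is reached by a unique path in the branching tree, since any $N$-step path whose total last-coordinate shift is $2N$ must use $+2$ at every step, so it occurs in the expansion \eqref{eq:increase2001} with a nonzero coefficient and nothing to cancel it. This contradicts $\frac{\omega_\beta}{f^{k}}$ being a good form; hence $3A_\beta\notin\Z$, and by the previous half $\frac{\omega_\beta}{f^{k-1}}$ is a good form.

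The delicate point is the non-cancellation of the offending leaf in the converse half; I have arranged to use the extremal leaf obtained by shifting the last index by $2$ at every step precisely because such a leaf has a unique preimage path and therefore a manifestly nonzero coefficient, which avoids any bookkeeping over the whole tree. The only other thing to be careful about is that all of this treats $\lambda$ as generic: for the exceptional values of $\lambda$ with $1+\lambda=0$ or $\lambda=0$ the increment degenerates and the notion of good form would need a separate reading, but this does not affect the statement, which is intended (as in the rest of the paper) over $\Q(\lambda)$.
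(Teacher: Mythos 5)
Your proposal is correct, and it takes a genuinely different route from the paper. The paper's proof runs the pole-increment algorithm on $\frac{\omega_\beta}{f^{k-1}}$, observes that its branching tree is the tree of $\frac{\omega_\beta}{f^{k}}$ with every pole order lowered by one, and then argues by a somewhat delicate case analysis that an equality $A_{\beta+(0',s)}=j$ arising in the new tree would force an equality (hence non-goodness) already in the old tree. You instead isolate the invariant that decides everything: every node of either tree has $A$-value in $A_\beta+\tfrac{1}{3}\Z$, so an integer value can be hit somewhere iff $3A_\beta\in\Z$, and this condition does not see $k$. Your converse half --- exhibiting the all-$(+2)$ extremal branch, which terminates exactly at the equality $A_{\beta+(0',2N)}=k+N$ with $N=3(A_\beta-k)$ and is reached by a unique path, hence with coefficient a nonzero multiple of $(1+\lambda)^N$ --- is exactly the non-cancellation point the paper glosses over, and in fact one can note that no leaf ever cancels, since all paths to a fixed leaf use the same numbers of $+1$ and $+2$ shifts and contribute the same monomial $(-2\lambda)^p(1+\lambda)^q$ times positive reals. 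Your explicit criterion (good iff $3A_\beta\notin\Z$, for $A_\beta>k$) is stronger and cleaner than what the paper extracts, and makes the $k$-independence transparent; both arguments share the same implicit convention that $\lambda$ is generic (coefficients taken in $\C[\lambda]$, so that $1+\lambda$ and $-2\lambda$ are nonzero), which you, unlike the paper, flag explicitly.
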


\begin{proof}
Since $\frac{\omega_\beta}{f^k}$ is a good form, we can write

\begin{equation}
\label{equa:good1306}
\frac{\omega_\beta}{f^k}=\sum C_{k_j}\frac{\omega_{\beta+(0',k_j)}}{f^j},   
\end{equation}
with $C_{k_j}\in\C[\lambda]$ and  $A_{\beta+(0',k_j)}=A_\beta+\frac{k_j}{3}<j$ such that $j-1<A_\beta+\frac{k_j-1}{3}$ or $j-1<A_\beta+\frac{k_j-2}{3}$ (see Remark \ref{rem:incrpole2508}). Now let us apply the process of pole order increment to the differential form $\frac{\omega_\beta}{f^{k-1}}$. We obtain 

\begin{equation*}
\frac{\omega_\beta}{f^{k-1}}=\sum \hat{C}_{k_j}\frac{\omega_{\beta+(0',k_j)}}{f^{j-1}},   
\end{equation*}
where $j-1<A_{\beta+(0',k_j)}<j$. This means that we need to increment the pole order again. 
Let us see what happens 
when $j-1<A_\beta+\frac{k_j-1}{3}$. We have

$$\left[\frac{\omega_{\beta+(0',k_j)}}{f^{j-1}}\right]=\frac{A_\beta+\frac{k_j}{3}}{3(A_\beta+\frac{k_j}{3}-j)}\left[\frac{-a\omega_{\beta+(0',k_j+2)}-2b\omega_{\beta+(0',k_j+1)}}{f^j} \right].$$
We must analyze each term of the previous expression. Let us see the most problematic term: $\frac{\omega_{\beta+(0',k_j+1)}}{f^j}$. Observe that $ j-\frac{1}{3}<A_{\beta+(0',k_j+1)}<j+\frac{1}{3}$. If $A_{\beta+(0',k_j+1)}=j$, then $A_{\beta+(0',k_j-2)}=j-1$. This implies that $\frac{\omega_\beta}{f^k}$ is not good form or the differential form $\frac{\omega_{\beta+(0',k_j-1)}}{f^{j-1}}$ appears  one step before obtaining equation (\ref{equa:good1306}). If we have $\frac{\omega_{\beta+(0',k_j-1)}}{f^{j-1}}$ as $A_{\beta+(0',k_j-1)}=j-\frac{2}{3}$ we need to apply the process of pole order increment again but by applying it we get that $\frac{\omega_\beta}{f^k}$ is not a good form because appears the differential form $\frac{\omega_{\beta+(0',k+1)}}{f^j}$ and $A_{\beta+(0',k_j+1)}=j$. In conclusion $A_{\beta+(0',k_j+1)}\neq j$. If necessary we increment the pole order again. The other cases are similar, leading us to conclude that $\frac{\omega_\beta}{f^{k-1}}$ is a good form. \end{proof}

\begin{remark}
Proposition \ref{prop:goodform2909} tells us that in Proposition \ref{prop:algform1807}, the condition that $\frac{\omega_\beta}{f}$ is a good form is not necessary for $k\leq \frac{n}{2}-1$.
\end{remark}

To obtain more algebraic expressions of hypergeometric functions such that the hypergeometric functions are not algebraic we integrate a strong generic Hodge cycle in a good form $\frac{\omega_\beta}{f^k}$ such that $\frac{\omega_\beta}{f}$ is not good form, $A_\beta<k$ and $A_{\beta'}\notin\N$, where $f=g(x)+P(y)$, $g(x)=x_1^2+x_2^d$ and  $P(y)=y(1-y)(\lambda-y).$ Indeed in the proof of Proposition \ref{prop:algform1807} we saw that up to multiplication by an element of $\overline{\Q(\lambda)}$ 

\begin{equation}
\label{eq:algn10209}
\begin{split}
\frac{1}{2\pi i}\int_{\delta^{0}} res\left(\frac{\omega_\beta}{f^k}\right)
=\frac{B\left(\frac{1}{2},\frac{\beta_2+1}{d} \right)B\left(
A_{\beta'},A_{\beta'}\right)}{2\pi i}\sum_{j}C_j(\lambda)F\left(a_j,b_j,c_j;\frac{1}{\lambda} \right),
\end{split}
\end{equation}
with $F\left(a_j,b_j,c_j;\frac{1}{\lambda} \right)$ contiguous to $F\left(a,b,c;\frac{1}{\lambda} \right)$, where $a=A_{\beta'}+\beta_{3},$       $b=1-A_{\beta'},$         $c=2A_{\beta'}+\beta_{3}$. Therefore if $\delta^0=\sum_{j=0}^{d-2}n_j\delta_0*\delta_{j}^{-1}$ is a generic Hodge cycle we have that \eqref{eq:algn10209} belongs to $\overline{\Q(\lambda)}$. Furthermore, if  

\begin{equation*}
\sum_{j=0}^{d-2}n_{j}\zeta_{d}^{j(\beta_2+1)}\neq 0,
\end{equation*}
using equation \eqref{eq:alg2301} we conclude that $\sum_{j}C_j(\lambda)F\left(a_j,b_j,c_j;\frac{1}{\lambda} \right)\in\overline{\Q(\lambda)}.$ The fact that $\frac{\omega_{\beta}}{f}$ is not a good form implies that $A_{\beta'}=\frac{N}{3}$ for some $N\in\N$, and therefore $F\left(a,b,c;\frac{1}{\lambda} \right)$ is reducible. Also note that

$$a_j=a+k_j,\;\; b_j=b+l_j,\;\; c_j=c+d_j,$$
with $k_j,l_j,d_j\in\Z.$ Consider $\lambda_j=1-c_j$, $\mu_j=c_j-a_j-b_j$, $\nu_j=a_j-b_j$. A straightforward computation allows us to verify that $\lambda_j,\mu_j,\nu_j$ do not satisfy the hypothesis of \cite[Theorem 12.17, item (c)]{zolkadek2006monodromy} and therefore $F\left(a_j,b_j,c_j;\frac{1}{\lambda} \right)\notin\overline{\Q(\lambda)}.$ The same is valid for the cycle $\delta^{1}.$

\subsection{Computational verification}
\label{sec:compu2408}

We can check the validity of Propositions \ref{prop:algtra0906} and \ref{prop:algtra11407} using numerical computations by evaluating $\lambda$ at algebraic numbers. Call $G(\lambda)$ the function defined by equation \eqref{eq:algHF1808}. We use the package\\
\texttt{with(IntegerRelations)} in \textbf{Maple}. The command

\begin{verbatim}
    v := expand([seq(evalf[k](G(lambda)^j), j = 0 .. m)]);
\end{verbatim}
computes powers of $G(\lambda)$ from $0$ to $m$ with $k$ digits of precision. With the following command

\begin{verbatim}
    u := LinearDependency(v, method = LLL);
\end{verbatim}
we find a $\Z$-linear relation between $1,G(\lambda),G(\lambda)^2,\dots,G(\lambda)^m$. The polynomial that satisfies $G(\lambda)$ can be displayed with the command 

\begin{verbatim}
    P := add(u[j]*z^(j-1), j = 1 .. m+1);
\end{verbatim}
This computation is heuristic, since we only have approximations of $G(\lambda)$. As an example of the above take $\lambda=i$ with $i^2=-1$, $m=400$ and $400$ digits of precision. We have the polynomial 
\begin{equation*}
     81z^4-900z^2+10000.
\end{equation*}
These computations suggest that $G(i)$ is an algebraic number. This is, of course, one consequence of Proposition \ref{prop:algtra0906}. In fact, by Remark \ref{rem:1610} we know that $G(\lambda)=\frac{10}{3}\lambda^{\frac{5}{3}}$. Observe that $G(i)$ is root of $81z^4-900z^2+10000.$

With these same commands we can verify what was proven by Reiter and Movasati in \cite{movasati2006hypergeometric} mentioned in the introduction of this paper.


\bibliographystyle{alpha}

\bibliography{ref}



\end{document}